\newcommand{\@giventhatstar}[2]{\left(#1\,\middle|\,#2\right)}
\newcommand{\@giventhatnostar}[3][]{#1(#2\,#1|\,#3#1)}
\newcommand{\giventhat}{\@ifstar\@giventhatstar\@giventhatnostar}
\algnewcommand{\Inputs}[1]{%
  \State \textbf{Inputs:}
  \Statex \hspace*{\algorithmicindent}\parbox[t]{.8\linewidth}{\raggedright #1}
}
\algnewcommand{\Initialize}[1]{%
  \State \textbf{Initialize:}
  \Statex \hspace*{\algorithmicindent}\parbox[t]{.8\linewidth}{\raggedright #1}
}
\renewcommand{\comment}[1]{}
\newcommand{\pp}{\mathbb{P}}
\newcommand{\dd}{\mathrm{d}}
\newcommand{\ee}{\mathbb{E}}
\newcommand{\rbdd}{\overline{r}}
\newcommand{\reals}{\mathbb{R}}
\newcommand{\mbb}{\mathbb}
\newcommand{\lt}{\left}
\newcommand{\rt}{\right}
\newcommand{\wt}{\widetilde}
\newcommand{\wh}{\widehat}
\newcommand{\pois}{\mathsf{Poi}}
\newcommand{\bern}{\mathsf{Bern}}
\newcommand{\unif}{\mathsf{Unif}}
\newcommand{\pno}{\mathcal{P}(\lambda)}
\newcommand{\lfs}{\mathbf{x}}
\newcommand{\state}{\mathbf{X}}
\newcommand{\argmin}{\operatornamewithlimits{argmin}}
\newcommand{\pnon}{\mathcal{P}_n(\lambda)}
\newcommand{\comDCone}{\mathcal{T}_{\mathsf{DC1}}}
\newcommand{\comAR}{\mathcal{T}_{\mathsf{NAR}}}
\newtheorem{theorem}{Theorem}
\newtheorem{proposition}{Proposition}
\newtheorem{remark}{Remark}
\newtheorem{corollary}{Corollary}
\newtheorem{lemma}{Lemma}
\title{Rejection and Importance Sampling based Perfect Simulation for Gibbs hard-sphere models}
\author{
  Moka, S. B.\\ {\normalfont University of Queensland}\\
  \and
  Juneja, S.\\{\normalfont TIFR, Mumbai}\\
  \and
  Mandjes, M. R. H.\\ {\normalfont University of Amsterdam }
}
\date{}
\begin{document}
\maketitle

\begin{abstract}
Coupling from the past (CFTP) methods have been used to generate perfect samples from finite Gibbs hard-sphere models, an important class of spatial point processes, which is a set of spheres with the centers on a bounded region that are distributed as a homogeneous Poisson point process (PPP) conditioned that spheres do not overlap with each other. We propose an alternative importance sampling based rejection methodology  for the perfect sampling of these models.  We analyze the asymptotic expected running time complexity of the proposed method when the intensity of the reference PPP increases to infinity while the (expected) sphere radius decreases to zero at varying rates. We further compare the performance of the proposed method analytically and numerically with a naive rejection algorithm and popular dominated CFTP algorithms. Our analysis relies upon identifying large deviations decay rates of the non-overlapping probability of spheres whose centers are distributed as a homogeneous PPP.
\end{abstract}

{\bf Keywords:} Exact Simulation, Dominated Coupling From The Past, Large Deviations, Non-overlapping Probability.

\section{Introduction}
Perfect sampling, that is, generating unbiased samples from a target distribution (also referred to as perfect simulation or exact sampling), is an important and exciting area of research in stochastic simulation. In this paper, {we introduce and investigate a novel methodology for generating perfect samples of finite Gibbs hard-sphere models}, which are an important family of Gibbs point processes. Roughly, a Gibbs hard-sphere model can be described as a set of spheres such that their centers constitute a Poisson point process on a bounded Euclidean space conditioned that
no two spheres overlap with each other.
The proposed methodology combines {\it importance sampling} (IS) and {\it acceptance-rejection} (AR) techniques to achieve  substantial performance improvement in certain important regimes of interest.
 In statistical physics, there is a large body of work related to the Gibbs hard-sphere models; see, e.g., \cite{Pathria72, MM40, Adams74, AS13, LM66, SRJ53, JM13, BT16}.
These models are important also in modelling adsorption of latexes or proteins on solid surfaces \cite[and references therein]{TST90, SVS00}. 
For the analysis of wireless communication networks, it is common to use the Gibbs hard-sphere models to model base-stations in a cellular network because no two base-stations are to be normally placed closer than a certain distance from each other \cite{SKM87, Haenggi12}.
Our results can be used to assess the stationary behaviour of Code Division Multiple Access (CDMA) wireless networks.\\

{\em Literature Review:}
{The existing literature offers several perfect sampling methods for Gibbs hard-sphere models. Among these,  
the dominated coupling from the past (dominated CFTP) methods are most prominent and they are based on the seminal paper by Propp \& Wilson \cite{PW96}}; see \cite{KM00, KW98, MH12, Kendall15}. 
Another well-known perfect sampling method for the Gibbs hard-sphere models is called the backward-forward algorithm (BFA) by Ferrari et al. \cite{FFG02}; also see \cite{MH16, GNL00}.
{To see some of the applications of perfect sampling for these models, refer to \cite{BKM08, BM06, MPB06}.
For other related literature on perfect sampling for spatial point processes, refer to \cite{JMR10, HVM99}.}
As mentioned in \cite{GNL00}, all the existing methods are, in some sense, complementary to each other.
They take advantage of an important property that the distribution of a Gibbs hard-sphere model can be realized as an invariant measure of a spatial birth-and-death process,
call it the {\em interaction process}. For example, the main ingredient of the dominated CFTP method is to construct a birth-and-death process backward in time starting from its steady-state at time zero such that it dominates the interaction process, and then use {\it thinning} on the dominating process to construct coupled upper and lower bound processes forward in time such that the coalescence of these two bounding processes assures a perfect sample from the target measure, which is the invariant measure of the interaction process. The BFA is based on the construction of the {\it clan of ancestors} that uses thinning of a dominating process and extends the applicability to infinite-volume measures. 
A crucial drawback of the naive AR and the dominated CFTP methods is that they are guaranteed to be efficient only if the intensity of the Gibbs hard-sphere model is close to the intensity of the reference Poisson point process; see \cite{MH16} for details. In addition, most of the dominated CFTP methods suffer from the so-called {\it impatient-user bias} (a bias that is induced when a user aborts long runs of the algorithm); see \cite{Fill98}, \cite{FMMR00} and \cite{Thon99}.\\

{\em Our Contributions:}
Acceptance-rejection methods are free of the impatient-user bias and involve neither thinning nor coupling (which are crucial for the other methods).
Despite being an obvious alternative to the existing methods, to the best of our knowledge, in the context of Gibbs point processes,
the use of AR methods is still largely unexplored (except brief discussions, e.g., in \cite{FJMM15} and \cite{MH16}).
AR methods for Gibbs hard-sphere models are amenable to further algorithmic enhancements that may substantially decrease the expected running time of the algorithm.
The proposed methodology provides one such enhancement. To highlight the significance of the proposed methodology,
we compare its running time complexity with that of both the naive AR and the dominated CFTP methods.
This effectiveness analysis is based on our large deviations analysis of the {\em non-overlapping probability}. A brief summary of our results is as follows.

\begin{itemize}
 \item Our first key contribution is that we conduct a large deviations analysis of the probability of spheres not overlapping with each other when their centers constitute a homogeneous Poisson point process (PPP). More specifically, we consider a homogeneous {\em marked} PPP on $[0,1]^d$ with intensity $\lambda$ where the points are the center of spheres with independently and identically distributed (iid) radii as marks which are independent of the centers and identical in distribution to  $R/\lambda^\eta$ for a positive bounded random variable  $R$ and a constant ${\eta > 0}$. We establish large deviations of the probability of spheres do not overlap with each other,  as ${\lambda \nearrow \infty}$. This analysis is useful in the study of the asymptotic behavior of the expected running time complexities of the proposed and the existing perfect sampling methods for the Gibbs hard-sphere models. This analysis may also be of independent interest.
       
 \item {Our second key contribution is that we propose a novel IS based AR algorithm for generating perfect samples of the Gibbs hard-sphere model obtained by considering the homogeneous marked PPP conditioned on no overlap of the spheres. This is achieved by partitioning the underlying configuration space and arriving at an appropriate change of measure on each partition. Applicability of the proposed algorithm is illustrated  in two scenarios.
       In the first scenario, all the spheres are assumed to be of a fixed size (i.e., $R$ is a fixed positive constant). We develop a grid based IS technique under which spheres are generated sequentially such that the chance of spheres overlapping is small and the corresponding likelihood ratio has a better deterministic upper bound that improves the acceptance probability in each iteration of the algorithm.
       In the second scenario, we consider the general case where spheres have iid radii. In this scenario, we divide the underlying configuration space into two sets. On one set, the sum of the volumes of spheres is bounded from below and on the other set, the volume sum takes small values so that the set consists of  {\em rare} configurations only. For the first set, we develop a grid based IS method that is similar to the one stated above, and for the second set, we use an {\it exponential twisting} on the sphere volume distribution. In both the scenarios, the new method provably substantially improves the performance of the algorithm compared to the naive AR method.}
       
 \item We analytically and numerically compare the performance of the proposed IS based AR method with  that of some of the dominated CFTP methods.
         The numerical results support our analytical conclusions that the proposed method is substantially efficient compared to the existing methods over the {\em high density regime} where $\eta d \leq 1$ and $\lambda$ is large.
\end{itemize}

{\em Organization:} 
Section~\ref{sec:prel} provides a definition of the hard-sphere model. The large deviations of the non-overlapping probability is presented in Section~\ref{sec:LDR}.
In Section~\ref{sec:AR_methods}, we first review a naive AR method and analyze its expected running time complexity, and we then propose and analyze the IS based AR method. In Section~\ref{sec:dcftp}, a review of the well-known dominated CFTP methods for the hard-sphere models is given.
Section~\ref{sec:NumExp} illustrates the efficiency of the proposed methodology using numerical experiments.
Section~\ref{sec:ConAR} is a brief conclusion of the paper. All proofs are presented in Appendix~\ref{Proofs}.

\section{Preliminaries}
\label{sec:prel}

First we introduce some notation. $X \sim F$ denotes that the distribution of a random object $X$ is $F$. $\pois(\lambda)$ and $\bern(p)$ denote, respectively,
Poisson distribution with mean $\lambda > 0$ and Bernoulli distribution with success probability $p$.
The uniform distribution on $[0,1]$ is denoted by $\unif(0,1)$. For an event $A$, the indicator function $I(A)$ is equal to $1$ if $A$ occurs, otherwise it is equal to $0$.
A measure $\mu_1$ is absolutely continuous with respect to a measure $\mu_2$ on a measurable set $A$ if $\mu_1(B\cap A) = 0$ for any measurable $B$ such that
$\mu_2(B\cap A) = 0$. {For any probability measure $\mu$, $\pp_{\mu}(A)$ denotes the probability of an event $A$ under $\mu$,
and $\ee_{\mu}[\cdot]$ denotes the associated expectation. We drop the subscript $\mu$ when it is not relevant.} 
For any non-negative real valued functions $f$ and $g$, write $f(x) = O(g(x))$ if $\limsup_{x\rightarrow \infty} f(x)/g(x)~\leq~c$ for some constant $c > 0$, write $f(x) = \varOmega(g(x))$ if $g(x) = O(f(x))$, and write $f(x) = o(g(x))$ if $\limsup_{x\rightarrow \infty} f(x)/g(x) = 0 $.
Write $f(x) = \Theta(g(x))$ {if both} $f(x) = O(g(x))$ and $f(x) = \varOmega(g(x))$ are true. For any real value $x$, the largest integer $n$ such that $n \leq x$ is denoted by $\lfloor x\rfloor$ and the smallest integer $n$ such that $n \geq x$ is denoted by $\lceil x\rceil$. The set of all the non-negative integers is denoted by $\mbb{N}_0$.\\

A random finite subset $\state = \{X_1, \dots, X_N\}$ of an observation window $W \subset \reals^d$ is called a {\em Poisson point process} (PPP) with a finite intensity measure $\nu$ on $W$ if $N \sim \pois(\nu(W))$ and for every $n \in \mbb{N}_0$, conditioned on $N = n$, the points $X_1, \dots, X_n$ are iid with distribution $\nu(\dd x)/ \nu(W)$. A PPP  on $[0,1]^d$ is called $\lambda$-{\it homogeneous} PPP with intensity $\lambda > 0$ if the intensity measure $\nu(\dd x) = \lambda\, \dd x$,  {where $\dd x$ is Lebesgue measure on $W$.}
To each point $X_i$ of the $\lambda$-homogeneous PPP on $[0,1]^d$, we associate a mark  {which is a non-negative number interpreted as the radius of }a sphere centered at $X_i$.
In particular, a {\it $\lambda$-homogeneous marked  PPP} on $[0,1]^d$ is a PPP on $W = [0,1]^d \times [0, \infty)$ with the intensity measure ${\nu(\dd x \times \dd r) = \lambda \dd x \times F(\dd r)}$ where $F$ is the distribution of each radius. That is, the centers constitute a $\lambda$-homogeneous PPP on $[0,1]^d$ which is independent of the radii, and the radii are iid with distribution $F$. A realization of the marked PPP with $n$ points is denoted by $\lfs = \{(y_1, r_1), \dots, (y_n, r_n)\}$, where $r_i \geq 0$ is the radius of the sphere centered at $y_i \in [0,1]^d$. Define $\mathscr{G} = \cup_{n \in \mbb{N}_0}\mathscr{G}_n$ where
\[
\mathscr{G}_n = \big\{ \lfs = \{(y_1, r_1), \dots, (y_n, r_n)\} :  (y_i, r_i) \in [0,1]^d\times [0, \infty),\, \text{ for }\, i = 1, \dots, n \big\}.
\]

Now we  {define a }{\em Gibbs hard-sphere model}. Suppose that $\mu^0$ is the distribution of a $\lambda$-homogeneous marked PPP as defined above with $F$ being the distribution of $R/\lambda^\eta$ for a constant $\eta > 0$ and a non-negative random variable $R$. Let $\mathscr{A} \subset \mathscr{G}$ be the set of all configurations with no two spheres overlapping with each other. Then the distribution $\mu$ of  the Gibbs hard-sphere model is absolutely continuous with respect to $\mu^0$ with the Radon-Nikodym derivative given by
\begin{equation}
 \label{eqn:dist_mu_HS}
  \frac{d\mu}{d\mu^0}(\lfs) = \frac{I\lt(\lfs \in \mathscr{A}\rt)}{\pno}, \quad \lfs \in \mathscr{G},
\end{equation}
where the normalizing constant $\pno$ is the non-overlapping probability given by
\begin{align}
 \label{eqn:non-overlap}
 \pno = \pp_{\mu^0} \lt( \state \in \mathscr{A} \rt).
\end{align}

We refer to the Gibbs hard-sphere  {model as a} {\em torus-hard-sphere model} if the boundary of the underlying space $[0,1]^d$ is periodic, that is, a sphere $S(x, a)$ centered at $x \in [0,1]^d$ with radius $r$ is defined by 
$$S(x, r) = \lt\{ (y_1\, \mathsf{mod}\, 1, \dots, y_d\, \mathsf{mod}\, 1): y = (y_1, \dots, y_d)\in \reals^d, \, \,\|x- y \| < r\rt\},$$ 
where $\|\cdot \|$ is the  $d$-dimensional Euclidean norm and '$\mathsf{mod}$' denotes the modulo operation \cite{Boute92}. If the boundary is not periodic, we refer to the  {model as} a {\em Euclidean-hard-sphere model}.\\

From now onwards, the phrase `hard-sphere model' refers to either of these two models and we assume that $R$ is bounded from above by a constant $\rbdd > 0$.  In particular, if $R$ is a constant, we take $\rbdd = R$. Furthermore, we assume that $2\rbdd/\lambda^\eta < 1$ to avoid certain trivial difficulties such as the possibility of a sphere on the torus overlapping with itself.

\section{Large Deviations Results}
\label{sec:LDR}
In this section, we obtain large deviations results for the non-overlapping probability $\pno$. We use these results 
for analyzing the running time complexity of both the naive and importance sampling based acceptance-rejection methods. Hereafter, $\gamma = {\pi^{d/2}}/{\Gamma(d/2 + 1)}$, where $\Gamma(\cdot)$ is the gamma function. Note that the volume of a sphere with radius $r$ is given by $\gamma r^d$.
Define $ m_1 := {\mbb E}[(R + \wh R)^{d}]$, where $\wh R$ is independent and identical in distribution to $R$, and let
\begin{align}
\label{eqn:def_gamm_prime}
\gamma' = \begin{cases}
            \gamma, &\text{ if  $[0,1]^d$ is treated as the torus},\\
            \gamma/2^d, &\text{otherwise}.
                  \end{cases}
\end{align}
 
 \begin{theorem}
\label{lem:non-ovr-rand}
 The non-overlapping probability $\pno$ satisfies
\begin{align*}
 \lim_{\lambda \rightarrow \infty}\pno &= \begin{cases}  1, &\text{ if }\, \eta d > 2,\\
                                                     \exp\lt( -\frac{\gamma m_1}{2}\rt), &\text{ if }\, \eta d = 2,
 \end{cases} \\
\lim_{ \lambda \rightarrow \infty}\lt[\frac{1}{\lambda^{2-\eta d}}\log \pno\rt]  &= - \frac{\gamma m_1}{2}, \,\,\text{ if }\,\, 1 < \eta d < 2,\\
\text{ and }\, \, \, \,\lim_{\lambda \rightarrow \infty}\left[\frac{1}{\lambda}\log \pno  \right] &=   -1,\, \text{ if }\, 0 < \eta d < 1.
\end{align*}
When $\eta d = 1$, the limit $\delta := \lim_{\lambda \rightarrow \infty}\left[\frac{1}{\lambda}\log \pno  \right]$ exists and $-1 \leq \delta < 0$.
Furthermore, $\delta \nearrow 0$ if $\gamma m_1 \searrow 0$, and $\delta \leq  -\frac{1}{2} \lt(1 - \frac{1}{\gamma' \rbdd^d} \rt)^2$ if $R \equiv \rbdd$ and $\gamma' \rbdd^d > 1$.
In addition, for the torus-hard-sphere model,
\begin{align*}
 \lim_{ \lambda \rightarrow \infty}\lt[\pno\exp\lt( \frac{\gamma m_1}{2}\lambda^{2 - \eta d}\rt) \rt]  &= 1, \,\,\text{ if }\,\, 5/3 < \eta d < 2.
\end{align*}
\end{theorem}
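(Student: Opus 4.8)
The plan is to compute the non-overlapping probability $\pno$ by viewing the centers as a $\lambda$-homogeneous Poisson point process on $[0,1]^d$ and conditioning on the number of points $N \sim Poi(\lambda)$. Given $N = n$, the centers form a homogeneous binomial point process $\state_n$ with $n$ i.i.d.\ uniform points, and each point carries an independent mark distributed as $R$. The event $\{\state_n \in \mathscr{A}\}$ is the event that no two of the $n$ spheres overlap; two spheres centered at $x_i, x_j$ with radii $a_i/\lambda^\eta, a_j/\lambda^\eta$ overlap iff $\|x_i - x_j\| < (a_i + a_j)/\lambda^\eta$ (with the torus metric in the torus model). For the torus model this pairwise overlap probability is exactly $\gamma\, \ee[(R+\wh R)^d]/\lambda^{\eta d} = \gamma m_1/\lambda^{\eta d}$ by translation invariance, since each point has a neighborhood of volume $\gamma((a_i+a_j)/\lambda^\eta)^d$; in the Euclidean model boundary effects reduce this by a factor $\to 2^{-d}$ in the relevant asymptotic regime, which is the source of $\gamma'$.

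The first block of cases ($\eta d \geq 1$) I would handle via a second-moment / Poisson-approximation argument. Writing $W_n$ for the number of overlapping pairs among $n$ points, one has $\ee[W_n] = \binom{n}{2}\cdot \gamma m_1/\lambda^{\eta d}(1+o(1))$ along the torus model, and when $n \approx \lambda$ this is of order $\lambda^{2-\eta d}$. For $\eta d > 2$ this tends to $0$, so $\pp(W_n = 0) \to 1$; for $\eta d = 2$ it tends to the constant $\gamma m_1/2$, and a Chen--Stein / method-of-moments argument gives convergence of $W_n$ to a Poisson random variable with that mean, hence $\pp(W_n = 0) \to e^{-\gamma m_1/2}$; for $1 < \eta d < 2$, $\ee[W_n]$ grows, and one shows $\frac{1}{\lambda^{2-\eta d}}\log\pp(W_n=0) \to -\gamma m_1/2$ by sandwiching: the upper bound from a subadditivity/correlation inequality (e.g.\ an FKG-type or a direct conditioning argument showing non-overlap events are negatively associated, giving $\pp(W_n=0) \le \prod \pp(\text{pair non-overlap})$ up to lower-order corrections), and the lower bound from restricting to configurations where points are spread out (a packing argument: place points on a fine grid and perturb). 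Throughout, the randomness in $N$ around $\lambda$ contributes only lower-order fluctuations because the integrand is monotone in $n$ and concentrates.

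The case $0 < \eta d < 1$ is governed by large deviations of $N$ itself rather than by the geometry: here spheres are so large that non-overlap of $\lambda$ of them is essentially impossible, and $\pno$ is dominated by the event that $N$ is atypically small — roughly $N = O(\lambda^{\eta d})$ points, whose probability under $Poi(\lambda)$ decays like $e^{-\lambda}$. One shows $\frac{1}{\lambda}\log\pno \to -1$ by matching the $e^{-\lambda}$ upper bound (even $N$ being $O(1)$ costs $e^{-\lambda(1+o(1))}$, and for any growing but sub-$\lambda^{\eta d}$ count the non-overlap probability is still bounded below) against the trivial lower bound $\pp(N=0) = e^{-\lambda}$. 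The boundary case $\eta d = 1$ is the delicate one: both mechanisms (small $N$, and spatial spreading of $N \asymp \lambda$ points) contribute at the same exponential scale $e^{-\Theta(\lambda)}$, so $\delta$ is the value of an optimization trading off the $Poi(\lambda)$ cost of having $cn$ points against the packing/non-overlap cost for that many points; existence of the limit follows from a superadditivity argument (Fekete) on $\log \pp_{\mu^0}(\state\in\mathscr{A})$ along subsequences, and the bounds $-1 \le \delta < 0$, the monotonicity $\delta\nearrow 0$ as $\gamma m_1 \searrow 0$, and the explicit estimate $\delta \le -\tfrac12(1 - \tfrac{1}{\gamma' r^d})^2$ when $R\equiv r$ come from plugging a Poisson/Gaussian-type bound into this variational expression.

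The main obstacle I expect is the sharp constant $-\gamma m_1/2$ in the regime $1 < \eta d < 2$ (and the refined estimate $5/3 < \eta d < 2$ for the torus model, which asks for the exact prefactor $1$, not just the exponential rate): controlling the correlations between the $\binom{n}{2}$ pairwise overlap events precisely enough to show the log-probability factorizes to leading order, both from above and below. The upper bound needs a genuine negative-association or Suen-type inequality for the dependency graph of overlap events, and the lower bound needs a careful construction of a positive-density set of well-separated configurations whose contribution already achieves $\exp(-\gamma m_1 \lambda^{2-\eta d}/2)$; pushing the lower-order terms down to get the prefactor-$1$ statement in the $5/3 < \eta d < 2$ torus range is where the bulk of the technical work (and the restriction to $5/3$) will live.
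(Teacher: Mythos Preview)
Your high-level strategy is sound but takes a genuinely different route from the paper. Where you frame the problem as Poisson/Chen--Stein approximation for the overlap count $W_n$ (with Janson/Suen-type inequalities to control correlations), the paper never counts overlapping pairs at all. Instead it exploits the exact identity $\pnon = \ee_{\wt\mu}\bigl[\prod_{i=1}^n(1-B_i)\bigr]$, where $\wt\mu$ is the importance-sampling measure that places spheres sequentially in the currently non-blocked region and $B_i$ is the blocked volume seen by the $i$-th sphere. The lower bound then drops out of Jensen applied to $\log(1-x)$ together with $B_i \le \gamma\lambda^{-\eta d}\sum_{j<i}(R_j+R_i)^d$, which immediately delivers the sharp constant $\gamma m_1/2$; the upper bound uses $1-x\le e^{-x}$, a binomial control on how many spheres fail to contribute their full blocking volume (this produces a correction factor $N_{n,\lambda}$ and is where the torus/Euclidean distinction enters), and Hoeffding's $U$-statistic inequality for $\sum_{i<j}(R_i+R_j)^d$. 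The $5/3$ threshold in the sharp torus statement comes from needing $2-\eta d < (\eta d-1)/2$ so that the Poisson fluctuation of $N$ and the Hoeffding error can be made negligible simultaneously. A bonus of the paper's route is that the same measure $\wt\mu$ drives the perfect-sampling algorithm, so the large-deviations proof and the algorithm share one construction.

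Two steps in your outline would not go through as written. First, your upper bound in the regime $1<\eta d<2$ invokes ``non-overlap events are negatively associated, giving $\pp(W_n=0)\le\prod\pp(\text{pair non-overlap})$''. Negative association of the events $\bar A_{ij}$ is not established (on the torus the pairs $\bar A_{ij},\bar A_{ik}$ are merely uncorrelated, and FKG would, if anything, point toward positive association and hence the reverse inequality). The honest replacement is a Janson-type bound, and one must check its hypotheses carefully since the overlap events are not up-sets in independent Bernoulli variables; this is doable but is not the mechanism you name. Second, and more seriously, your lower bound via ``place points on a fine grid and perturb'' cannot recover the constant $\gamma m_1/2$: forcing $n\approx\lambda$ i.i.d.\ uniform points into prescribed well-separated cells costs at least $\exp(-c\lambda\log\lambda)$ from $n!/K^n$-type combinatorics, which is far below the target $\exp(-\tfrac{\gamma m_1}{2}\lambda^{2-\eta d})$ when $\eta d>1$. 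The sequential/IS identity is precisely what makes the sharp lower bound painless, and I do not see an easy substitute in the packing framework.

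Your treatment of $\eta d\ge 2$ (first moment and Chen--Stein), of $0<\eta d<1$ (dominated by the Poisson cost $\pp(N\text{ small})\sim e^{-\lambda}$), and of $\eta d=1$ (subadditivity via a cube partition for existence of the limit, then Chernoff on $N$ for the explicit upper bound on $\delta$) matches the paper's arguments in spirit and would go through.
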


An important and fundamental characteristic of a Gibbs point process is its {\em intensity}; see, for example, \cite[and references therein]{MMSWD01} and \cite{BN12}. Roughly speaking, the intensity of a Gibbs point process is the expected number of points of the process per unit volume. There is an interesting connection between the regimes considered in Theorem~\ref{lem:non-ovr-rand} and the asymptotic intensity of the torus-hard-sphere model. To see this, assume that each sphere has a fixed radius $\rbdd/\lambda^\eta$. Since the underlying space is $[0,1]^d$, the intensity $\rho(\lambda)$ of the model is exactly equal to the expected total number of points in a realization of the model. Equivalently, we may consider the fraction of the volume $\mathsf{VF}(\lambda)$ occupied by the spheres, given by 
$\mathsf{VF}(\lambda) = \rho(\lambda) \gamma \rbdd^d \lambda^{- \eta d}$.
For the torus-hard-sphere model, the volume fraction $\mathsf{VF}(\lambda)$ is bounded from above by $\rho^{\max} \gamma$, where $\rho^{\max}$ is the closest packing density defined by 
$\rho^{\max} = \lim_{n \rightarrow \infty} N_n/(n+1)^d,$
with $N_n$ being the maximal number of mutually disjoint unit radius spheres which are included in the hypercube $[-(n+1/2), (n+1/2)]^d$; see \cite{MMSWD01}. Proposition~\ref{prop:Pack_int} describes asymptotic behavior of $\mathsf{VF}(\lambda)$ as $\lambda \to \infty$ for different values of $\eta d$. In particular, the regime with $\eta d > 1$ is a {\em low density regime} while the regime with $\eta d < 1$ is a {\em high density regime}. In the high density regime, the intensity of the hard-sphere model is much smaller than the intensity $\lambda$ of the reference PPP.
\begin{proposition}
\label{prop:Pack_int}
For the torus-hard-sphere model with a fixed radius $R = \rbdd$,
\begin{align*}
&\lim_{\lambda \nearrow \infty} \frac{\mathsf{VF}(\lambda)}{\gamma \rbdd^d \lambda^{1 - \eta d}} = 1, \quad \text{ if }\, \eta d > 1, \\
&\lim_{\lambda \nearrow \infty} \mathsf{VF}(\lambda) = \rho^{\max} \gamma,   \quad \text{ if }\, \eta d < 1,\\
&\lim_{\lambda \nearrow \infty} \mathsf{VF}(\lambda) <\rho^{\max} \gamma,   \quad \text{ if }\, \eta d = 1.
\end{align*}
\end{proposition}

\section{Acceptance-Rejection Based Algorithms}
\label{sec:AR_methods}
In Section~\ref{sec:exact}, we present a naive acceptance-rejection (AR) algorithm for generating perfect samples of the hard-sphere model and analyze its expected running time complexity. We then proceed to present and analyze our importance sampling (IS) based AR algorithm where the key idea is to partition the configuration space $\mathscr{G}$ so that a well chosen IS technique can be  implemented on each partition.  One such IS for the hard-sphere model is the reference IS presented in Section~\ref{sec:RIS} where spheres are generated sequentially such that, whenever possible, the center of each sphere is selected uniformly over the region on $[0,1]^d$ that guarantees no overlap with the existing spheres.  However, generating samples from this IS measure can be computationally challenging when $d \geq 2$. The grid based IS introduced in Sections~\ref{sec:improv-const} and \ref{sec:improv-random}  overcomes this difficulty by imitating the reference IS, and interestingly, it is more efficient than the reference IS. \\

In every algorithm presented in this paper, the running time complexity is calculated under the assumption that checking overlap of a newly generated sphere with an existing sphere is done in a sequential manner.  That is, if there are $n$ existing spheres, the expected running time complexity of the overlap check is proportional to $n$. However, if enough computing resources are available, the overlap check can be done in parallel so that its running time complexity is a constant. We omit the discussion of this parallel overlap check because it is easy to modify the results to accommodate the parallel case, and also the key conclusions of the paper do not change.

\subsection{Naive AR Algorithm}
\label{sec:exact}
Algorithm~\ref{alg:ext1} is a naive AR algorithm for generating perfect samples of the Gibbs hard-sphere model. {The basic idea of the algorithm is standard \cite{Dev86}}, and its correctness is straightforward and hence omitted. 
\begin{algorithm}
  \caption{Naive AR Method}
  \label{alg:ext1}
  \begin{algorithmic}[1]
   \Repeat
    \State{Generate $N \sim \pois(\lambda)$}
    \State{$\state \leftarrow \varnothing$}
    \If {$N \neq 0$}
    \State{$i \leftarrow 0$}
    \Repeat
        \State{$i \leftarrow i +1$}
    \State{Generate $Y_i$ independently and uniformly distributed on $[0,1]^d$}
    \State{Generate a copy $R_i$ of $R$ independently of everything else}
    \State{$\state \leftarrow \state \cup \{(Y_i, R_i/\lambda^\eta)\}$}
    \Until {$i = N$ or $\state \notin \mathscr{A}$}
    \EndIf
      \Until {$\state \in \mathscr{A} $}\\
    \Return{$\state$}
  \end{algorithmic}
\end{algorithm}

 Let ${\mathcal{T}}_{\mathsf{NAR}}$ be the expected running time complexity of Algorithm~\ref{alg:ext1}, where the running time complexity denotes
the number of elementary operations performed by the algorithm; every elementary operation takes at most a fixed amount of time.
Note that the acceptance probability of each iteration is $\pno$. Then 
the expected total number of iterations of the algorithm is $1/\pno$.
Suppose $C_{\mathsf{itr}}(\lambda)$ is the expected running time complexity of an iteration. Then,
\begin{align}
\label{eqn:TAR}
 \comAR = \frac{C_{\mathsf{itr}}(\lambda)}{\pno}.
\end{align}

We now establish bounds on ${\mathcal{T}}_{\mathsf{NAR}}$,
and then establish its asymptotic behavior as $\lambda \nearrow \infty$ using Theorem~\ref{lem:non-ovr-rand}. In each iteration of Algorithm~\ref{alg:ext1},  spheres are generated in a sequential order until we see an overlap or a configuration with $N$ non-overlapping spheres. 
The key to prove Proposition~\ref{prop:AR_meth} is to establish that the expected number of spheres generated per iteration is $\Theta\lt(\lambda^{\min\{\eta d, 2 \}}\rt)$.
\begin{proposition}
\label{prop:AR_meth}
The expected running time complexity $C_{\mathsf{itr}}(\lambda)$ of an iteration of the naive AR algorithm, Algorithm~\ref{alg:ext1}, satisfies
\begin{align}
\label{eqn:TAR-vs-Pno}
C_{\mathsf{itr}}(\lambda) = \Theta\lt( \lambda^{\min\{\eta d, 2 \}}\rt).
\end{align}
Furthermore, the expected total running time $\comAR$ satisfies:
\[
 \comAR = \begin{cases}
 \Theta\lt(\lambda^{2}\rt), & \quad \text{ if }\, \eta d \geq 2,\\
 \Theta\lt(\lambda^{\eta d}\,\exp\Big(\lt(\gamma m_1/2 +o(1)\rt)\lambda^{2 - \eta d }\Big)\rt), & \quad \text{ if }\, 1 < \eta d < 2,\\
 \Theta\Big(\lambda^{\eta d}\, \exp\lt(\delta\lambda\rt)\Big), \text{ for some }\, 0 < \delta \leq  1,& \quad\text{ if }\,\eta d = 1,\\
 \Theta\lt( \lambda^{\eta d}\, \exp\Big((1 + o(1))\lambda\Big)\rt),& \quad \text{ if }\,0 < \eta d < 1.
 \end{cases}
\]
\end{proposition}

\begin{remark}
 From \eqref{eqn:TAR-vs-Pno} and Theorem~\ref{lem:non-ovr-rand}, we see that for large values of $\lambda$ and for $\eta d < 2$, $\comAR$  is mainly governed by $\pno$, which can be very small for large $\lambda$. This suggests that any rejection based perfect sampling algorithm with a significant improvement in the acceptance probability will have a significantly improved running time complexity.
\end{remark}

\subsection{Importance Sampling Based Acceptance-Rejection Algorithm}
\label{sec:AR_GSC}
A sequence of tuples ${\lt\{ \lt(D_{n,k}, \mu_{n,k}, \sigma_{n,k}\rt)_{k = 1}^K\rt\}_{n \in \mbb{N}_0}}$ with some $K  \leq \infty$ is called  {\em stable} IS sequence if for each ${n \in \mbb{N}_0}$,  ${\lt(D_{n,k} \rt)_{k = 1}^{K}}$ is a partition of $\mathscr{G}_n$, and
$(\mu_{n,k})_{k = 1}^{K}$  a sequence of probability measures  such that $\mu^0$  is absolutely continuous with respect to  $\mu_{n,k}$ on $D_{n,k} \cap \mathscr{A}$ and the corresponding likelihood ratio ${L_{n,k}(\lfs_n) := \frac{d\mu^0}{d\mu_{n,k}}(\lfs_n)}$ satisfies
 \begin{align*}
L_{n,k}(\lfs_n)  \leq \sigma_{n,k} \leq 1, \text{ if }\, \lfs_n \in D_{n,k} \cap \mathscr{A},
 \end{align*}
 for $k = 1, \dots, K$. Under the stability condition, for every measurable subset ${\mathscr{B} \subseteq  \mathscr{G}}$,
\begin{align}
 \mu(\mathscr{B}) &\propto \pp_{\mu^0}(\state \in \mathscr{B} \cap \mathscr{A}) = \sum_{n \in \mbb{N}_0} e^{-\lambda}\frac{\lambda^n}{n!} \Bigg(\sum_{k=1}^{K} \pp_{\mu^0}\lt(\state_n \in D_{n,k} \cap \mathscr{B} \cap \mathscr{A}\rt) \Bigg) \nonumber\\
        &= \sum_{n \in \mbb{N}_0} e^{-\lambda}\frac{\lambda^n }{n!} \Bigg(\sum_{k=1}^{K} \, \ee_{\mu_{n,k}}\big[ I\lt(\state_n \in D_{n,k} \cap \mathscr{B} \cap \mathscr{A}\rt)L_{n,k}(\state_n)\big] \Bigg) \nonumber\\
        &= \sum_{n \in \mbb{N}_0} e^{-\lambda}\frac{\lambda^n \wt \sigma(n)}{n!} \Bigg(\sum_{k=1}^{K} \frac{\sigma_{n,k}}{\wt \sigma(n)}\, \ee_{\mu_{n,k}}\Bigg[ \frac{I(\state_n \in D_{n,k} \cap \mathscr{B} \cap \mathscr{A})L_{n,k}(\state_n)}{\sigma_{n,k}}\Bigg] \Bigg)\nonumber \\
        &=  \sum_{n \in \mbb{N}_0} \frac{\lambda^n \wt \sigma(n)}{n!} \Bigg(\sum_{k=1}^{K} \frac{\sigma_{n,k}}{\wt \sigma(n)}\, \pp_{\mu_{n,k}}\big( J = 1, \state_n \in D_{n,k} \cap \mathscr{B} \cap \mathscr{A} \big) \Bigg),\label{eqn:mu_IS}
\end{align}
where ${\wt \sigma(n) := \sum_{k=1}^{K} \sigma_{n,k}}$, ${U \sim \mathsf{Unif}(0,1)}$ and $ J \sim \bern\left(\frac{L_{n,k}(\state_n)}{\sigma_{n,k}}\right)$.
Let $M$ be a non-negative integer valued random variable with the
pmf defined by,
\begin{align}
 \label{eqn:dist_M_gen_stab}
 \pp\lt(M = m\rt) = \frac{1}{C_{\lambda}}\frac{\lambda^m \wt \sigma(m)}{m!},\,\, m \in \mbb{N}_0,
\end{align}
where $C_{\lambda} := \sum_{n=0}^\infty \frac{\lambda^n \wt \sigma(n)}{n!}$.
The pmf \eqref{eqn:dist_M_gen_stab} is well defined because $\ee\lt[\wt \sigma(N)\rt]$ is finite under the stability condition.
Now consider Algorithm~\ref{alg:ext_gen_stab}. 
\begin{algorithm}
  \caption{Importance Sampling Based AR method}
  \label{alg:ext_gen_stab}
  \begin{algorithmic}[1]
  \Repeat
    \State{Generate a sample of $M$ with pmf \eqref{eqn:dist_M_gen_stab}}
    \State{Generate $J_1$ with pmf $\pp(J_1 = k) = \sigma_{M,k}/\wt \sigma(M)$, $k = 1, \dots, K$}
    \State{Generate a realization $\state$ of $M$ points under the measure $\mu_{M, J_1}$} \label{line:X_M}
    \State{Generate $ J_2 \sim \bern\lt(\frac{L_{M, J_1}(\state) I\lt(\state \in D_{M,J_1} \cap \mathscr{A}\rt)}{\sigma_{M,J_1}}\rt)$} \label{line:bern}
    \Until {$J_2 = 1$}\\
    \Return{$\state$}
    
  \end{algorithmic}
\end{algorithm}
\begin{proposition}
\label{prop:P_acc_IS}
Algorithm~\ref{alg:ext_gen_stab} generates a perfect sample of the Gibbs hard-sphere model. Furthermore, let $N \sim \pois(\lambda)$. Then the probability of accepting the configuration generated in an iteration of Algorithm~\ref{alg:ext_gen_stab} is given by
\begin{align}
\label{eqn:Pacc_IS}
 P_{\mathsf{acc}}(\lambda) = \frac{\pno}{\ee[\wt \sigma(N)]}.
\end{align}
\end{proposition}
We omit the proof of Proposition~\ref{prop:P_acc_IS} because the correctness easily follows from \eqref{eqn:mu_IS}, and \eqref{eqn:Pacc_IS} holds from the observation that
\begin{align*}
 P_{\mathsf{acc}}(\lambda) = \frac{1}{C_{\lambda}}\sum_{n \in \mbb{N}_0} \frac{\lambda^n \wt \sigma(n)}{n!} \Bigg(\sum_{k=1}^{K} \frac{\sigma_{n,k}}{\wt \sigma(n)}\, \ee_{\mu_{n,k}}\Bigg[ \frac{L_{n,k}(\state_n)}{\sigma_{n,k}}; \state_n \in D_{n,k} \cap  \mathscr{A}\Bigg] \Bigg).
\end{align*}
{Note that the expected number of iterations of Algorithm~\ref{alg:ext_gen_stab} is $1/P_{\mathsf{acc}}(\lambda)$. Corollary~\ref{cor:no_of_itr} is an important and trivial consequence of Proposition~\ref{prop:P_acc_IS}. 
\begin{corollary}
\label{cor:no_of_itr}
For all stable IS sequences $\lt\{ \lt(D_{n,k}, \mu_{n,k}, \sigma_{n,k}\rt)_{k = 1}^K\rt\}_{n \in \mbb{N}_0}$ with the same $\ee[\wt \sigma(N)] = \sum_{k =1}^{K}\ee\lt[\sigma_{N,k}\rt]$, the expected number of iterations of Algorithm~\ref{alg:ext_gen_stab} is the same.
\end{corollary}}

Suppose that $\wt C_{\mathsf{itr}}(\lambda)$  is the  expected running time complexity of an iteration of Algorithm~\ref{alg:ext_gen_stab}. Then the expected total running time of the algorithm is given by
\begin{align}
\label{eqn:ISAR_cost}
{\mathcal{T}}_{\mathsf{ISAR}} = \frac{\wt C_{\mathsf{itr}}(\lambda)\ee[\wt \sigma(N)]}{\pno},
\end{align}
where ${N \sim \pois(\lambda)}$.
Recall that the acceptance probability of the naive AR method is $\pno$.
It is reasonable to seek a valid stable IS sequence  ${\lt\{ \lt(D_{n,k}, \mu_{n,k}, \sigma_{n,k}\rt)_{k = 1}^K\rt\}_{n \in \mbb{N}_0}}$
so that ${\wt C_{\mathsf{itr}}(\lambda)\ee[\wt \sigma(N)]}$ is smaller than ${C_{\mathsf{itr}}(\lambda)}$.
In Subsections~\ref{sec:improv-const} and \ref{sec:improv-random}, we present applications of Algorithm~\ref{alg:ext_gen_stab} where $\mathcal{T}_{\mathsf{ISAR}}$ is indeed much smaller than $\comAR$.

\begin{remark}[Extension of IS Based AR to General Gibbs Point Processes]
Suppose that $\mu$ is the distribution of a Gibbs point process that is absolutely continuous with respect to $\mu^0$ with the corresponding Radon-Nikodym derivative given by
$ {\frac{d\mu}{d\mu^0} \lt(\lfs \rt) = \frac{\exp\lt( - \beta\, V(\lfs)\rt)}{Z},\, \lfs \in \mathscr{G}},$ where the constant $\beta \in \reals$ is known as inverse temperature,
$V$ is called non-negative potential function, and the normalizing constant
${Z  = \ee_{\mu^0} \lt[\exp\lt( - \beta\, V(\state)\rt) \rt]}$. If the stability condition holds true when $I(\lfs _n\in \mathscr{A})$ is replaced by $\exp\lt( - \beta\, V(\lfs_n)\rt)$, then Algorithm~\ref{alg:ext_gen_stab} can  generate perfect samples from $\mu$ if in line \ref{line:bern} of the algorithm, 
\[
J_2 \sim \bern\lt(\frac{L_{M, J_1}(\state) \exp\lt( - \beta\, V(\state)\rt)I\lt(\state \in D_{M,J_1} \rt)}{\sigma_{M,J_1}}\rt).
\]
To see that the hard-sphere model is a special case of such a Gibbs point process, take  $\beta > 0$ and assume that ${V(\lfs) = 0}$ if $\lfs$ is a non-overlapping configuration of spheres, otherwise, ${V(\lfs)  = \infty}$.
\end{remark}

\subsection{Reference Importance Sampling}
\label{sec:RIS}
We now introduce an IS measure, called reference IS and denoted by $\wt \mu_n$ for each $n$, so that  $\lt\{ \lt(\mathscr{G}_n, \wt \mu_n, \sigma_n\rt)\rt\}_{n \in \mbb{N}_0}$ is a stable IS sequence (with $K = 1$) that can be used in Algorithm~\ref{alg:ext_gen_stab} for generating perfect samples of the hard-sphere model for an appropriate choice of the sequence $\lt\{\sigma_n : n \in \mbb{N}_0\rt\}$. Under $\wt \mu_n$, first generate iid sequence $R_1, \dots, R_n$ identical in distribution to $R$, and then $n$ spheres are generated sequentially as follows. Generate the center of the first sphere uniformly distributed on $[0,1]^d$. Suppose that $i-1$ spheres are already generated. For the $i^{th}$ sphere generation, a subset $\mathcal{B}_i \subseteq [0,1]^d$ is called blocking region if $\mathcal{B}_i$ is the largest set such that the center $Y_i$ of the $i^{th}$ sphere falling in this region (that is, $Y_i \in \mathcal{B}_i $) would result in an overlap of the $i^{th}$ sphere with  one of the existing $i-1$ spheres.  
The center of the $i^{th} $ sphere is generated with uniform distribution over the non-blocking region $[0,1]^d\setminus \mathcal{B}_i$. If for any sphere $i \leq n$, the entire space is blocked (that is, $\mathcal{B}_i = [0,1]^d$), we select the centers of spheres $ i, \dots, n$ arbitrarily. Figure~\ref{fig:block} illustrates this for $d =2$ and $n = 1,2$. In conclusion, $\wt \mu_n$ is the distribution of an output of Algorithm~\ref{alg:Ref_IS}.

\begin{algorithm}
  \caption{Reference Importance Sampling}
  \label{alg:Ref_IS}
  \begin{algorithmic}[1]
  \State{{\bf Input:} The total number of spheres $n$}
  \State{$\state \leftarrow \varnothing$}
  \If {$n \neq 0$}
  \State{$\mathcal{B}_0 = \varnothing$ and $i \leftarrow 0$}
  \Repeat
      \State{$i \leftarrow i + 1$}
    \State{Generate a copy $R_i$ of $R$ independently of everything else so far generated}
        \If {$\mathcal{B}_{i} = [0,1]^d$}        
            \State{Select the center $Y_i$ of the $i^{th}$ sphere arbitrarily over $[0,1]^d$}
    \Else
       \State{Identify the non-blocking region $\mathcal{B}^{\mathsf{c}}_i$} \label{step:findingBc}
       \State{Generate $Y_i$ uniformly distributed over $\mathcal{B}^{\mathsf{c}}_i$}    
    \EndIf   
    \State{$\state \leftarrow \state \cup \{(Y_i, R_i/\lambda^\eta)\}$}  
    \Until{$i = n$}
%
    
 \EndIf\\
 
    \Return{$\state$}
    
  \end{algorithmic}
\end{algorithm}
\begin{figure}[h]
    \begin{center}
    \begin{subfigure}{0.47\textwidth}
        \begin{center}
        \includegraphics[height=1\textwidth]{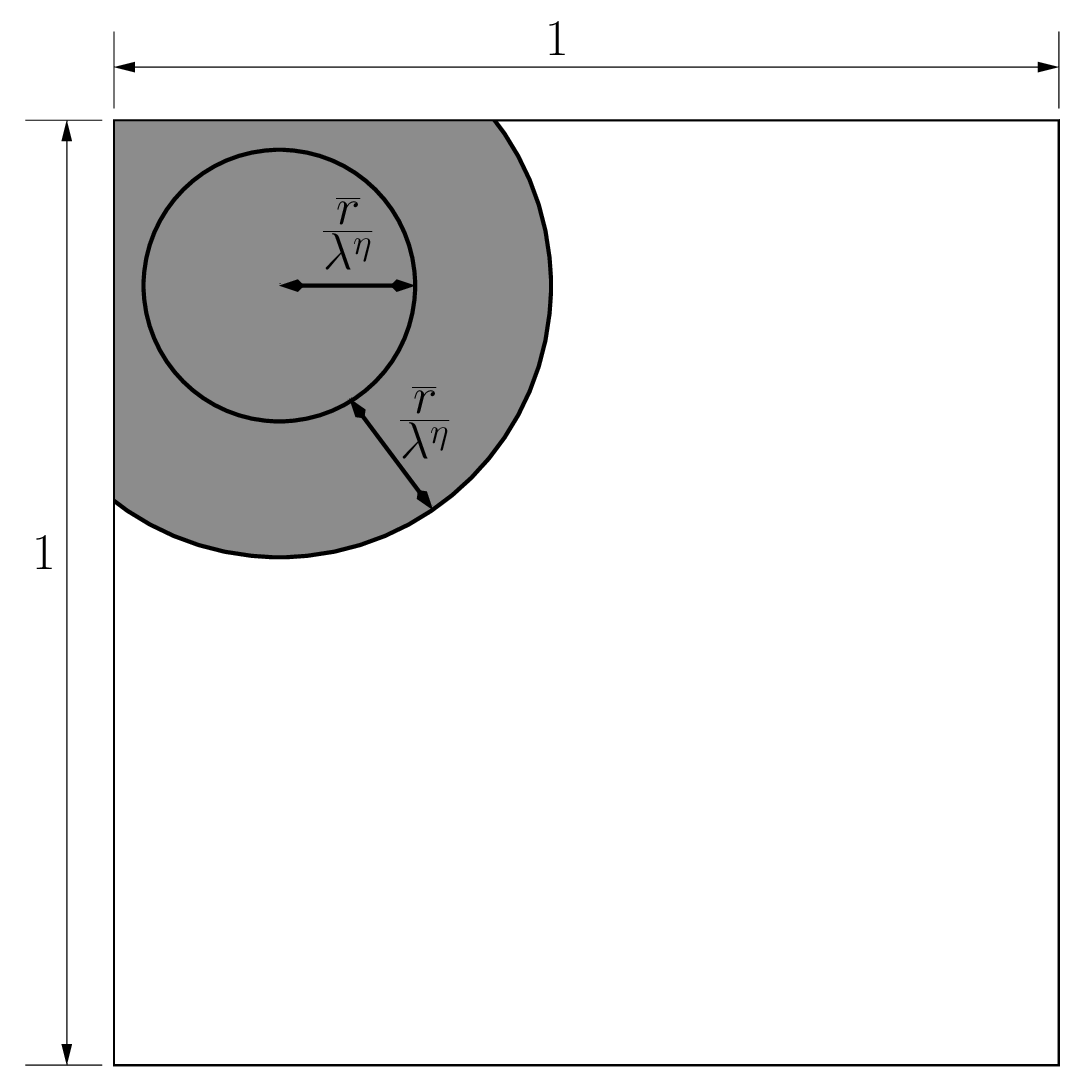} 
        \end{center}
        \caption{}
    \end{subfigure}%
    ~
    \begin{subfigure}{0.47\textwidth}
        \begin{center}
        \includegraphics[height=1\textwidth]{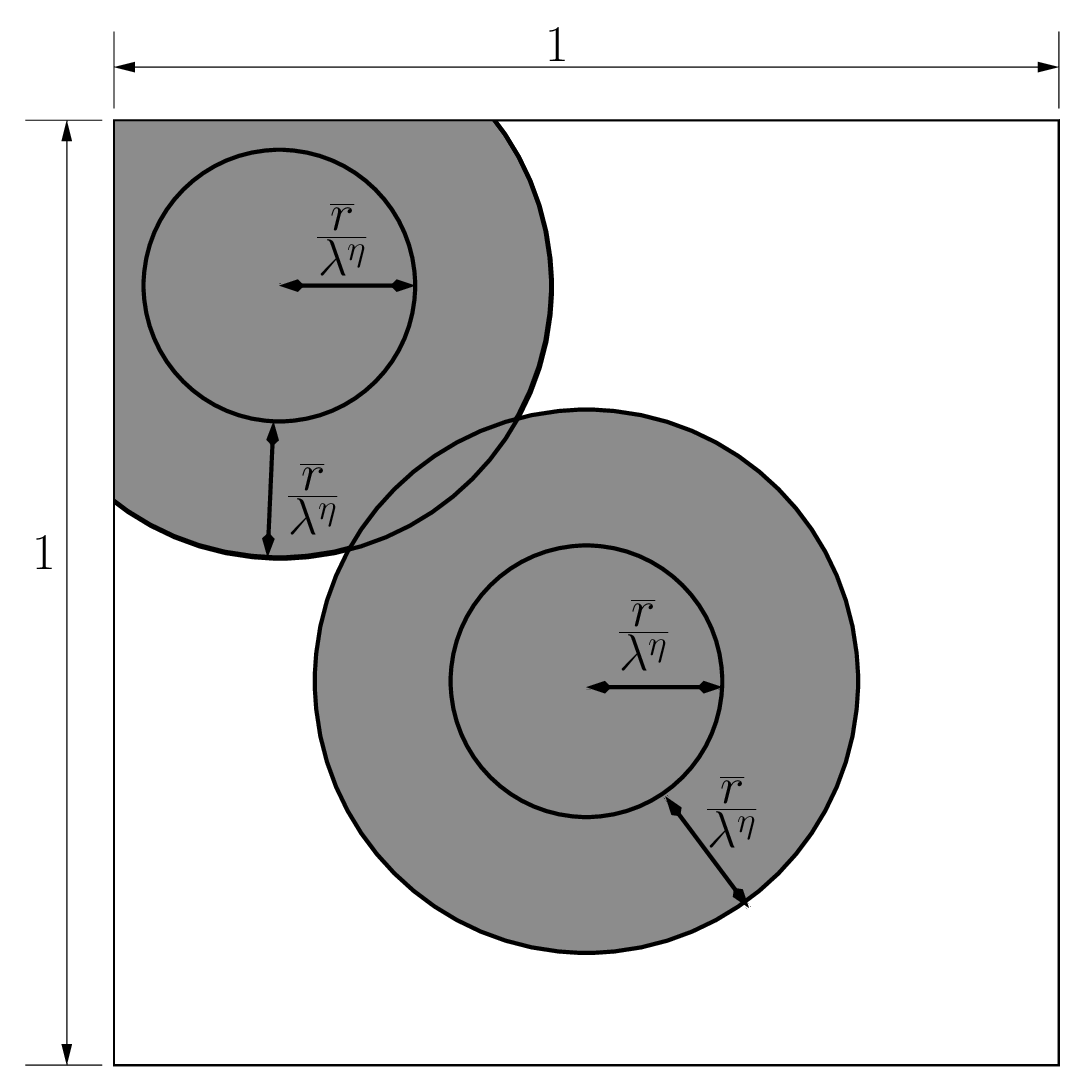} %
        \end{center}
        \caption{}
    \end{subfigure}
    \caption{Illustration of the reference IS method for a Euclidean-hard-sphere model on $[0,1]^2$ with spheres of fixed radius $\rbdd/\lambda^\eta$.
                    In (a) (respectively, (b)), the grey region represents the blocking area when generating the second circle (respectively, when generating 
                    the third circle). }
    \label{fig:block}
   \end{center}
\end{figure}

 Observe that $\mu^0$ is absolutely continuous with respect to $\wt \mu_n$ on $\mathscr{G}_n \cap \mathscr{A}$, and the associated likelihood
ratio satisfies 
\begin{align}
\label{eqn:LHRatio1}
 \wt L_n(\lfs_n) = \frac{d \mu^0}{d\wt \mu_n}(\lfs_n) = \prod_{i=1}^{n}\Big(1 - B_{i}\Big),
\end{align}
for all $\lfs_n \in \mathscr{G}_n \cap \mathscr{A}$ and $n \in \mbb{N}_0$, where {$B_{i}$ is the volume of $\mathcal{B}_i$} and $ \wt L_0 = 1$. Note that $\wt L_n(\lfs_n) = 0$ if and only if $\lfs_n \notin \mathscr{A}$ because for any $\lfs_n \notin \mathscr{A}$, there exists $i \leq n$ such that $B_i = 1$.\\

Observe that the blocking volume added by the $i^{th}$ sphere is at least $\gamma' \lt(R_i/\lambda^{\eta}\rt)^d$ when it does not overlap with any of the existing spheres.
This is because, for the torus-hard-sphere model, the entire volume within an accepted sphere is added to blocking volume, and for
the Euclidean-hard-sphere model, at least $1/2^d$ fraction of an accepted sphere is added to the blocking  volume. Thus,
\begin{equation}
\label{eqn:bni}
B_{i}\geq  \frac{\gamma'}{ \lambda^{\eta d}} \sum_{j =1}^{i-1} R_j^d,
\end{equation}
for every configuration $\lfs_{i-1} \in  \mathscr{G}_{i-1} \cap \mathscr{A}$. 
In particular, if all the spheres are of the same size with a fixed radius $\rbdd$, 
\begin{align}
\label{eqn:LHRatio_bdd}
 I(\lfs_n \in \mathscr{A}) \wt L_n(\lfs_n) \leq \prod_{i = 1}^{n} \lt( 1 - (i-1)\frac{\gamma'}{ \lambda^{\eta d}}  \rbdd^d\rt)^+ =: \delta_{n},
\end{align}
for all $n \in \mbb{N}_0$ and $\lfs_n \in  \mathscr{G}_n $, where $x^+ = \max(0,x)$ and $\delta_{0} = 1$. Then the stability condition is satisfied with $K = 1$, $D_{n,1} = \mathscr{G}_n $, $\mu_{n,1} = \wt \mu_n$  and $\sigma_{n,1} = \delta_n$ for $n \in \mbb{N}_0$. Thus, Algorithm~\ref{alg:ext_gen_stab} generates perfect samples of the fixed radius hard-sphere model, and from Proposition~\ref{prop:P_acc_IS}, the corresponding acceptance probability $$P_{\mathsf{acc}}(\lambda) = \frac{\pno}{\ee[\wt \sigma_N]} = \frac{\pno}{\ee[\delta_{N}]}.$$

\begin{remark}
\label{rem:tessellation}
\normalfont
When the dimension $d = 1$, spheres become line segments and thus it is easy to generate samples from the IS measure $\wt \mu_n$. However, for $d \geq 2$, generating samples under the reference IS is difficult because every time a new sphere is generated, we need to know  the volume of the blocking region created by the existing spheres and then we need to generate a point uniformly on this non-blocking region; see line \ref{step:findingBc} in Algorithm~\ref{alg:Ref_IS}.
One possible way to implement the reference IS is by combining a well-known method called {\em power tessellation} and a simple rejection method in two steps: i) Using the power tessellation, we can compute the blocking volumes exactly; see, e.g, \cite{Aurenhammer87} and \cite{MH08}.  ii) Then, use a simple acceptance-rejection method where repeatedly a point  is generated independently and uniformly on $[0,1]^d$ until it falls within the non-blocking region. Unfortunately,  implementing  the power tessellation method is computationally prohibitive. Besides, even if we have an efficient implementation of the power tessellation method, the above simple rejection step can be expensive when the non-blocking region is small. 
Fortunately, we can overcome both these difficulties by using a simple grid on $[0,1]^d$. From \eqref{eqn:ISAR_cost}, it is evident that if there are two IS methods with the same $\ee[\wt \sigma_N]$, it is computationally preferable to use the method that has smaller per iteration expected running time, $\wt C_{\mathsf{itr}}(\lambda)$. In Subsection~\ref{sec:improv-const}, we introduce a hyper-cubic grid based IS method that continues to generate perfect samples  while the blocking regions are closely approximated by grid cells. With a careful choice of the cell-edge length, we make sure that the inequality \eqref{eqn:LHRatio_bdd} holds for the grid IS as well (and thus, $\ee[\wt \sigma_N]$ is same as that of the reference IS). As a consequence of Corollary~\ref{cor:no_of_itr}, the expected iterations of Algorithm~\ref{alg:ext_gen_stab} is the same as that of the reference IS method. 
However, the grid method is easy to implement and has a much smaller expected iteration cost $\wt C_{\mathsf{itr}}(\lambda)$ compared to that of the reference IS. The choice of the hyper-cubic grid is just an option as it simplifies the implementation. However, the method can be implemented using other kinds of grids. In two dimensional case, for example, it is possible to use a hexagonal grid for implementing the IS method.
\end{remark}

\subsection{Grid Based Importance Sampling for Fixed Radius Case}
\label{sec:improv-const}
Consider the hard-sphere model with a fixed radius $\rbdd/\lambda^\eta$. Generation of $n$ spheres {under the} following grid based IS measure $\wh \mu_n$ starts by partitioning the underlying space $[0,1]^d$ into a hyper-cubic grid with a cell-edge length $ \varepsilon > 0$  such that $1/\varepsilon$ is an integer. The centers of the spheres are generated in a sequential order: Suppose that $i-1$ spheres with centers $Y_1, \dots, Y_{i-1}$ are already generated. At the time of $i^{th}$ sphere generation, a cell $C$ in the grid  is labeled as {\em fully-blocked} if the cell is completely inside a sphere with radius $2\rbdd/\lambda^\eta$ centered at an existing point, that is, $C \subseteq S(Y_j, 2\rbdd/\lambda^\eta)$ for some $j \leq i-1$; otherwise, the cell is labeled as {\em non-fully-blocked}. A non-fully-blocked cell $C$ is called {\em partially-blocked} if  $C \cap S(Y_j, 2\rbdd/\lambda^\eta) \neq \varnothing$ for some $j \leq i-1$; otherwise, it is called {\em non-blocked}. The center $Y_i$ of the $i^{th}$ sphere is selected uniformly over the non-fully-blocked cells, because selecting $Y_i$ over a fully-blocked cell will certainly result in  the $i^{th}$ sphere overlapping with an existing sphere. We  then check for overlap only if $Y_i$ is generated over a partially-blocked cell, because the overlap is not possible if $Y_i$ is generated over a non-blocked cell.  If either there is an overlap or all the cells are fully-blocked by the existing spheres, the centers $Y_i, \dots, Y_n$ of the remaining spheres are selected arbitrarily (such a selection results in an overlapping configuration). Otherwise, for  the next sphere $i+1$ generation, we repeat the same procedure by relabeling the non-fully-blocked cells by considering spheres $1, \dots, i$ as the existing spheres. Note that at the beginning of each iteration all the cells are labeled as non-blocked. Also note that since all the spheres have the same radius, for relabeling of the cells, we  only need to focus on the cells that might interact with the  last sphere generated.  See Figure~\ref{pic:impSim} for an illustration of this sequential procedure. \\

 \begin{figure}[h]
\centering
\includegraphics[width=0.7\textwidth, height=0.7\textwidth]{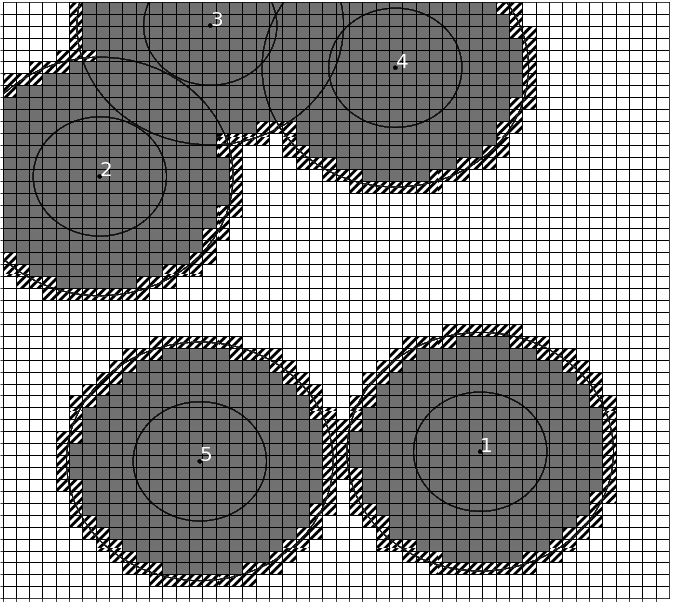}
\caption{A realization with 5 circles on the unit square $[0,1]^2$ generated using the grid based IS method for a Euclidean-hard-sphere model with a fixed radius (smaller circles). The grid size is $50\times 50$ and the radius is $0.1$. The bigger circle around each point is the actual region blocked by the circle.
For the $6^{th}$ circle generation, grey cells are fully-blocked, hatched cells are partially blocked, and white cells are non-blocked.}
\label{pic:impSim}
\end{figure}

Suppose that $\wh \mu_n$ is the probability measure under which $n$ spheres are generated by the above procedure. 
Then $\mu^0$ is absolutely continuous with respect to $\wh \mu_n $ on $ {\mathscr{G}}_n \cap \mathscr{A}$ and the corresponding likelihood ratio 
\begin{align*}
\wh L_n(\lfs_n)  := \frac{d\mu^0}{d\widehat \mu_n}(\lfs_n) = \prod_{i=1}^n\lt(1 -  \wh B_i\rt), \quad \lfs_n \in  {\mathscr{G}}_n \cap \mathscr{A},
\end{align*}
where $\wh B_i$ is the volume of fully-blocking cells for the $i^{th}$ sphere generation, that is, $\wh B_i$ equal to the product of the number of fully-blocked cells and $\varepsilon^d$. 
 To apply Algorithm~\ref{alg:ext_gen_stab} for the fixed radius hard-sphere model, take $K = 1$ and for each ${n \in \mbb{N}_0}$, take ${D_{n,1} =   {\mathscr{G}}_n}$, ${\mu_{n,1} = \wh \mu_n}$ and ${\sigma_{n,1} =\delta_n}$. Thus, ${\wt \sigma(n) = \delta_{n}}$ and ${L_{n,1}(\lfs_n) = \wh L_n(\lfs_n)}$ for all ${\lfs_n \in  {\mathscr{G}}_n \cap \mathscr{A}}$. \\

\noindent
{\bf Selection of the cell-edge length $\boldsymbol{\varepsilon}$:} Observe that the longest diagonal length of a cell is $\sqrt{d}\, \varepsilon$. Since we focus only on the non-overlapping configurations, in the implementation, we generate a sphere only if all the existing spheres are non-overlapping. Suppose that the cell-edge length $\varepsilon$ is selected so that 
$\displaystyle \sqrt{d}\, \varepsilon \leq \rbdd/\lambda^\eta.$
Then for the $i^{th}$ sphere generation, every cell that has non-empty intersection with $S(Y_j, \rbdd/\lambda^\eta)$, for any ${j =1, \dots, i-1}$, has to be fully-blocked, because such a cell is a subset of $S\lt(Y_j, 2\rbdd/\lambda^\eta\rt)$. Thus, the non-overlapping condition of the existing spheres imply that 
$\cup_{j = 1}^{i -1} S(Y_j, \rbdd/\lambda^\eta)$ is a subset of the union of the fully-blocked cells, and hence ${\wh B_i \geq \frac{\gamma' (i -1)\rbdd^d}{\lambda^{\eta d}}}.$
Thus, for $n \geq 1$,
\begin{align}
\label{eqn:lhr_grid_bdd}
 I(\lfs_n \in \mathscr{A})\wh L_n(\lfs_n) \leq \delta_{n} =  \prod_{i=1}^{n} \lt( 1 - (i-1) \frac{\gamma' \rbdd^d}{ \lambda^{\eta d}}  \rt)^+,\quad \lfs_n \in  {\mathscr{G}}_n.
\end{align}
This upper bound is same as that we obtained in the case of the reference IS; see the inequality \eqref{eqn:LHRatio_bdd}.  Since the acceptance probability ${P_{\mathsf{acc}}(\lambda)= \pno/\ee[\delta_N]}$ is the same for both the grid IS and the reference IS methods, we need to choose the cell-edge length $\varepsilon \leq \rbdd/\lambda^\eta$ so that the expected per iteration running time $\wt C_{\mathsf{itr}}(\lambda)$ is minimum. It is easy to see that the higher the value of $\varepsilon$, the smaller $\wt C_{\mathsf{itr}}(\lambda)$ due to the following reasons:
\begin{enumerate}
\item Labelling of the cells is faster if they are bigger in size;
\item Increase in the cell size increases the chances of overlap of the new sphere with the existing spheres, and hence on average each iteration generates fewer spheres; 
\end{enumerate}
In conclusion, we choose $\varepsilon = 1/ \lfloor \lambda^\eta/\rbdd\rfloor$ for the implementation of the grid IS method. \\

To reduce the per iteration complexity of the algorithm, we make some changes to the steps \ref{line:X_M} and \ref{line:bern} in Algorithm~\ref{alg:ext_gen_stab}. Observe that a realization $\state_n$ generated under $\wh \mu_n$ is accepted only if ${\state_n \in \mathscr{A}}$ and ${J =  1}$, where ${J \sim \bern\lt(\wh L_n(\state_n)/\delta_n\rt)}$. In the implementation, we generate an iid sequence $U_1, \dots, U_n \sim \unif(0,1)$ independent of everything else so far generated, and take
$J_i = I\lt( U_i \leq \frac{1 - \wh B_i}{(1 - (i-1) \gamma' \rbdd^d\lambda^{-\eta d})}\rt)$ for $i \leq n$. Since $J$ and the product $\prod_{i = 1}^n J_i$ are Bernoulli random variables with the same success probability $L(\state_n)/\delta_n$, 
to reduce the per iteration cost, we generate the $i^{th}$ sphere only if $J_i = 1$ and the existing spheres do not overlap with each other.\\

Algorithm~\ref{alg:Grid_IS} implements the grid based IS for a given $n$ with the above mentioned enhancements. 
Algorithm~\ref{alg:ext_gen_stab} is restated as Algorithm~\ref{alg:ext11}.
\begin{algorithm}[H]
  \caption{Grid Based Importance Sampling for Fixed Radius}
  \label{alg:Grid_IS}
  \begin{algorithmic}[1]
  \State{{\bf Input:} The total number of spheres $n \geq 1$ and a grid on $[0,1]^d$} 
  \State{{\bf Output:} $(\state, \mathsf{Status}) \in \mathscr{G} \times \{\mathsf{True}, \mathsf{False}\}$. Where $\mathsf{Status} = \mathsf{True}$ if $\state \in \mathscr{A}$ and $\mathsf{Status} = \mathsf{False}$ otherwise}
  \State{Label every cell as non-blocked}
%
  \State{$\state \leftarrow \varnothing, i \leftarrow 0$ and $\wh B \leftarrow 0$}
  \Repeat
    \State{$i \leftarrow i + 1$}
    \State{Generate $U \sim \unif(0,1)$}    
    \If { $U > \frac{1 - \wh B}{1 - (i-1) \gamma' \rbdd^d\lambda^{-\eta d} }$} 
            \State{\Return $(\state, \mathsf{False})$}
    \Else
         \State{Generate $Y_i$ uniformly distributed over the non-fully-blocked cells}
         \Statex{\hspace{1.1cm} (and independently of everything else so far generated)}
         \If{$Y_i$ is on a partially-blocked cell and there is an overlap} 
            \State{ \Return $(\state, \mathsf{False})$}
      	\EndIf
       \State{Update the cell labels}
       \State{Compute the volume $\wh B$ of the fully-blocked cells}      	
    \EndIf
   \State{$\state \leftarrow \state \cup \{(Y_i, \rbdd/\lambda^\eta)\}$}
    \Until{$i = n $}\\    
    \Return{$(\state, \mathsf{True})$}
  \end{algorithmic}
\end{algorithm}

\begin{algorithm}
  \caption{Perfect Sampling for hard-sphere model using Grid Based IS}
  \label{alg:ext11}
  \begin{algorithmic}[1]
    \State{Partition $[0,1]^d$ into a hypercube grid with cell-edge length $\varepsilon = 1/ \lfloor \lambda^\eta/\rbdd\rfloor$}
  \Repeat
    \State{Generate a sample of $M$ with pmf \eqref{eqn:dist_M_gen_stab}}
    \If {$M = 0$}
    	\State{$(\state, \mathsf{Status}) \leftarrow (\varnothing, \mathsf{True})$}
    \Else
	    \State{Obtain  an output $(\state, \mathsf{Status})$ from Algorithm~\ref{alg:Grid_IS} with $M$ and the grid as input}
    \EndIf
    \Until{$\mathsf{Status}  = \mathsf{True}$}\\
    \Return{$\state$}
  \end{algorithmic}
\end{algorithm}

\begin{remark}[The pmf of $M$]
\normalfont
Note that, for the current setup, the pmf of $M$, given by \eqref{eqn:dist_M_gen_stab}, becomes $\pp\lt(M = m\rt) = \frac{1}{C_{\lambda}}\frac{\lambda^m \delta_m}{m!}$, $m \in \mbb{N}_0$, where  the normalizing constant $C_{\lambda} = \sum_{n \in \mbb{N}_0} \frac{\lambda^n \delta_n}{n!}$. The support of the pmf is finite because $\delta_m = 0$ for all $m \geq \lambda^{\eta d}/(\gamma' \rbdd^d) + 1$. To increase the performance of the algorithm, we can further truncate the support of the pmf. Using the maximum packing density, we can obtain an integer~$m_{\max}$ such that $\state \notin \mathscr{A}$ for all $m \geq m_{\max}$ and configurations $\state$ with $|\state| = m$. In that case, we can take  $\pp\lt(M = m\rt) = \frac{1}{C_{\lambda}}\frac{\lambda^m \delta_m}{m!}$, $0 \leq m \leq m_{\max}$, with $C_\lambda = \sum_{n = 0}^{m_{\max}} \frac{\lambda^n \delta_n}{n!}$. For example, refer to \cite{MMSWD01} for finding maximum packing densities for $d = 2$ and $d = 3$. 
\end{remark}

We now focus on the expected running time analysis of Algorithm~\ref{alg:ext11}. By Proposition~\ref{prop:P_acc_IS}, the acceptance probability 
$P_{\mathsf{acc}}(\lambda)$ of Algorithm~\ref{alg:ext11} is 
$\pno/\ee[\wt \sigma(N)] = \pno/\ee[\delta_N].$
A proof of Proposition~\ref{prop:imp_samp} is given in Section~\ref{sec:proof_imp_samp}.
\begin{proposition}
\label{prop:imp_samp}
 For the fixed radius hard-sphere model, there exists a constant $c > 0$  such that
 \begin{align}
 \label{eqn:ISAR_cost_grid}
  \mathcal{T}_{\mathsf{ISAR}} \leq c \, \ee\lt[ \delta_N\rt] \frac{\lambda^{\min\{\eta d, 1\}}}{\pno},
 \end{align}
where $N \sim \pois(\lambda)$.
Furthermore,
 \begin{align*}
  \limsup_{\lambda \nearrow \infty }\lt[ \frac{1}{\lambda^{2 - \eta d}} \log \ee\lt[\delta_N\rt] \rt] &\leq  - \frac{\gamma' \rbdd^d}{2}, \,\, \text{ if }\, \eta d > 1, \,\, \text{ and} \\
  \limsup_{\lambda \nearrow \infty }\lt[ \frac{1}{\lambda} \log \ee\lt[\delta_N\rt] \rt] & \leq - b,  \, \text{ if }\, 0 < \eta d \leq 1, \text{ for some constant }\, b > 0.
 \end{align*}
\end{proposition}
The following result is a trivial consequence of Propositions~\ref{prop:AR_meth} and \ref{prop:imp_samp}.

\begin{corollary}
\label{cor:TIS_vs_TAR}
For the fixed radius hard-sphere model, if $\eta d \geq 2$, both $\mathcal{T}_{\mathsf{ISAR}}$ and  $\comAR$ are of the same order, and  if $0 < \eta d < 2$, there exists a constant $c > 0$ such that
 $\displaystyle\mathcal{T}_{\mathsf{ISAR}} \leq c\,  \ee\lt[\delta_N\rt] \comAR.$
\end{corollary}

\begin{remark}[Better choice of $\delta_n$ for the Euclidean-hard-sphere model]
\label{rem:ChSigOneDim}
\normalfont
If {the spheres} are Euclidean, further improvements in the choice of $\delta_n$ can be obtained by accounting for  boundary effects. For instance, for $d=2$, the four corners of $[0,1]^2$ are covered by at most $4$ circles, each of which contributing a blocking area of at least $\gamma'\rbdd^2/\lambda^{2 \eta} = \pi\rbdd^2/4\lambda^{2 \eta}$, while each of the remaining circles contributing a blocking area of at least $2\gamma'\rbdd^2/\lambda^{2 \eta} = \pi\rbdd^2/2\lambda^{2 \eta}$. Let  $b_0 = 0$, $b_i =   (i - 1)\frac{\pi \rbdd^2}{4 \lambda^{2\eta}}$ for $1\leq i \leq 5,$ and $b_i = \frac{\pi \rbdd^2}{\lambda^{2\eta}} + (i-4)\frac{\pi\rbdd^2}{2\lambda^{2 \eta}}$ for $i \geq 6$.
Then, for this particular scenario, a better choice of $\delta_n$ in \eqref{eqn:LHRatio_bdd} (as well as in \eqref{eqn:lhr_grid_bdd}) is 
$\delta_n = \prod_{i=1}^{n} \lt( 1 - b_n  \rt)^+,\, n \in \mbb{N}_0$.
\end{remark}

\subsection{Random Radii Case}
\label{sec:improv-random}
We now consider another application of Algorithm~\ref{alg:ext_gen_stab} for the hard-sphere model {when under the marked PPP the radii of the spheres are iid.}
For the fixed radius case presented in Section~\ref{sec:improv-const}, the proposed IS method ensured a uniform bound $\delta_n$
on the likelihood ratio over ${\mathscr{G}}_n$ for every $n \in \mbb{N}_0$, as shown in \eqref{eqn:lhr_grid_bdd}. Such upper bounds are possible for a random radii hard-sphere model if the radii are bounded below by a positive constant. Furthermore, a similar analysis can be established when the spheres are replaced with iid convex shapes such that each shape occupies a minimum positive volume. However, when the radii are not bounded from below almost surely, the associated blocking volumes can be arbitrarily small.
We address this issue by partitioning ${\mathscr{G}}_n$ into two sets $D_{n,1}$ and $ D_{n,2}$ for each $n$ so that the IS on $D_{n,1}$ is a grid based IS method that is similar to Algorithm~\ref{alg:Grid_IS} and the IS on $D_{n,2}$ is obtained by {exponentially twisting the distribution of $R^d$ to put high probability mass on configurations with lower volume spheres.}\\

We first introduce the exponential twisting of the distribution, say $G$, of $R^d$. Recall that $R$ is assumed to be a bounded non-negative random variable. Without loss of generality further assume that ${\alpha := \ee[R^d] > 0}$. 
Thus the {\it logarithmic moment generating function} of $R^d$ defined  by 
${\Lambda(\theta) := \log\lt(\ee\lt[\exp({\theta R^d}) \rt] \rt)}$ is finite for every $\theta \in \reals$. 
Furthermore, the derivative 
\[
\Lambda'(\theta) = \frac{\dd \Lambda(\theta)}{\dd \theta} = \frac{\ee\lt[R^d\exp({\theta R^d}) \rt] }{\ee\lt[\exp({\theta R^d}) \rt]}
\]
is finite and positive for all $\theta \in \reals$ and in particular, ${\Lambda'(0) = \alpha}$. In fact, using the results in Chapter~2 of \cite{DZ10}, it can be seen that 
$\Lambda(\theta)$ is strictly convex. As a consequence, $\Lambda'(\theta)$ is strictly increasing and hence
\[
\alpha_{\min} := \lim_{\theta \to -\infty} \Lambda'(\theta) < \alpha.
\]
Let $\wh \theta$ be such that 
${\Lambda'(\wh \theta) = \varrho}$ for some ${\varrho \in (\alpha_{\min}, \alpha)}$. 
Therefore, ${\wh \theta < 0}$. Now consider the distribution  $\wt G$ obtained by  exponentially twisting $G$ by the amount $\wh \theta$, that is, 
${\dd \wt G(t)} = \exp\lt( \wh \theta t - \Lambda(\wh \theta)\rt)\dd G(t).$
Fix a constant $a \in (0,1)$ and for each integer $n \geq 1$, define
\[
 H_n := \lt\{ (t_1, t_2, \dots, t_{\lceil n a \rceil}) \in \reals_+^{\lceil n a \rceil}: \frac{1}{\lceil n a \rceil}\sum_{i= 1}^{\lceil n a \rceil} t_i < \varrho \rt\}.
\]
We later  see that $a = 1/2$ is a good choice for increasing performance of the algorithm.
Let $\Lambda^*(\cdot)$ be the Legendre-Fenchel transform of $\Lambda$, that is, ${\Lambda^*(t) = \sup_{\theta \in \reals} \{\theta t -  \Lambda(\theta)\}}$. {This corresponds} to the large deviations rate function
associated with the empirical average of iid samples from~$G$. 
From the definition of $\wh \theta$ and the fact that $\Lambda(\theta)$ is strictly convex, {${\Lambda^*(\varrho) = \wh \theta  \varrho - \Lambda(\wh \theta) >0}$}. 
Since $\wh \theta < 0$, {for all ${(t_1, t_2, \dots, t_{\lceil n a \rceil}) \in H_n}$},
\begin{align*}
\exp\lt( \wh \theta \sum_{i=1}^{\lceil n a \rceil} t_i - \lceil n a \rceil \Lambda(\wh \theta)\rt)  &= \exp\lt( \wh \theta \sum_{i=1}^{\lceil n a \rceil} (t_i - \varrho) + \lceil n a \rceil\, \Lambda^*(\varrho)\rt)\\ &\geq \exp\lt( \lceil n a \rceil\, \Lambda^*(\varrho)\rt),
\end{align*}
and  thus,
\begin{align}
\label{eqn:LHRatio_ET}
 \prod_{i=1}^{\lceil n a \rceil} \frac{ \dd G}{\dd \wt G}( t_i) \leq \exp\lt( - \lceil n a \rceil\, \Lambda^*(\varrho)\rt) \leq \exp\lt( - n a \, \Lambda^*(\varrho)\rt).
\end{align}

Recall the definition of the distribution $\mu$ of the hard-sphere model given by \eqref{eqn:dist_mu_HS}.
To apply Algorithm~\ref{alg:ext_gen_stab}, select $K = 2$ and define
\begin{align*}
 D_{n,1} = \lt\{ \lfs_n = \{(x_1, r_1/\lambda^{\eta d}), \dots, (x_n, r_n/\lambda^{\eta d}) \} \in \mathscr{G}_n: (r_1^d, \dots, r^d_{\lceil n a \rceil}) \in H^{\mathsf{c}}_n\rt\},
\end{align*}
and $D_{n,2} =  {\mathscr{G}}_n\setminus D_{n,1},$
 for each $n$ where $H^{\mathsf{c}}_n$ is the complement of $H_n$ within $[0, \rbdd]^{\lceil n a \rceil}$. To apply Algorithm~\ref{alg:ext11}, we are now left with identifying the IS measures $\mu_{n,1}$ and $\mu_{n,2}$, and the corresponding bounds $\sigma_{n,1}$ and $\sigma_{n,2}$ for each ${n \in \mbb{N}_0}$. \\

The measure $\mu_{n,1}$  on $ D_{n,1}$ is again a grid based IS method similar to the grid method introduced for the fixed radius case in Subsection~\ref{sec:improv-const}. First, iid copies $\{R_1, \dots, R_n\}$ of $R$ are generated. Then, we construct a new grid and label each cell every time a new sphere is generated as follows. For the generation of the $i^{th}$ sphere with radius $R_i/\lambda^\eta$, we take the cell-edge length ${\varepsilon = 1/ \lceil \lambda^\eta/R_i\rceil}$. A cell $C$ in the grid is labeled as fully-blocked if ${C \subseteq S(X_j, (R_j + R_i)/\lambda^\eta)}$ for an existing sphere ${j \leq i-1}$ with the center $X_j$ and the radius $R_j/\lambda^\eta$; otherwise, the cell is labeled as non-fully-blocked. A non-fully-blocked cell $C$ is called partially blocked if ${C \cap S(X_j, (R_j + R_i)/\lambda^\eta) \neq \varnothing}$ for some ${j \leq i-1}$; otherwise, it is called non-blocked. Then the next center $X_i$ is generated uniformly over the non-fully-blocking cells. Just like in the case of fixed radius, $X_1$ is generated uniformly over $[0,1]^d$ and we check the possibility of the overlap of $i^{th}$ sphere with an existing sphere only if $X_i$ falls on a partially-blocked cell.\\

       The measure $\mu^0$ is absolutely continuous with respect to $\mu_{n,1}$ on ${D_{n,1} \cap \mathscr{A}}$ and the associated likelihood ratio $L_{n,1}$ is given by  
       $$L_{n,1}(\lfs_n) =  \prod_{i=1}^{n}\Big(1 - B_{i}\Big), \quad \lfs_n \in D_{n,1} \cap \mathscr{A},$$
       where $\wh B_i$ is the volume of all the fully-blocked cells for the $i^{th}$ sphere generation. By \eqref{eqn:bni} and the fact that the cell-edge length is $1/ \lceil \lambda^\eta/R_i\rceil$, we have
       ${B_{i} \geq \min\lt(1, \frac{\gamma' \lceil n a \rceil}{ \lambda^{\eta d}} \varrho\rt)}$ on ${D_{n,1} \cap \mathscr{A}}$ for all ${i \geq \lceil n a \rceil + 1}$ because
       $\frac{1}{\lceil n a \rceil}\sum_{j =1}^{\lceil n a \rceil} R_j^d \geq \varrho$ over the set $H^{\mathsf{c}}_n$. Consequently, 
       \[
       I(\lfs_n \in \mathscr{A}) L_{n,1}(\lfs_n) \leq \lt[\lt(1 - \frac{\gamma' \lceil n a \rceil}{ \lambda^{\eta d}} \varrho \rt)^+\rt]^{n(1 - a)} =: \sigma_{n, 1}, \quad \lfs_n \in D_{n,1}.
       \]

      The measure $\mu_{n,2}$ is induced by the following procedure: Generate iid samples $R^d_1, \dots, R^d_{\lceil n a \rceil}$ from $\wt G$, and
       independently of this, generate iid samples $R^d_{\lceil n a \rceil + 1}, \dots, R^d_n$ from $G$. For $i =1, \dots, n$, the radius of the $i^{th}$ sphere is $R_i/\lambda^d$ and the center generated uniformly distributed over the non-blocking region created by the existing $i-1$ spheres. 
       Since $R^d_1, \dots, R^d_{\lceil n a \rceil}$ are sampled from $\wt G$, by \eqref{eqn:LHRatio_ET},
       \[
       I(\lfs_n \in \mathscr{A})L_{n,2}(\lfs_n) = \prod_{i=1}^{\lceil n a \rceil} \frac{\dd G}{\dd\wt G}(r^d_i) \leq \exp\lt( - n a \, \Lambda^*(\varrho)\rt) =: \sigma_{n,2}, 
       \text{  for all }\,\lfs_n \in D_{n,2}.
       \]
       
In summary,  $\lt\{ \lt(D_{n,k}, \mu_{n,k}, \sigma_{n,k}\rt)_{k = 1}^2\rt\}_{n \in \mbb{N}_0}$ is a stable IS sequence, and hence Algorithm~\ref{alg:ext_gen_stab} generates perfect samples from $\mu$. However, to reduce the per iteration complexity (as in the fixed radius case), we make some modification to the algorithm. Algorithm~\ref{alg:Grid_IS_RR} is similar to Algorithm~\ref{alg:Grid_IS} and Algorithm~\ref{alg:ext_gen_stab} is restated as Algorithm~\ref{alg:ext_gen_stab_RR}. 
  \begin{algorithm}[h]
  \caption{Grid Based Importance Sampling for Random Radii Case}
  \label{alg:Grid_IS_RR}
  \begin{algorithmic}[1]
  \State{{\bf Input:} The total number of spheres $n \geq 1$} 
  \State{{\bf Output:} $(\state, \mathsf{Status}) \in \mathscr{G} \times \{\mathsf{True}, \mathsf{False}\}$. Where $\mathsf{Status} = \mathsf{True}$ if $\state \in \mathscr{A}$ and $\mathsf{Status} = \mathsf{False}$ otherwise}
  \State{$i \leftarrow 0$}
  \State{$\state \leftarrow \varnothing$}   
  \Repeat
    \State{$i \leftarrow i + 1$}
    \State{Generate a copy $R_i$ of $R$ independently of everything else so far generated}
    \State{Construct a grid on $[0,1]^d$ with the cell-edge length $\varepsilon = 1/ \lceil \lambda^\eta/R_i\rceil$}
    \State{Identify the label of each cell in the new grid}   
    \State{Compute the volume $\wh B$ of the fully-blocked cells and generate $U \sim \unif(0,1)$}
    \If { $U > \frac{1 - \wh B}{\lt(1 - \gamma' \lceil n a \rceil \varrho  \lambda^{ -\eta d}  \rt)^{1- a} }$} 
            \State{\Return $(\state, \mathsf{False})$}                     
    \Else          
         \State{Generate $Y_i$ uniformly distributed over the non-fully-blocked cells}
         \Statex{\hspace{1.1cm} (and independently of everything else so far generated)}          
         \If{$Y_i$ is on a partially-blocked cell and there is an overlap} 
            \State{ \Return $(\state, \mathsf{False})$} 
	\EndIf
    \EndIf
    \State{$\state \leftarrow \state \cup \{(Y_i, R_i/\lambda^\eta)\}$}
  \Until{$i = n$}\\
\Return{$(\state, \mathsf{True})$}
    
  \end{algorithmic}
\end{algorithm}

\begin{algorithm}[H]
  \caption{Perfect Sampling for hard-sphere model with Random Radii}
  \label{alg:ext_gen_stab_RR}
  \begin{algorithmic}[1]
  \Repeat   
    \State{Generate a sample of $M$ with pmf \eqref{eqn:dist_M_gen_stab}}   
    \State{Generate $J$ with pmf $\pp(J = k) = \sigma_{M,k}/\wt \sigma(M)$, $k = 1, 2$}
    \If {$J  = 1$}    
    \State{Obtain an output $(\state, \mathsf{Status})$ of  Algorithm~\ref{alg:Grid_IS_RR} with $M$ as input}       
    \Else
    \State{Generate $\state$ under $\mu_{M,2}$} 
    \If {$ \bern\lt(\frac{L_{M, 2}(\state) I\lt(\state \in D_{M,2} \cap \mathscr{A}\rt)}{\sigma_{M,2}}\rt) = 0$} 
    \State{$\mathsf{Status}  \leftarrow  \mathsf{False}$}
%
    \EndIf
    \EndIf    
    \Until {$\mathsf{Status} = \mathsf{True}$}\\
    \Return{$\state$}
  \end{algorithmic}
\end{algorithm}

We now focus on the running time complexity of Algorithm~\ref{alg:ext_gen_stab_RR}. Notice that $\wt \sigma(n) = \sigma_{n,1} + \sigma_{n,2}$ for each $n \in \mbb{N}_0$.
By Proposition~\ref{prop:P_acc_IS}, $P_{\mathsf{acc}}(\lambda) = \pno/\ee\lt[\wt \sigma(N)\rt]$ with $N \sim Poi(\lambda)$.
Observe that $\sigma_{n,1} \leq \exp\lt( - \frac{\gamma' n^2\, a(1 - a)}{\lambda^{\eta d}} \varrho \rt)$.
The proof of Proposition~\ref{prop:imp_samp} can be extended to {the current} scenario to show that
\begin{align*}
  \limsup_{\lambda \nearrow \infty }\lt[ \frac{1}{\lambda^{2 - \eta d}} \log \ee\lt[ \sigma_{N,1}\rt] \rt] &\leq  - \gamma' \, a(1 - a)\varrho,  \,\, \text{ if }\, \eta d > 1, \,\, \text{ and}\\
  \limsup_{\lambda \nearrow \infty }\lt[ \frac{1}{\lambda} \log \ee\lt[\sigma_{N,1}\rt] \rt] & \leq - b,  \, \text{ if }\, 0 < \eta d \leq 1, \text{ for some constant }\, b > 0.
 \end{align*}
It is now clear that a good choice for $a$ is $1/2$ because it maximizes $a(1 - a)$.
Furthermore, using the moment generating function of Poisson random variables, we have
\[
 \ee\lt[ \sigma_{N,2}\rt] \leq \exp\lt( - \lambda \lt(1 - e^{ -\Lambda^*(\varrho)/2} \rt)\rt).
\]
Recall that ${\mathcal{T}_{\mathsf{ISAR}} \leq  \ee\lt[\wt  \sigma(N)\rt] \wt C_{\mathsf{itr}}(\lambda)/\pno}$. The per iteration complexity $\wt C_{\mathsf{itr}}(\lambda)$ mainly determined by relabelling of cells in the new grid for each sphere generation. 
The grid size for the $i^{th}$ sphere generation is an order of $\lambda^{\eta d}/R_i^d$ and the total number of spheres generated in each iteration is at most an order of $\lambda^{\min\{\eta d, 1\}}$. Therefore, for $\eta d \leq 2$, we can show that $\wt C_{\mathsf{itr}}(\lambda)$ is of order $\lambda^{\min\{\eta d, 1\}}\ee[1/R^d]$.  

\begin{remark}
\normalfont
If $\varrho$ is selected to equal
$\argmin_{\varrho \in (\alpha_{\min}, \alpha)} \,\, (\sigma_{n,1} + \sigma_{n,2})$ for each $n =  1, 2, \dots$, then $\ee\lt[\wt  \sigma(N)\rt]$ is minimum.  Note that   $\sigma_{n,1}$ decreases and $\sigma_{n,2}$ increases as functions of $\varrho$.
The above decompositions were chosen to illustrate ideas simply. More complex decompositions are easily created for further performance improvement. For instance, we could have defined $H_n$ above as
$$H^{\mathsf{c}}_n := \lt\{ (r_1, \dots, r_{n}) \in \reals_+^{n}: \frac{1}{m}\sum_{i= 1}^{m} r_i \geq \varrho_m, \,\forall m \leq n \rt\},$$
and then arrived at appropriate $\{\varrho_m\}_{m \leq n}$ and appropriate changes of measures
for configurations in  $H_n$ and  $H_n^{\mathsf{c}}$. While this should lead to substantial performance improvement,
it also significantly  complicates the analysis.
\end{remark}

\section{Dominated CFTP Methods}
\label{sec:dcftp}
In this section, we review some of the well-known dominated CFTP algorithms for the hard-sphere models. 
We refer to  \cite{KM00} for a general description of  the dominated CFTP for Gibbs point processes (this method was first proposed for area-interaction processes by Kendall \cite{KW98}).\\

\newcommand{\mD}{\mathbf{D}}
\newcommand{\mZ}{\mathbf{Z}}
\newcommand{\mU}{\mathbf{U}}
\newcommand{\mL}{\mathbf{L}}
Let $\mD = \{\mD(t): t \in \reals\}$ be the so-called dominating birth-and-death process on $[0,1]^d$ with births arriving as a Poisson process with rate $\lambda$, where each birth is a uniformly and independently generated marked point on $[0,1]^d$ that denoting the center of a sphere with the mark being its radius. Each birth is alive for an independent random time exponentially distributed with mean one. It is well known that the steady-state distribution of $\mD$ is $\mu^0$. 
Furthermore, it is easy to generate the dominating process $\mD$ both forward and backward in time so that ${\mD(t) \sim \mu^0}$ for all $t$. To see this, let ${\dots < t_{-2} < t_{-1} < 0 < t_1 < t_2 < \dots}$ be the event instants of the process $\mD$, where an event can be either a birth or a death. Assume that with each birth there is an additional mean one exponentially distributed independent mark to determine its life time. Since the births are arriving as a Poisson process, the interarrival times are exponential with mean $1/\lambda$. Generate ${\mD(0) \sim \mu^0}$, determine the next event instant $t_1$ and take ${\mD(t) = \mD(0)}$ for ${0\leq t < t_1}$. If the next event is a birth, generate a new independent (marked) point; otherwise, remove the existing point with the smallest lifetime. Continue the same procedure starting with $\mD(t_1)$ to generate the process over $[t_1, t_2)$, and so on.
For generating the dominating process $\mD$ backward in time, observe that $\mD$ is time-reversible, and hence we can generate
$\{\mD(t): -T \leq t \leq 0\}$ for any finite ${T > 0}$ just by generating an independent copy $\{\wt{\mD}(t):\, 0 \leq t \leq T\}$ of the dominating process $\left\{\mD(t):\, 0 \leq  t \leq T\right\}$
and taking ${\mD(-t) = \wt{\mD}(t)}$ for ${0 \leq t \leq T}$. \\

Since the distribution $\mu$ of the hard-sphere model is absolutely continuous with respect to $\mu^0$, using {\it coupling}, it is possible to construct a spatial birth-and-death process ${\mZ = \{\mZ(t) : t \in \reals\}}$, called the {\it interaction} process, such that ${\mZ(t) \subseteq \mD(t)}$ and ${\mZ(t) \sim \mu}$ for all ${t \in \reals}$; see \cite{KM00}. Each iteration of any dominated CFTP method essentially involves the following two steps: 
\begin{enumerate}
\item Fix $n > 0$ and  construct ${\{\mD(t): t_{-n} \leq t \leq 0\}}$ backward in time starting at time zero with ${\mD(0) \sim \mu^0}$
\item Then, as detailed in Sections~\ref{sec:kendall}-\ref{sec:huber}, use {\em thinning} on the dominating process $\{\mD(t): t_{-n} \leq t \leq 0\}$ to obtain an upper bounding process ${\{\mU_n(t) : t \geq t_{-n}  \}}$ with ${\mU_n(t_{-n}) = D(t_{-n})}$ and a lower bounding process ${\{\mL_n(t_{-n}) : t \geq t_{-n} \}}$ with ${\mL_n(t_{-n}) = \varnothing}$ forward in time such that for ${t \geq t_{-n}}$, ${\mL_n(t)\subseteq \mZ(t) \subseteq \mU_n(t) \subseteq \mD(t)}$ and ${\mL_m(t)\subseteq \mL_n(t) \subseteq \mU_n(t) \subseteq \mU_m(t)}$ for $m \leq n$.
\end{enumerate}
 If $\mU_n$ and $\mL_n$ {\em coalescence} at time $0$, that is, ${\mU_n(0) = \mL_n(0)}$, then  $\mU_n(0)$ is a perfect sample from the target distribution $\mu$. If there is no coalescence, then repeat the steps by increasing~$n$ and extending the dominating process ${\{\mD(t): -t_{-n} \leq t \leq 0\}}$ further backward to time $t_{-n}$ and repeat the same procedure. It is well known that a good strategy for increasing $n$ is doubling it after every iteration.  The criteria for thinning depends on the coupling used for constructing $\mZ$. However, the dominating process $\mD$ depends only on $\lambda$. In summary, a dominated CFTP algorithm is described by Algorithm~\ref{alg:kendall}.
\begin{algorithm}
  \caption{Dominated CFTP}
  \label{alg:kendall}
  \begin{algorithmic}[1]
    \State{Generate ${\{\mD(t) : t_{-1} \leq t \leq 0\}}$ with ${\mD(0) \sim \mu^0}$}
    \State{$n \leftarrow 1$}
    \Repeat
        \State{$n \leftarrow 2*n$}
    \State{Extend $\mD$ backwards from ${\{\mD(t): t_{-n/2} \leq t \leq 0\}}$ to ${\{\mD(t): t_{-n} \leq t \leq 0\}}$ \label{step:3_kendall}}
    \State{Construct $\left\{ \mL_n(t): t_{-n} \leq t \leq 0\right\}$ and $\left\{ \mU_n(t): t_{-n} \leq t \leq 0\right\}$}
    \Until {$\mL_n(0) = \mU_n(0)$}\\
    \Return {$\mL_n(0)$}
  \end{algorithmic}
\end{algorithm}

Consider the {\it backward coalescence} time {${N^* = \min\left\{n \in \mbb{N}_0: L_n(0) = U_n(0) \right\}}$.}
The average running time complexity of Algorithm~\ref{alg:kendall} depends on the number of operations involved within $N^*$, which further depends on the construction of the interaction process and the bounding  processes. 
At each iteration, the length of the dominating process $\mD$ is doubled on average backwards in time. Hence, on average the running time complexity doubles at each iteration.
From the definition of $N^*$, the length of the last iteration is $2^{\lceil\log_2 N^*\rceil} \geq N^*$.
Let ${N^f = \min\lt\{n \in \mbb{N}_0: \mL_0(t_n) = \mU_0(t_n) \rt\}}$ be the {\it forward} coalescence time.
Due to the reversibility of the dominating process, it can be shown that $N^*$ and $N^f$ are identical in distribution \cite{AG07},
and hence the expected computational effort for constructing the dominating, upper bounding  and lower bounding processes up to the forward coalescence time $N^f$,
starting from time $0$, is a lower bound on the expected running time of the algorithm.\\

Below we consider three dominated CFTP methods applicable for the hard-sphere models.

\subsection{Method 1}
\label{sec:kendall}
This method is based on \cite{KM00}. 
Note that the {\em Papangelou conditional intensity} of the hard-sphere model is given by
\begin{align}
\label{eqn:Papangelou_HS}
 \ell(\lfs, x) := \frac{I\lt(\lfs\cup\{x\} \in \mathscr{A}\rt) }{I\lt(\lfs \in \mathscr{A}\rt)} = I\lt(\lfs\cup\{x\} \in \mathscr{A}\rt),
\end{align}
with the convention that $0/0 = 0$. The interaction process $\mZ = \{\mZ(t): t \in (-\infty, \infty)\}$ is constructed as follows: Suppose $x$ is a birth to $\mD$ that sees $\mZ$ in
a state $\lfs \in  {\mathscr{G}}$.  Then $x$ is added to $\mZ$ if and only if $\ell(\lfs, x) = 1$.
Every death in $\mD$ {\em reflects} in $\mZ$, that is, if there is a death of a point $y$ in $\mD$, then $y$ is removed from the process $\mZ$ as well if it is present.
It can be shown that $\mu$ is the unique invariant  probability measure of $\mZ$; see, e.g., \cite{GNL00} or \cite{FFG02}.\\

For each $n \geq 1$, the bounding processes are constructed as follows: 
As mentioned earlier take ${\mL_n(t_{-n}) = \varnothing}$ and ${\mU_n(t_{-n}) = \mD(t_{-n})}$. Suppose that ${\lfs^l = \mL_n(t_{i})}$  and ${\lfs^u = \mU_n(t_{i})}$ for ${-n \leq i <  0},$
then assign $\mL_n(t) = \lfs^l$  and $\mU_n(t) = \lfs^u$ for $t_{i} < t < t_{i+1}$. In case it is a birth $x$ in
the dominating process $\mD$ at time $t_{i+1}$, set $\mL_n(t_{i+1}) = \lfs^l\cup\{ x\}$ if $ \ell(\lfs^u,x) = 1$;
otherwise, it will remain unchanged, that is, $\mL_n(t_{i+1}) = \lfs^l$. Similarly, set $\mU_n(t_{i+1}) = \lfs^u \cup \{ x\}$ if $\ell(\lfs^l,x) = 1$;
otherwise, set $\mU_n(t_{i+1}) = \lfs^u$. Every death in the dominating process reflects in both lower and upper bound processes.
Note that a birth is accepted by the lower bounding  process if the resulting state of the upper bounding  process is in $\mathscr{A}$. Similarly, a birth in $\mD$ is accepted in the upper bounding  process if the resulting state of the lower bounding  process is in $\mathscr{A}$.

\begin{theorem}
\label{thm:loss-system}
 The expected running time complexity $\comDCone$ of the above dominated CFTP algorithm satisfies
\begin{align}
\label{eqn:TDC_vs_TAR}
\comDCone \geq c\, \frac{\lambda}{\pno},
\end{align}
for some constant $c > 0$. In particular,
\[
 \comDCone = \begin{cases}
 \varOmega\Big(\lambda\Big), & \quad \text{ if }\, \eta d \geq 2,\\
 \varOmega\lt(\lambda\exp\left(\Big(\frac{\gamma \mu_1}{2} +o(1)\Big)\lambda^{2 - \eta d }\right)\rt), & \quad \text{ if }\, 1 < \eta d < 2,\\
 \varOmega\lt(\lambda\exp \lt(\Big(1 +o(1)\Big)\lambda\rt)\rt),& \quad \text{ if }\,0 < \eta d \leq 1.
 \end{cases}
\]
\end{theorem}

As highlighted by the numerical results in Section~\ref{sec:NumExp}, the lower bound \eqref{eqn:TDC_vs_TAR} is a loose bound, because the bound is established by considering the running time complexity only up to the time at which the
 lower bounding process receives its first arrival. This can be much smaller than the running time complexity until the coalescence of the upper and lower bound processes.

\subsection{Method 2}
\label{sec:Alt-DCFTP}
This method is an improved version of Method~1, again based on \cite{KM00}. 
Observe that at any given time $t \in \reals$, the interaction process $\mZ(t)$ can have only non-overlapping spheres. This suggests a better way of constructing the bounding processes that we describe now. For each $n \geq 1$, just like in Method~1, start with ${\mL_n(t_{-n}) = \varnothing}$ and ${\mU_n(t_{-n}) = \mD(t_{-n})}$ to guarantee that ${\mL_n(t_{-n}) \subseteq \mZ(t_{-n}) \subseteq \mU_n(t_{-n})}$. Suppose that the event at $t_{i+1}$ is an arrival of sphere $x$. Irrespective of whether $\mU_n(t_i) \in \mathscr{A}$ or not, if $x$ is not overlapping with any sphere in the upper bounding process $\mU_n(t_i)$, then it can not overlap with any sphere in $\mZ(t_i)$ and hence is accepted to $\mZ$. Thus, we add $x$ to both the bounding processes. (Observe that in Method~1, such an $x$ is added to both the bounding processes only if $\mU_n(t_i) \in \mathscr{A}$ because of the Papangelou conditional intensity \eqref{eqn:Papangelou_HS}.) If $x$ overlaps with any sphere in the lower bounding process $\mL_n(t_i)$, then it must overlap with a sphere in $\mZ(t_i)$ as well, and hence it is not added to any of the bounding processes $\mL$ and $\mU$. If $x$ does not overlap with any sphere in $\mL_n(t_i)$, but does overlap with a sphere in $\mU_n(t_i)$, its presence in the process $\mZ$ cannot be ruled out, and hence we keep it in the upper bounding process, but not in the lower bounding process. 
Finally, every death in $\mD$ is reflected in both the bounding processes $\mL_n$ and $\mU_n$.
Under this construction, the lower bounding process accepts births more often and hence the upper bounding process accepts births less often when compared with the construction in Section~\ref{sec:kendall}. As a result the running time of Method 2 is shorter than that of Method~1.

\subsection{Method 3}
\label{sec:huber}
A different approach for dominated CFTP for repulsive pairwise interaction processes has been proposed by Huber \cite{MH12}. Here, we discuss main ingredients of the method for hard-sphere model;
refer to \cite{MH12} and \cite{MH16} for more details. 
In this method, the interaction process $\mZ$ is different from $\mZ$ in Sections~\ref{sec:kendall}-\ref{sec:Alt-DCFTP} and is known as spatial birth-death {\it swap} process
whose invariant distribution is again the distribution $\mu$ of the hard-sphere model.
In addition to births and deaths of spheres, this process also allows {\it swap} moves; here a swap move is an event where an existing sphere is replaced by an arrival if it is the only sphere that is overlapping with the arrival. The lower and upper bounding processes are constructed as follows:
As usual let ${\mU_n(t_{-n}) =\mD(t_{-n})}$ and ${\mL_n(t_{-n}) = \varnothing}$. For any ${0 < k < n}$, if $t_{-k}$ is an instant of a death in the dominating process $\mD(t)$ then the death is reflected in
both the upper and lower bound processes. Now suppose that $x \in \mD(t_{-k})$ is born at $t_{-k}$.
\begin{itemize}
 \item[] {\bf Case 1:} If no sphere in $\mU_n(t_{-k})$ is overlapping with $x$, then the arrival sphere $x$ is added to both $\mU_n(t_{-k})$ and $\mL_n(t_{-k})$.
                       If only one sphere $y$ in $\mU_n(t_{-k})$ is overlapping with $x$, then $y$  is removed from $\mU_n(t_{-k})$ (from $\mL_n(t_{-k})$ if it is present) and $x$ is added to both $\mU_n(t_{-k})$ and $\mL_n(t_{-k})$.
 \item[] {\bf Case 2:} There are at least two spheres in $\mL_n(t_{-k})$ overlapping with $x$. Then $x$ is rejected by both $\mU_n(t_{-k})$ and $\mL_n(t_{-k})$.
 \item[] {\bf Case 3:} There is at most one sphere in $\mL_n(t_{-k})$ and at least two spheres in $\mU_n(t_{-k})$ overlapping with $x$.  Then $x$ is added to $\mU_n(t_{-k})$ (but not to $\mL_n(t_{-k})$). If $y \in \mL_n(t_{-k})$ is the one that is overlapping with $x$, then  remove $y$ from $\mL_n(t_{-k})$.
\end{itemize}

\section{Simulations}
\label{sec:NumExp}
We compare the performance of all the methods discussed in this paper using numerical experiments, and illustrate the effectiveness of the proposed IS based AR method over certain regimes where the other methods fail to work. For this, we consider {the torus-hard-sphere model} with a {fixed radius $\rbdd/\lambda^\eta$ on $2$-dimensional square $[0,1]^2$}. Thus, $\eta d = 2\eta$. In the first two experiments, by  fixing values of $\eta$ and $\rbdd$, we estimate the complexities of the algorithms as a function of the intensity $\lambda$ of the reference PPP by computing a sample average of the number of spheres (or, circles in this case) generated per generation of a perfect sample of the hard-sphere model. Instead of estimating the expected running time complexities, we take this approach to keep the discussion independent of the underlying data structures and programming language used in the implementation of the algorithms. In addition, we estimate the non-overlapping probability $\pno$ using the conditional Monte Carlo rare event estimation for {\em Gilbert graphs} proposed by \cite{HMTK20}. The Gilbert graph under consideration is a random graph where the nodes constitute a $\lambda$-homogeneous PPP on $[0,1]^2$ and there is an edge between two points if they are within a distance of $2\rbdd/\lambda^\eta$. Therefore, $\pno$ is the probability that there are no edges in the graph. The codes for all the methods discussed in this paper are available at \href{https://github.com/saratmoka/PerfectSampling_HardSpheres}{https://github.com/saratmoka/PerfectSampling\_HardSpheres}. \\

For the implementation of the proposed IS based AR method, the grid is constructed using the cell-edge length $\epsilon = 1/\lfloor \lambda^\eta/\rbdd\rfloor$; see Section~\ref{sec:improv-const} for more details on the cell-edge length selection. 
The complexities of the algorithms are estimated using $1000$ samples. In the simulation results presented below, $\wh S_{\mathsf{NAR}}$ and $\wh S_{\mathsf{ISAR}}$ denotes the sample means of complexities of the naive AR and the IS based AR algorithms. Likewise, $\wh S_{\mathsf{DCM1}}$,  $\wh S_{\mathsf{DCM2}}$ and  $\wh S_{\mathsf{DCM3}}$ are the corresponding estimates for the three dominated CFTP methods $1$, $2$, and $3$ presented in Section~\ref{sec:dcftp}, respectively. \\

A standard software used for generating perfect samples of the hard-sphere model using the dominated CFTP is {\sf rHardcore()}, which is a part of {\sf R} package {\sf Spatstat} that is available at \href{https://spatstat.org/}{https://spatstat.org/}. Experiment~3 provides a perspective on the performance of the proposed method with respect to {\sf rHardcore()} by comparing their expected running times as a function of $\rbdd$ for a fixed $\lambda$. We note that {\sf rHardcore()} does not support the torus-hard-sphere model. However, when selected "expand=TRUE",  it reduces the boundary effects by generating a perfect sample on a larger window, and then clipping the result to the original window $[0,1]^2$.\\

\begin{figure}[H]
 \centering
\includegraphics[ width=0.8\textwidth, height = 0.4\linewidth]{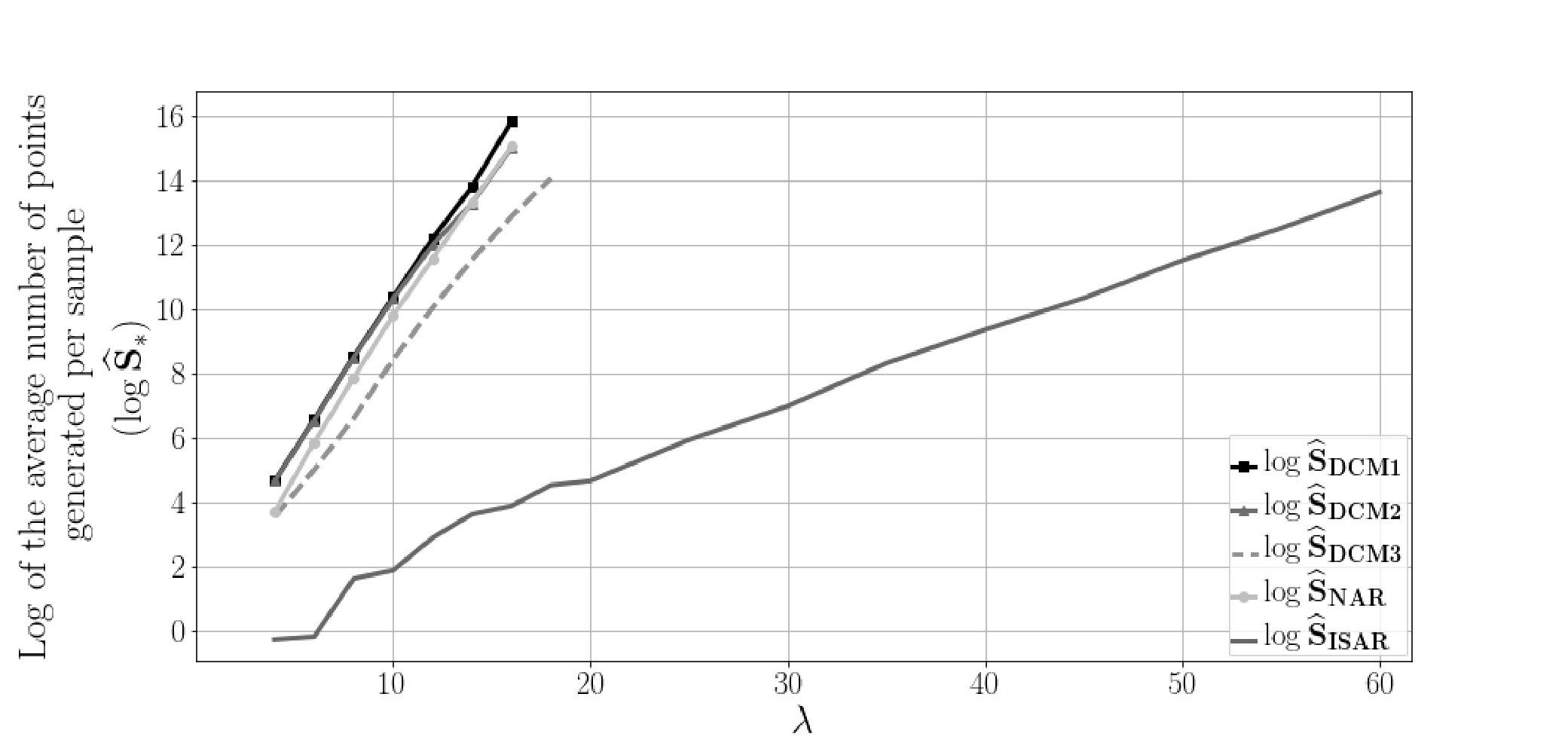}
\caption{Log of the expected number of points generated per a perfect sample of the hard-sphere model, as a function of $\lambda$, in the regime where $\eta = 0.5$, $d = 2$, and $\rbdd = 1$.}
\label{pic:Compare1}
\end{figure}

\noindent
{\bf Experiment 1:} In this experiment, we consider the high density regime.  Figure~\ref{pic:Compare1} compares the performance of all the algorithms for $\eta = 0.5$ (that is $2\eta = 1$) and  $\rbdd = 1$ ({this is identical to the regime where the underlying space is $[0, \sqrt{\lambda}]^2$ and the radius of each sphere is $\rbdd$}). 
This experiment suggests that the proposed IS based AR method can perform significantly better than every other method. To comprehend the rarity of the samples of the hard-sphere configurations under $\mu^0$, we plot $\log \pno$ in Figure~\ref{pic:exp1-plam} and the expected intensity of the hard-sphere model in Figure~\ref{pic:exp1-hcpts}.\\

\begin{figure}[H]
 \centering
\includegraphics[width=0.8\textwidth, height = 0.4\linewidth]{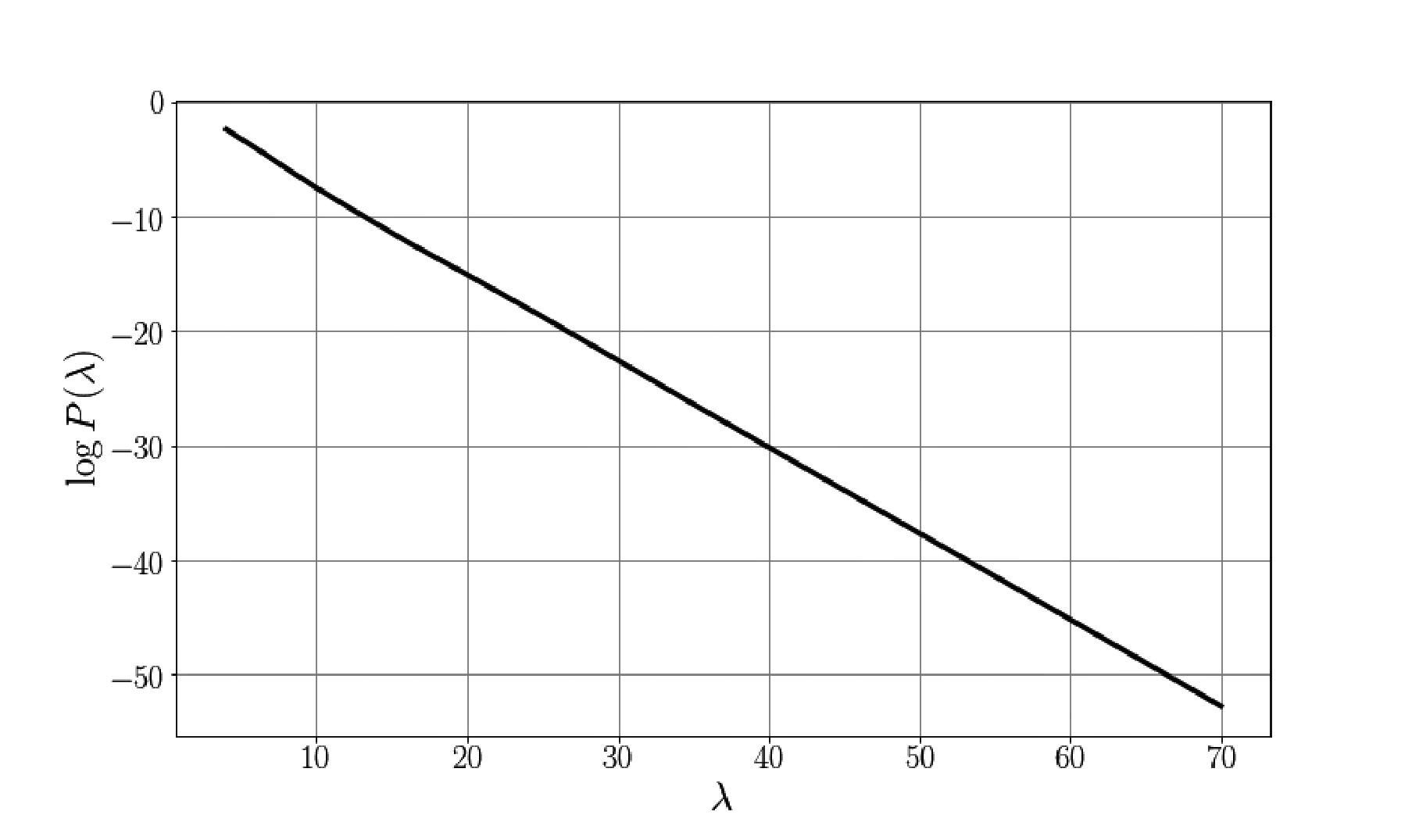}
\caption{$\log \pno$ vs $\lambda$ in the regime where $\eta = 0.5$, $d = 2$, and $\rbdd = 1$, where the non-overlapping probability $\pno$ is estimated using the conditional Monte Carlo method proposed in \cite{HMTK20}. The plot shows that, in the high density regime, the configurations with hard-spheres can be extremely rare under the measure~$\mu^0$.}
\label{pic:exp1-plam}
\end{figure}

Significance of the proposed IS method in the high density regime is more evident when $\eta = 0.25$ (that is, $2\eta = 0.5$) and $\rbdd = 1$. In this case, for values of $\lambda$ greater than $50$,  almost all the times all the dominated CFTP algorithms terminated without giving an output. In particular, 
the {\sf rHardcore} function terminated by producing  the error: {\em memory exhausted (limit reached?)}. On the other hand, the time taken (in secs) for generating $1000$ samples using the proposed method  are $0.13, 0.21, 68.94$ and $271.72$, when $\lambda$ values are $50, 100, 200, 300$ and $400$, respectively.\\

\begin{figure}[H]
 \centering
\includegraphics[ width=0.7\textwidth, height = 0.37\linewidth]{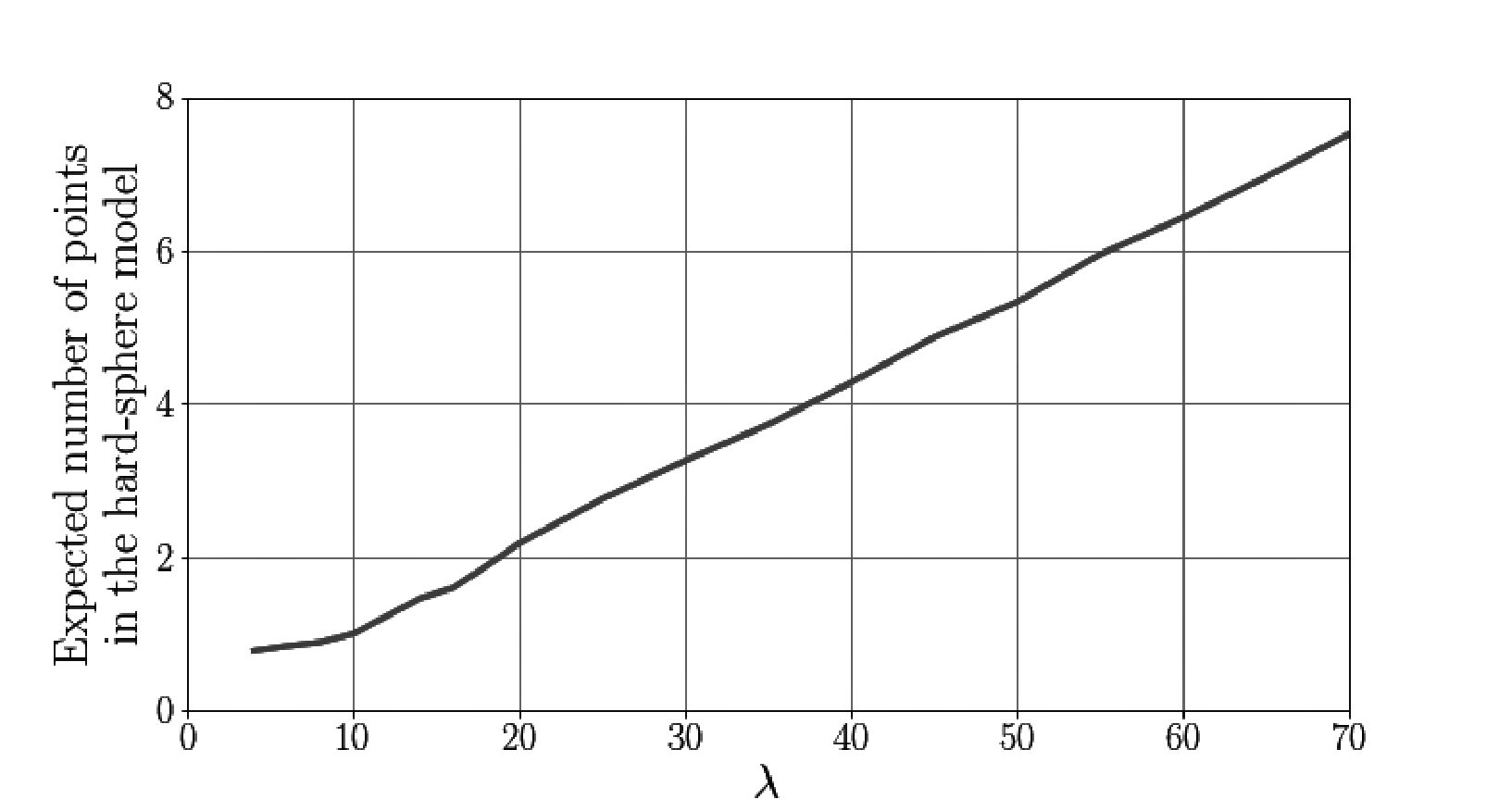}
\caption{The intensity of the hard-sphere model against $\lambda$ in the regime where $\eta = 0.5$, $d = 2$, and $\rbdd = 1$. Due to extreme rarity of the hard-sphere configurations under $\mu^{0}$ as shown in Figure~\ref{pic:exp1-plam}, the intensity of the hard-sphere model is much smaller than the intensity $\lambda$ of the PPP.}
\label{pic:exp1-hcpts}
\end{figure}

\noindent
{\bf Experiment 2:}  In this experiment, we consider the low density regime. Figure~\ref{pic:Compare2} compares the performances of all the methods for $2\eta  = 1.5$ and $\rbdd = 0.5$ to illustrate the case where $1 < \eta d < 2$. As we can see, for large values of $\lambda$, the dominated CFTP methods $2$ and $3$ perform better than the other methods, including the proposed method. Figure~\ref{pic:exp2-plam} is a plot of $\log \pno$ against $\lambda$ while Figure~\ref{pic:exp2-hcpts} is a plot of the intensity of the hard-sphere model against $\lambda$.

\begin{figure}[h]
 \centering
\includegraphics[ width=0.8\textwidth, height = 0.4\linewidth]{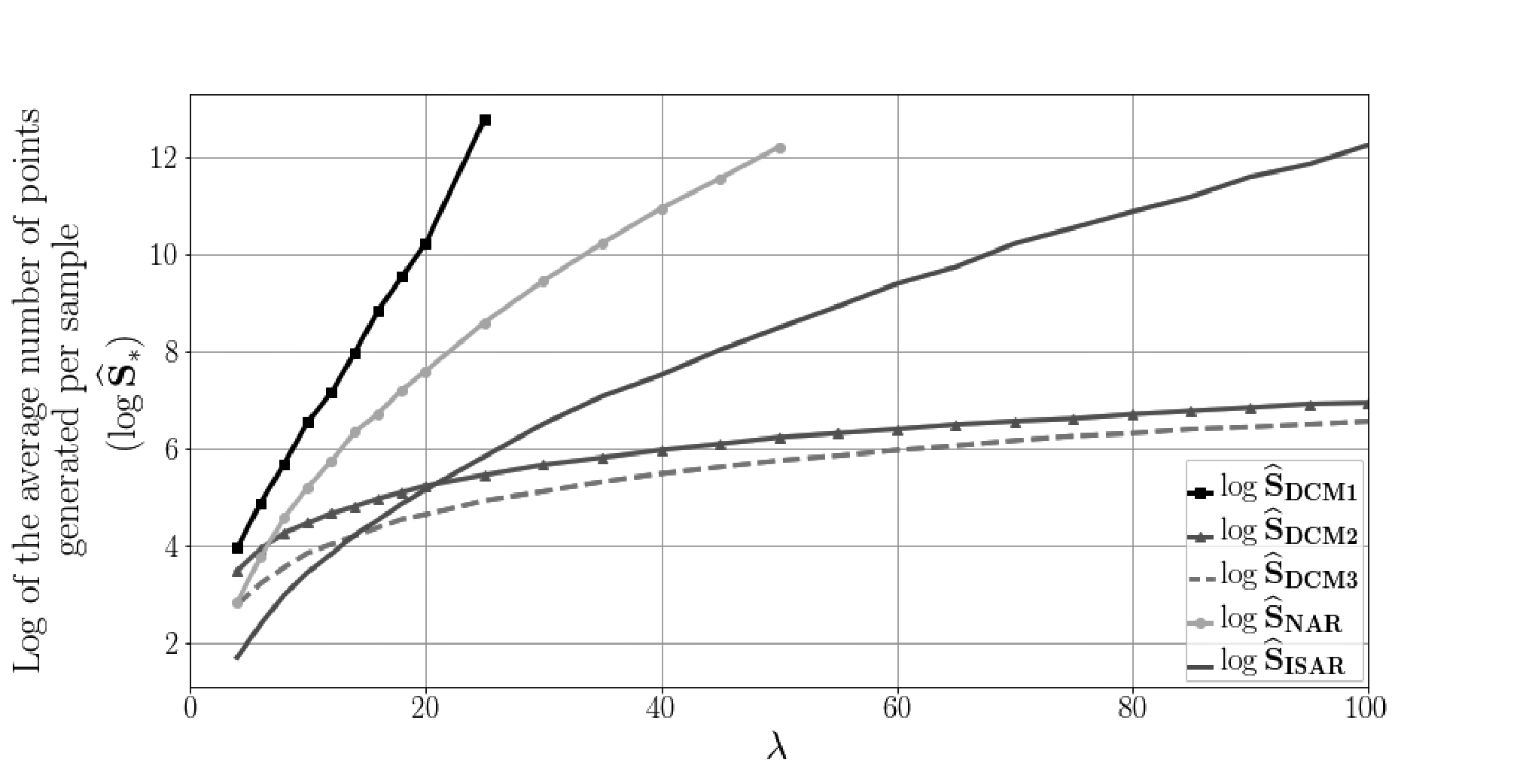}
\caption{Log of the expected number of points generated per a perfect sample of the hard-sphere model, as a function of $\lambda$, in the regime where $\eta = 0.75$, $d = 2$, and $\rbdd = 0.5$.}
\label{pic:Compare2}
\end{figure}
~
\begin{figure}[h]
 \centering
\includegraphics[ width=0.8\textwidth, height = 0.4\linewidth]{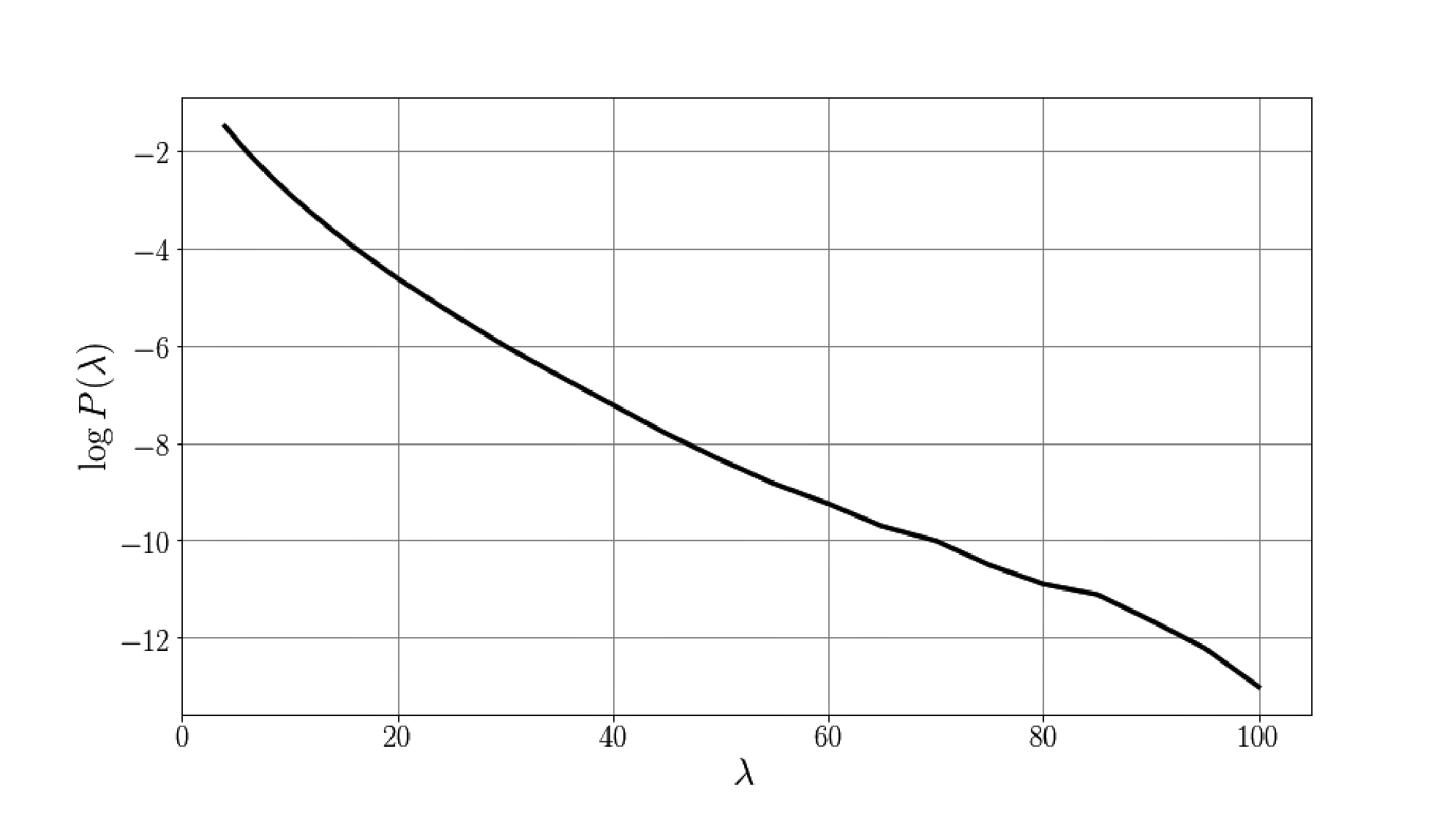}
\caption{$\log \pno$ vs $\lambda$ in the regime where $\eta = 0.75$, $d = 2$, and $\rbdd = 0.5$, where the non-overlapping probability $\pno$ is estimated using the conditional Monte Carlo method proposed in \cite{HMTK20}. Here we notice that the hard-sphere configurations are relatively less rare compared to the scenarios in Experiment~1.}
\label{pic:exp2-plam}
\end{figure}

\begin{figure}[h]
 \centering
\includegraphics[ width=0.8\textwidth, height = 0.45\linewidth]{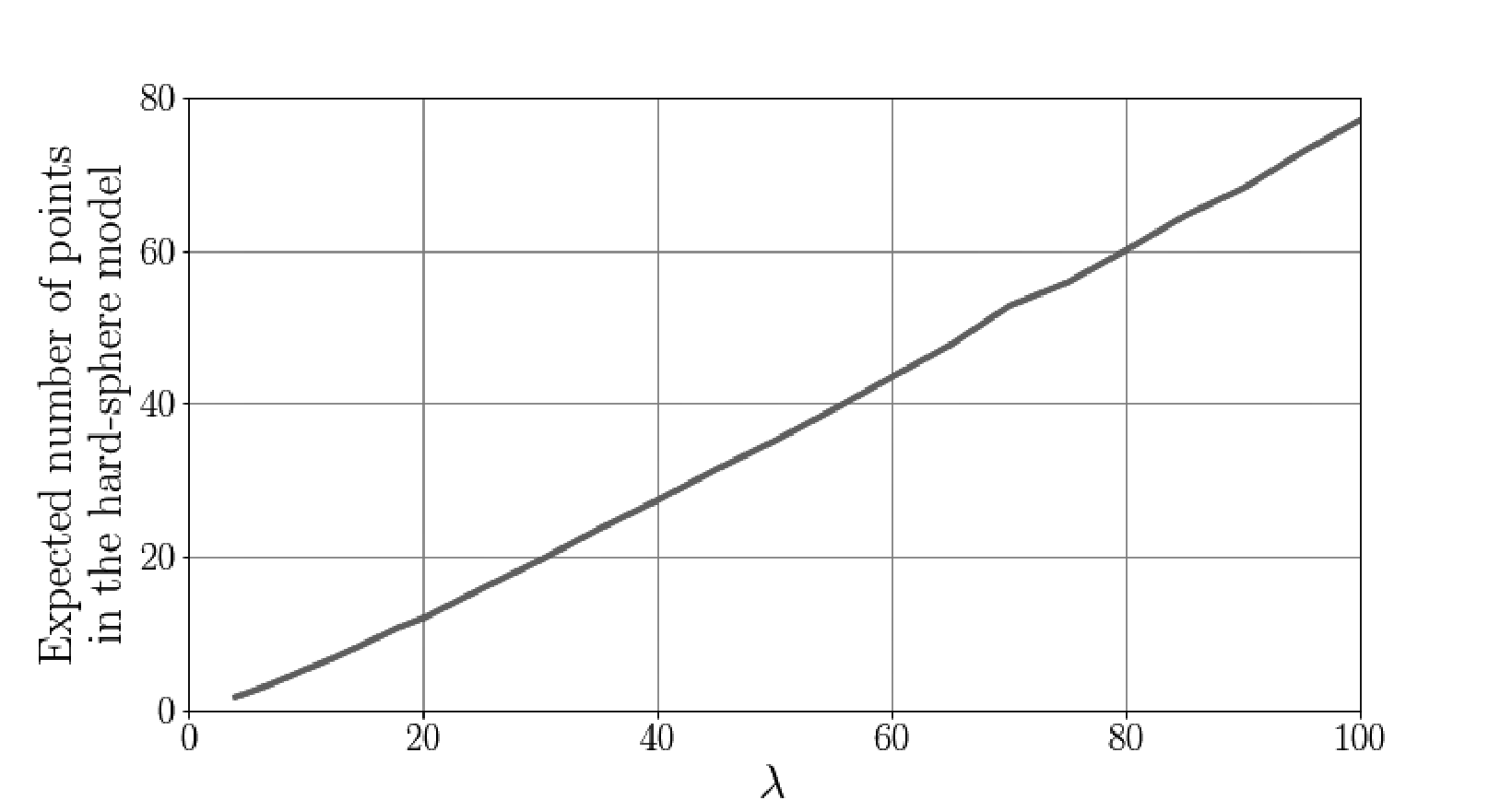}
\caption{The intensity of the hard-sphere model against $\lambda$ in the regime where $\eta = 0.75$, $d = 2$, and $\rbdd = 0.5$. Unlike in Experiment~1, the intensity of the hard-sphere model is relatively close to the intensity $\lambda$ of the PPP.}
\label{pic:exp2-hcpts}
\end{figure}

\noindent
{\bf Experiment 3} Figure~\ref{pic:Compare3} compares the running times  of the proposed IS based AR method and {\sf rHardcore()} for generating $1000$ samples. The same computer is used for running both the softwares. Here, we vary $\rbdd$ while fixing $\lambda = 50$ and $2\eta = 1$. Observe that for large values of $\rbdd$  the density is higher, and the proposed method performs far better than the dominated CFTP.  As we expect for this regime, as the radius $\overline r$ increasing, the intensity of the hard-sphere model is decreasing (Figure~\ref{pic:exp3-hcpts}) while the rarity of the hard-sphere configurations under $\mu^0$ is increasing (Figure~\ref{pic:exp3-plam}). 

\begin{figure}[h]
 \centering
\includegraphics[width=0.8\textwidth, height = 0.4\linewidth]{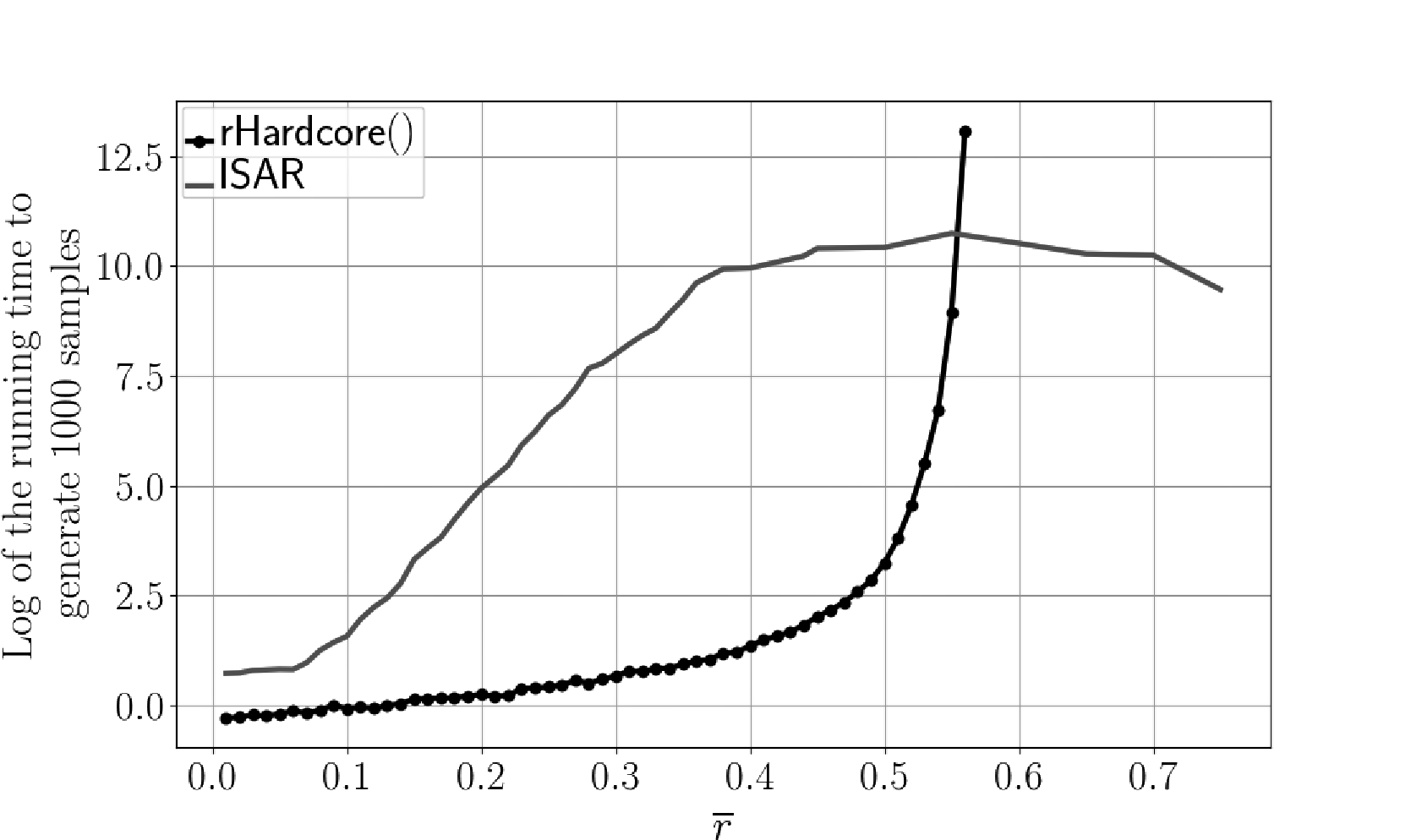}
\caption{Comparison  between the running times  of the proposed IS based AR method and {\sf rHardcore()} for generating $1000$ samples}
\label{pic:Compare3}
\end{figure}
~
\begin{figure}[H]
 \centering
\includegraphics[ width=0.8\textwidth, height = 0.4\linewidth]{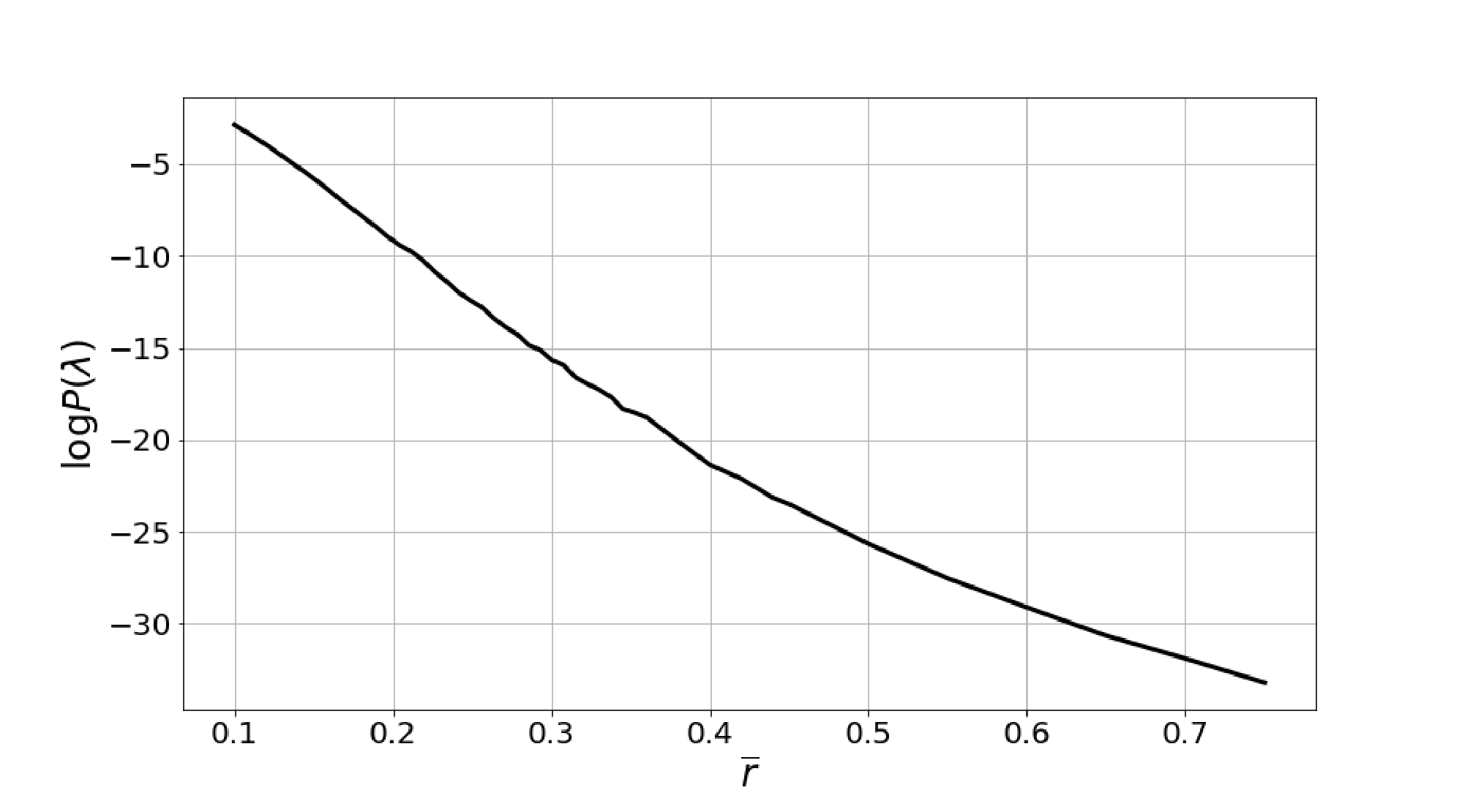}
\caption{$\log \pno$ vs $\rbdd$ in the regime where $\eta = 0.5$, $d = 2$, and $\lambda = 50$, where the non-overlapping probability $\pno$ is again estimated using the conditional Monte Carlo method.}
\label{pic:exp3-plam}
\end{figure}
~
\begin{figure}[H]
 \centering
\includegraphics[ width=0.8\textwidth, height = 0.4\linewidth]{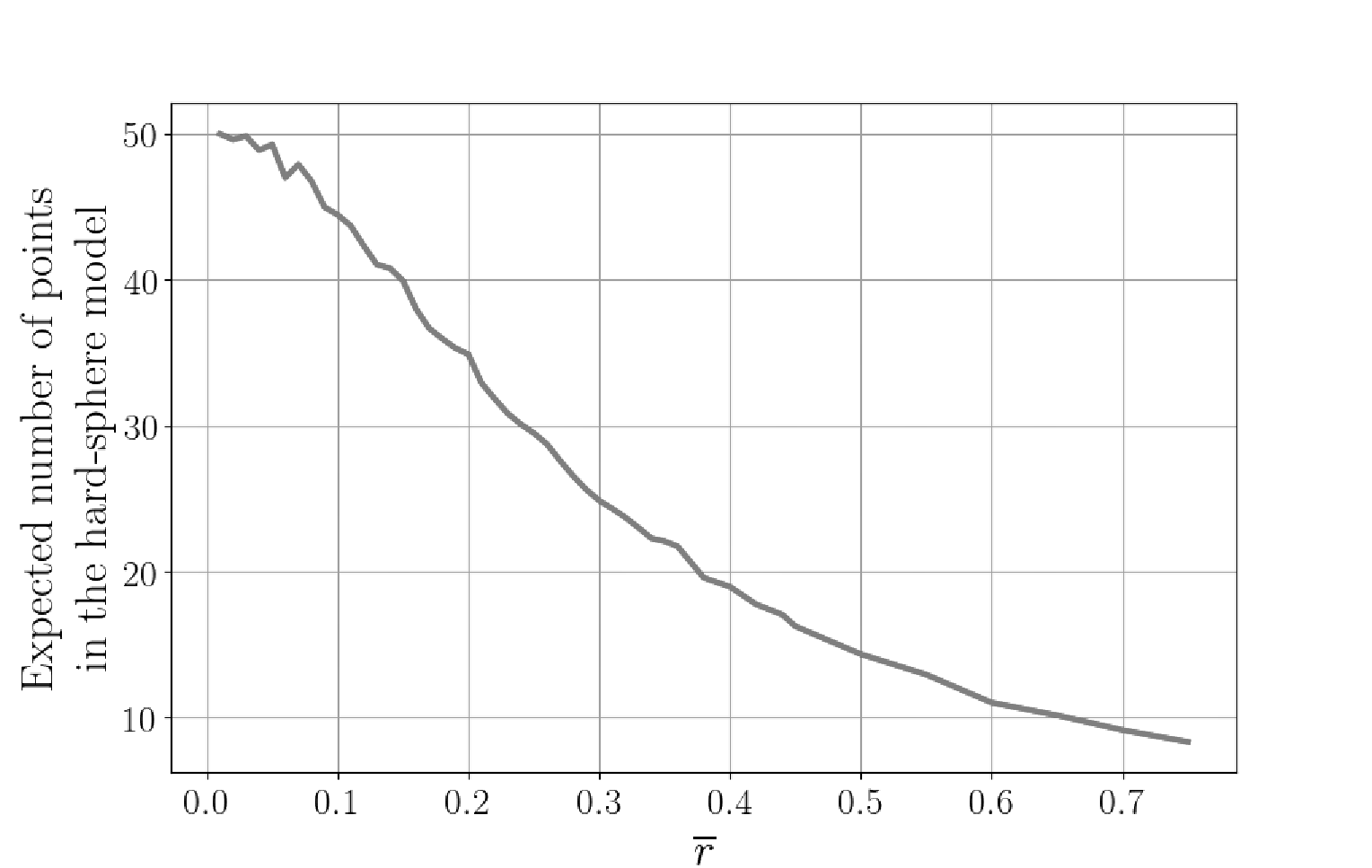}
\caption{Intensity of the hard-sphere model against $\overline{r}$ in the regime where $\eta = 0.5$, $d = 2$, and $\lambda = 50$. }
\label{pic:exp3-hcpts}
\end{figure}

\section{Conclusion}
\label{sec:ConAR}
In this paper we considered the problem of perfect sampling for Gibbs hard-sphere models on $[0,1]^d$.  We discussed the performance of the naive acceptance-rejection method and introduced importance sampling based enhancements to it. We also compared these methods to some of the popular coupling from the past based techniques prevalent in the existing literature. For the performance  analysis and comparison (of expected running time complexity), we developed an asymptotic regime where the intensity 
$\lambda$ of the reference Poisson point process increased to infinity, while the (random) volume of each sphere is an order of $\lambda^{- \eta d}$ decreased to zero, for different regimes of $\eta d > 0$.
One main conclusion is that while the dominated coupling from the past methods perform better for $1< \eta d < 2$ for large $\lambda$,
our importance sampling based methods provide a significant improvement for $\eta d \leq 1$. Enroute, we established large deviations results for the probability that spheres do not overlap with each other when their centers constitute a Poisson point process.
We also conducted numerical experiments to validate our asymptotic results. \\

The  proposed importance sampling based acceptance-rejection methods rely on clever partitioning of the underlying configuration space and arriving at an appropriate change of measure on each partition. While we showed how this may be effectively conducted for hard-sphere models, further research is needed to develop effective implementations for perfect sampling from a broad class of Gibbs point processes.

\section*{Acknowledgement}
SM acknowledges support of the Australian Research Council Centre of Excellence for Mathematical and Statistical Frontiers (ACEMS), under grant number CE140100049. SJ acknowledges support of the Department of Atomic Energy, Government of India, under project no. RTI4001. MM's research is partly funded by the NWO Gravitation project NETWORKS, grant number 024.002.003.

\begin{appendices}

\section{Proofs}
\label{Proofs}
The following lemmas are useful for proving Theorem~\ref{lem:non-ovr-rand}, Proposition~\ref{prop:AR_meth} and Proposition~\ref{prop:imp_samp}. Lemma~\ref{lemma:cherpoi} is a standard Chernoff bound for Poisson random variables and Lemma~\ref{lem:Hoeffding} is Hoeffding's inequality for $U$-statistics \cite{Hoe63}.

\begin{lemma}[Chernoff bound for Poisson]
\label{lemma:cherpoi}
Let $N \sim Poi(\lambda)$. Then, for  any $0 < \epsilon < 1$,
\[
 \pp\left( N \leq (1- \epsilon)\lambda \right) \leq \exp\lt( - \frac{\lambda \epsilon^2}{2}\rt)\,\,\, \text{  and  }\,\,\,\,\, \pp\left( N \geq (1 + \epsilon)\lambda \right) \leq \exp\lt( - \frac{\lambda \epsilon^2}{3} \rt).
\]
\end{lemma}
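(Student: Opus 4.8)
The plan is to run the standard exponential Chernoff argument, taking advantage of the fact that the moment generating function of $N \sim Poi(\lambda)$ is available in closed form, namely $\ee\lt[e^{tN}\rt] = \exp\lt(\lambda(e^t-1)\rt)$ for all $t \in \reals$, and then to reduce each of the two tail estimates to an elementary single-variable inequality.

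For the upper tail, I would fix $t>0$, apply Markov's inequality to the nonnegative random variable $e^{tN}$ to obtain $\pp(N \geq (1+\epsilon)\lambda) \leq e^{-t(1+\epsilon)\lambda}\ee\lt[e^{tN}\rt] = \exp\lt(\lambda(e^t - 1 - t(1+\epsilon))\rt)$, and then optimize the exponent over $t>0$; the minimizer is $t = \log(1+\epsilon) > 0$, which yields $\pp(N \geq (1+\epsilon)\lambda) \leq \exp\lt(\lambda(\epsilon - (1+\epsilon)\log(1+\epsilon))\rt)$. It then suffices to verify the scalar inequality $(1+\epsilon)\log(1+\epsilon) - \epsilon \geq \epsilon^2/3$ on $(0,1)$. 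I would prove this by setting $g(\epsilon) := (1+\epsilon)\log(1+\epsilon) - \epsilon - \epsilon^2/3$, observing that $g(0) = 0$ and $g'(\epsilon) = \log(1+\epsilon) - \tfrac{2}{3}\epsilon$ with $g'(0)=0$, and noting that $g''(\epsilon) = \tfrac{1}{1+\epsilon} - \tfrac{2}{3}$ changes sign exactly once on $(0,1)$ (from $+$ to $-$ at $\epsilon = 1/2$); hence $g'$ is increasing then decreasing on $[0,1]$, so its minimum over $[0,1]$ is attained at an endpoint, and since $g'(0)=0$ and $g'(1) = \log 2 - \tfrac{2}{3} > 0$ we get $g' \geq 0$ on $[0,1]$, whence $g$ is nondecreasing and $g \geq g(0) = 0$.

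For the lower tail, I would instead fix $t>0$ and apply Markov to $e^{-tN}$, giving $\pp(N \leq (1-\epsilon)\lambda) \leq e^{t(1-\epsilon)\lambda}\ee\lt[e^{-tN}\rt] = \exp\lt(\lambda(e^{-t} - 1 + t(1-\epsilon))\rt)$, whose exponent is minimized at $t = -\log(1-\epsilon) > 0$ (legitimate since $0<\epsilon<1$), producing $\pp(N \leq (1-\epsilon)\lambda) \leq \exp\lt(\lambda(-\epsilon - (1-\epsilon)\log(1-\epsilon))\rt)$. The remaining scalar claim is $-(1-\epsilon)\log(1-\epsilon) \leq \epsilon - \epsilon^2/2$, which I would handle by setting $h(\epsilon) := (1-\epsilon)\log(1-\epsilon) + \epsilon - \epsilon^2/2$ and checking $h(0) = 0$, $h'(\epsilon) = -\log(1-\epsilon) - \epsilon$ with $h'(0) = 0$, and $h''(\epsilon) = \tfrac{\epsilon}{1-\epsilon} \geq 0$ on $[0,1)$; thus $h'$ is nondecreasing from $0$, so $h' \geq 0$, so $h$ is nondecreasing, and $h \geq h(0) = 0$.

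I do not expect a genuine obstacle: this is the textbook Chernoff computation for the Poisson law, and both inequalities are of the "standard" type, so it would be acceptable simply to cite a concentration-inequalities reference and record the two short verifications above. The only mildly delicate point is the upper-tail scalar estimate, where $g'$ is not monotone on $(0,1)$, so the cleaner "convexity plus vanishing at $0$" shortcut that works for the lower tail does not apply directly; there one needs the unimodality-of-$g'$ together with the positive right endpoint value $g'(1) = \log 2 - \tfrac23 > 0$, and I would make sure to state that step explicitly rather than gloss over it.
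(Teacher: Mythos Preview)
Your argument is correct: the Chernoff/Cram\'er optimization together with the two elementary scalar inequalities you verify yields exactly the stated bounds, and your handling of the non-monotone $g'$ via unimodality plus the endpoint check $g'(1)=\log 2-\tfrac{2}{3}>0$ is the right way to close the upper-tail step. The paper itself does not prove this lemma at all; it simply introduces it as ``a standard Chernoff bound for Poisson random variables'' and invokes it repeatedly in the appendix, so your proposal supplies strictly more detail than the paper and follows the textbook route the authors are implicitly citing.
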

\begin{lemma}[Hoeffding, 1963]
\label{lem:Hoeffding}
 Suppose that $\xi_1, \xi_2, \dots, \xi_n$ are iid random variables and {$g:\reals^k \rightarrow [0,1]$ is a measurable function.}
 Set
 \[
 Z_n = \sum_{1 \leq i_1 < i_2< \dots < i_k \leq n} g\lt(\xi_{i_1}, \xi_{i_2}, \dots,\xi_{i_k}\rt)
 \] for a positive integer $k \leq n$ (this is known as {a $U$-statistics} of order $k$).
 Then, for any $\epsilon > 0$,
 \[
  \pp \lt(Z_n \geq {n \choose k} \Big(\ee[g(\xi_1,\xi_2,\dots,\xi_k)] + \epsilon\Big)\rt) \leq 2 \exp\lt(- 2 \lfloor n/k \rfloor \epsilon^2\rt).
 \]
 The same estimate holds for $\pp \lt(Z_n \leq {n \choose k} \Big(\ee[g(\xi_1,\xi_2,\dots,\xi_k)] - \epsilon\rt)\Big).$
\end{lemma}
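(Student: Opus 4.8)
The plan is to use Hoeffding's classical averaging device, which reduces a $U$-statistic tail bound to the ordinary Hoeffding inequality for a sum of independent bounded random variables. Write $m := \lfloor n/k \rfloor$ and $\theta := \ee[g(\xi_1,\dots,\xi_k)]$, and set $U_n := Y_n/{n \choose k}$, so the claim is equivalent to $\pp(U_n \ge \theta + \epsilon) \le 2\exp(-2m\epsilon^2)$ together with the analogous lower-tail statement. For each permutation $\pi$ of $\{1,\dots,n\}$, take the sequence $\pi(1),\dots,\pi(mk)$, split it into $m$ consecutive blocks of size $k$, sort the indices inside each block increasingly, and let $\Phi_\pi$ denote the average over these $m$ blocks of $g$ evaluated at the $\xi$'s indexed (increasingly) by that block. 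The first step is to verify the identity $U_n = \frac{1}{n!}\sum_{\pi}\Phi_\pi$: averaging any one fixed block position over all $n!$ permutations returns $\frac{1}{{n\choose k}}\sum_{i_1<\dots<i_k} g(\xi_{i_1},\dots,\xi_{i_k}) = U_n$, since the unordered index set of a fixed block position is a uniformly distributed $k$-subset of $\{1,\dots,n\}$; as this value is the same for each of the $m$ block positions, averaging over the blocks also returns $U_n$.

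Next I would note that, for every fixed $\pi$, the random variable $\Phi_\pi$ is an average of $m$ independent random variables taking values in $[0,1]$ each with mean $\theta$ (the blocks use pairwise disjoint index sets and the $\xi_i$ are i.i.d.), so Hoeffding's lemma gives $\ee[e^{t(\Phi_\pi-\theta)}] \le e^{t^2/(8m)}$ for all $t>0$. Using convexity of the exponential and Jensen's inequality, $\ee[e^{t(U_n-\theta)}] = \ee\lt[\exp\lt(\tfrac{t}{n!}\sum_\pi(\Phi_\pi-\theta)\rt)\rt] \le \frac{1}{n!}\sum_\pi \ee[e^{t(\Phi_\pi-\theta)}] \le e^{t^2/(8m)}$. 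A Chernoff bound then yields $\pp(U_n \ge \theta+\epsilon) \le \exp(-t\epsilon + t^2/(8m))$, and the choice $t = 4m\epsilon$ gives $\pp(U_n \ge \theta+\epsilon) \le \exp(-2m\epsilon^2)$, which is in particular at most $2\exp(-2\lfloor n/k\rfloor\epsilon^2)$; multiplying through by ${n\choose k}$ recovers the upper-tail bound. Applying the same chain of steps to the kernel $1-g$ (again $[0,1]$-valued, with mean $1-\theta$) gives the lower-tail bound.

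The only step requiring genuine care is the combinatorial identity $U_n = \frac{1}{n!}\sum_\pi\Phi_\pi$, and in particular the fact that $g$ is not assumed symmetric in its $k$ arguments; this is handled precisely by sorting the indices inside each block before evaluating $g$, so that every block contributes exactly $g$ evaluated on an increasing index tuple, in agreement with the definition of $Y_n$. Everything else is the textbook Hoeffding computation, and indeed the statement is exactly that of \cite{Hoe63}, so it would also be legitimate simply to cite that reference; the sketch above is included for completeness.
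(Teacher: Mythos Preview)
Your argument is correct and is precisely Hoeffding's original averaging device. Note, however, that the paper does not supply its own proof of this lemma: it is stated as a quoted result with attribution to Hoeffding (1963) and used as a black box, so there is nothing in the paper to compare your sketch against. Your closing remark that a citation alone would suffice matches exactly what the authors do.
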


\subsection{Proof of Theorem~\ref{lem:non-ovr-rand}}
\label{sec:LDResults}
Recall that $\lambda > 0$, $\eta > 0$, and  $\frac{R_1}{\lambda^\eta}, \dots, \frac{R_n}{\lambda^\eta}$ are the radii of $n$ spheres whose respective centers $Y_1, \dots, Y_n$ are independently and uniformly generated
on the $d$-dimensional unit cube $[0,1]^d$, where $R_1, \dots, R_n$ are iid positive random variables bounded from above by a constant $\rbdd$ and are independent of $Y_1, \dots, Y_n$. Define {$m_i := \ee\lt[(R_1 + R_2)^{id}\rt]$, for all $i  = 1, 2, \dots$.}
Let $\pnon$ be the probability that these $n$ spheres do not overlap with each other. Since the number of spheres in a $\lambda$-homogeneous marked PPP on $[0,1]^d$ is a Poisson random variable with mean $\lambda$,
the non-overlapping probability
\begin{align}
\label{eqn:pno_lam}
 \pno = \ee\lt[\mathcal{P}_N(\lambda)\rt],
\end{align}
where $N \sim \pois(\lambda)$.

In proving the theorem, we use the following lemmas which exploits the reference IS measure  $\wt \mu_n$ introduced in Section~\ref{sec:AR_GSC}.
By \eqref{eqn:LHRatio1} and the definition of $\pnon$,
\begin{align}
\label{eqn:PnBi}
\pnon &= \pp_{\mu^0} \lt(\state \in \mathscr{A} \big| |\state | = n \rt)\nonumber\\
      &= {\mbb E}_{\wt \mu_n}\left[I\lt(\state \in \mathscr{A} \rt)\prod_{i=1}^{n}\Big(1 - B_{i}\Big)\right]\nonumber\\
      &= {\mbb E}_{\wt \mu_n}\left[\prod_{i=1}^{n}\Big(1 - B_{i}\Big)\right].
\end{align}
The following bound holds  trivially,
\begin{equation}
\label{eqn:bni-ran}
B_{i} \leq \frac{\gamma}{\lambda^{\eta d}}\sum_{j=1}^{i-1}\left(R_j + R_i\right)^d,
\end{equation}
where the sum is taken to be zero when $i = 1$.
Let $\theta_{n,\lambda}  = \frac{\gamma (2\rbdd)^d\, n}{\lambda^{\eta d}}$. We have the following upper and lower bounds on $\pnon$. 
\begin{lemma}
\label{lem:binomial}
Under the above set-up,
 \begin{align}
 \label{eqn:pnon_low}
  \pnon \geq \exp\lt( - n \sum_{j = 1}^\infty \lt(\frac{\gamma n}{\lambda^{\eta d}}\rt)^j \frac{m_j}{j(j+1)}\rt).
 \end{align}
Furthermore, for any $\epsilon > 0$,
\begin{align}
  \label{eqn:pnon_up}
  \pnon \leq N_{n,\lambda} \lt[ \exp\left( - \frac{\gamma n(n-1)(m_1 - \epsilon)}{2\lambda^{\eta d}} \right) + 2\exp\left( - \frac{(n-1)\epsilon^2}{(2\rbdd)^{2d}}\right)\rt],
 \end{align}
 for any $n$ and $\lambda$ such that $\theta_{n, \lambda} < 1$, where $N_{n, \lambda}$ is a function of $n, \lambda$ and $\rbdd$, defined by \eqref{eqn:Nnlam} in the proof below, such that  
\begin{align}
  \lim_{\lambda \rightarrow \infty }\frac{1}{\lambda^{2 -\eta d}} \log N_{\lambda, \lambda} &= 0,\, \text{ if }\, \eta d > 1.\label{eqn:Kr-term22}
 \end{align}
 In particular, for the torus-hard-sphere model,
 \begin{align}
  \lim_{\lambda \rightarrow \infty } N_{\lambda, \lambda} &= 1,\, \text{ if }\, \eta d > 3/2.\label{eqn:Kr-term2}
 \end{align}
\end{lemma}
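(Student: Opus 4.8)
\emph{Proof plan.} Both inequalities are read off from the importance sampling identity \eqref{eqn:PnBi}, $\pnon=\mathbb{E}_{\wt\mu}\big[\prod_{i=1}^n(1-B_i)\big]$, where $B_i$ is the blocking volume seen by the $i$-th sphere under $\wt\mu$ and the radii $R_1,\dots,R_n$ are still i.i.d.\ under $\wt\mu$.

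For the \textbf{lower bound} \eqref{eqn:pnon_low}, use $1-x=\exp(-\sum_{k\ge1}x^k/k)$ on $[0,1)$ (with value $+\infty$ in the exponent at $x=1$), so that $\prod_i(1-B_i)=\exp\big(-\sum_{i=1}^n\sum_{k\ge1}B_i^k/k\big)$ pointwise, and apply Jensen's inequality to get $\pnon\ge\exp\big(-\sum_{i=1}^n\sum_{k\ge1}\mathbb{E}_{\wt\mu}[B_i^k]/k\big)$. The deterministic estimate \eqref{eqn:bni-ran}, $B_i\le\frac{\gamma}{\lambda^{\eta d}}\sum_{j<i}(R_j+R_i)^d$, together with the convexity bound $(\sum_{j<i}a_j)^k\le(i-1)^{k-1}\sum_{j<i}a_j^k$ and the fact that $\mathbb{E}[(R_j+R_i)^{dk}]=m_k$ by i.i.d.\ radii and \eqref{eqn:m1}, gives $\mathbb{E}_{\wt\mu}[B_i^k]\le\big(\tfrac{\gamma(i-1)}{\lambda^{\eta d}}\big)^k m_k$. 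Summing over $i$ with $\sum_{i=1}^n(i-1)^k\le n^{k+1}/(k+1)$ collapses the double sum to $n\sum_{k\ge1}(\gamma n/\lambda^{\eta d})^k\frac{m_k}{k(k+1)}$, which is \eqref{eqn:pnon_low} (the bound being vacuous when the series diverges).

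For the \textbf{upper bound} \eqref{eqn:pnon_up}, start from $\prod_i(1-B_i)\le e^{-\sum_i B_i}$, so $\pnon\le\mathbb{E}_{\wt\mu}[e^{-\sum_i B_i}]$, and lower-bound $\sum_iB_i$. Writing $B_i=\int_{\mathbf G}\mathbf 1\{N_i(x)\ge1\}\,dx$, where $N_i(x)$ counts the earlier centers $X_j$ ($j<i$) whose forbidden ball of radius $(R_i+R_j)/\lambda^\eta$ contains $x$, the elementary inequality $\mathbf 1\{m\ge1\}\ge m-\binom m2$ yields
\[
\sum_{i=1}^n B_i\ \ge\ \frac{\gamma}{\lambda^{\eta d}}\,G_n-\Xi_n,\qquad G_n:=\sum_{1\le j<i\le n}(R_j+R_i)^d,
\]
for the torus model (for the Euclidean model the first term is a slightly smaller quantity, the difference being a boundary correction absorbed below), where $\Xi_n=\sum_i\sum_{j<k<i}|B(X_j,\cdot)\cap B(X_k,\cdot)\cap\mathbf G|\le\frac{\gamma(2r)^d n}{\lambda^{\eta d}}W_n$ with $W_n:=\sum_{j<k\le n}\mathbf 1\{d(X_j,X_k)<4r/\lambda^\eta\}$ the number of close center pairs. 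Since $G_n/(2r)^d$ is a $U$-statistic of order $2$ of the i.i.d.\ radii with kernel mean $m_1/(2r)^d$, Lemma~\ref{lem:Hoeffding} gives $\mathbb{P}(E^{\mathsf c})\le2\exp(-(n-1)\epsilon^2/(2r)^{2d})$ for the event $E:=\{G_n>\tfrac{n(n-1)}{2}(m_1-\epsilon)\}$, which depends only on the radii and hence has the same probability under $\wt\mu$ and $\mu^0$. Splitting $\pnon\le\mathbb{E}_{\wt\mu}[e^{-\sum_iB_i}\mathbf 1_E]+\mathbb{P}(E^{\mathsf c})$ and bounding the first term on $E$ by $e^{-\gamma n(n-1)(m_1-\epsilon)/(2\lambda^{\eta d})}\,\mathbb{E}_{\wt\mu}[e^{\Xi_n}]$ yields \eqref{eqn:pnon_up} upon setting $N_{n,\lambda}:=\max\big(1,\mathbb{E}_{\wt\mu}[e^{\Xi_n}]\big)$, which is finite since $\Xi_n$ is bounded; the hypothesis $\theta_{n,\lambda}<1$ guarantees every $B_i$ stays below $1$, so the fallback rule of $\wt\mu$ is never invoked.

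It remains to establish \eqref{eqn:Kr-term22} and \eqref{eqn:Kr-term2}, which concern only the diagonal $n=\lambda$. When $\eta d>1$ one has $\theta_{\lambda,\lambda}=\gamma(2r)^d\lambda^{1-\eta d}\to0$, so for large $\lambda$ every blocking volume is at most $\tfrac12$, the non-blocking region always has volume at least $\tfrac12$, and hence, conditionally on the past, the $k$-th center lands within $4r/\lambda^\eta$ of any fixed earlier center with probability $O(\lambda^{-\eta d})$. This should let one dominate $W_\lambda$ by a sum of mean $O(\lambda^{2-\eta d})$ and control its moment generating function at the (vanishing) parameter $\gamma(2r)^d\lambda^{1-\eta d}$ appearing in $\Xi_\lambda$, giving $\log N_{\lambda,\lambda}=O(\lambda^{3-2\eta d})=o(\lambda^{2-\eta d})$ because $\eta d>1$; and when $\eta d>3/2$ one has $\lambda^{3-2\eta d}\to0$, so $N_{\lambda,\lambda}\to1$ for the torus model (for the Euclidean model there is in addition an $O(\lambda^{2-\eta d-\eta})$ boundary term in the exponent, which is $o(\lambda^{2-\eta d})$ but does not vanish, which is why \eqref{eqn:Kr-term2} is stated only for the torus). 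The main obstacle is precisely this last step: under $\wt\mu$ the centers are generated sequentially and are not i.i.d.\ uniform, and — since the radii may be arbitrarily small — many forbidden balls may overlap, so one must argue carefully that $W_\lambda$ is nonetheless stochastically no larger than its i.i.d.-uniform analogue and that its MGF at a vanishing parameter is subexponential at rate $\lambda^{2-\eta d}$; the remaining steps are routine.
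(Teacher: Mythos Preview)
Your lower-bound argument is identical to the paper's.

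For the upper bound you take a genuinely different route from the paper, and the gap you flag at the end is real and not easily closed as stated. Your correction term is governed by the close-pair count $W_n=\sum_{j<k}\mathbf 1\{d(X_j,X_k)<4r/\lambda^\eta\}$, and you need $\mathbb E_{\wt\mu}\bigl[e^{\theta_{n,\lambda}W_n}\bigr]$. The difficulty is that, conditionally on $\mathcal F_{k-1}$, the indicators $\{I_{jk}\}_{j<k}$ are all determined by the single random point $X_k$, so they can be perfectly correlated: if the first $k-1$ centers happen to be packed into a ball of radius $4r/\lambda^\eta$ (which $\wt\mu$ does not forbid when the radii are small), then $Z_k\in\{0,k-1\}$ and $\mathbb E[e^{\theta Z_k}\mid\mathcal F_{k-1}]\approx 1+p\,e^{(k-1)\theta}$; for $k\sim\lambda$ and $\theta\sim\lambda^{1-\eta d}$ the exponent $(k-1)\theta\sim\lambda^{2-\eta d}$ blows up, so iterated conditioning fails to close. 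Comparing $W_\lambda$ to its i.i.d.-uniform analogue does not help either, since the same obstruction (correlation through the shared endpoint) is present there; a second-order $U$-statistic of the centers has no clean binomial-type MGF bound.

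The paper avoids this by trading the $\binom n2$-term pair count for an $n$-term ``deficient sphere'' count. After relabelling the spheres in increasing order of radius, it writes $B_i$ exactly as $\sum_{j<i}(\wt B_i(j)-\wt B_i(j-1))$ and calls $j\in\bar{\mathscr N}^{(i)}$ when the $j$-th increment falls short of the full ball volume $\gamma(R_{(j)}+R_{(i)})^d/\lambda^{\eta d}$; each such shortfall is then discarded entirely, at a cost of at most $\gamma(2r)^d/\lambda^{\eta d}$ per deficient index. The radius ordering makes $|\bar{\mathscr N}^{(i)}|$ nondecreasing in $i$, so the total penalty is at most $\theta_{n,\lambda}\,|\bar{\mathscr N}^{(n)}|$. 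Crucially, whether $j\in\bar{\mathscr N}^{(n)}$ is decided, given $\mathcal F_{j-1}$ and the radii, by the position of $X_j$ alone: it happens only if $X_j$ lands within distance $4r/\lambda^\eta$ of some earlier center (or, in the Euclidean model, near the boundary), and under $\wt\mu$ this conditional probability is uniformly at most $\bar q_{n,\lambda}=O(\theta_{n,\lambda})/(1-\theta_{n,\lambda})$. Iterated conditioning then gives the clean binomial-MGF bound $N_{n,\lambda}=(1+\bar q_{n,\lambda}(e^{\theta_{n,\lambda}}-1))^n$, from which \eqref{eqn:Kr-term22} and \eqref{eqn:Kr-term2} follow by the arithmetic you already sketched. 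The Hoeffding step on $Y_n=G_n/(2r)^d$ is then exactly yours.

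In your language, the repair is to replace the Bonferroni lower bound on $|\bigcup_{j<i}A_j^{(i)}|$ by the exact telescoping $|\bigcup_{j<i}A_j^{(i)}|=\sum_{j<i}|A_j^{(i)}\setminus\bigcup_{l<j}A_l^{(i)}|$ and bound each deficit $|A_j^{(i)}|-|A_j^{(i)}\setminus\bigcup_{l<j}A_l^{(i)}|\le|A_j^{(i)}|\cdot D_j$, where $D_j:=\mathbf 1\{\exists\,l<j:\ d(X_j,X_l)<4r/\lambda^\eta\text{ or }X_j\text{ near boundary}\}$. This gives a correction $\theta_{n,\lambda}\sum_j D_j$ in place of your $\theta_{n,\lambda}W_n$; since $\sum_j D_j\le W_n$ this is strictly sharper, and more importantly $\sum_j D_j$ has the sequential conditional-probability structure needed for a binomial MGF bound.
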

\begin{proof}
{\bf Lower Bound:} To prove \eqref{eqn:pnon_low} notice that,  by \eqref{eqn:PnBi},
\begin{align*}
 \pnon  = {\mbb E}_{\wt \mu_n}\left[\exp\lt( \sum_{i=1}^{n}\log \Big(1 - B_{i}\Big)\rt)\right]
        = {\mbb E}_{\wt \mu_n}\left[\exp\lt( \sum_{i=1}^{n}\sum_{j = 1}^\infty - \frac{1}{j}B_{i}^j\rt)\right],
\end{align*}
{using} the Taylor's expansion $\log(1 - x) = - \sum_{j =1}^\infty x^j/j$ for $0 \leq x \leq 1$. By Jensen's inequality and \eqref{eqn:bni-ran}, 
\begin{align*}
\pnon  &\ge \exp\left( - \sum_{i=1}^{n}\sum_{j = 1}^\infty \frac{1}{j}{\mbb E}_{\wt \mu_n}\left[B_{i}^j\right]\right)\nonumber\\
       &\geq \exp\left( - \sum_{i=1}^{n}\sum_{j = 1}^\infty \frac{\gamma^j\, }{j\lambda^{j\eta d}}{\mbb E}\left[\lt(\sum_{l = 1}^{i-1} \lt(R_l + R_{i}\rt)^{d}\rt)^j\right]\right)\nonumber\\
       &{\geq \exp\left( - \sum_{i=1}^{n}\sum_{j = 1}^\infty \frac{\gamma^j\, (i-1)^j}{j\lambda^{j\eta d}}{\mbb E}\left[\lt(\frac{1}{i-1}\sum_{l = 1}^{i-1} \lt(R_l + R_{i}\rt)^{d}\rt)^j\right]\right).}\nonumber
\end{align*}
Again by Jensen's inequality, $\lt(\frac{1}{i -1}\sum_{l = 1}^{i-1} \lt(R_l + R_{i}\rt)^{d}\rt)^j \leq \frac{1}{i-1}\lt(\sum_{l = 1}^{i-1} \lt(R_l + R_{i}\rt)^{jd}\rt)$,
and thus $$\pnon  \ge \exp\left( - \sum_{j = 1}^\infty \frac{\gamma^j m_j}{j\lambda^{j\eta d}}\sum_{i=1}^{n}(i-1)^j\right).$$
We establish \eqref{eqn:pnon_low} using $\sum_{i=1}^{n} (i-1)^j \leq \int_{x=0}^n x^j\, dx= \frac{n^{j+1}}{j+1}$.\\

\noindent
{\bf Upper Bound:}
Let $R_{(1)}, R_{(2)}, \dots, R_{(n)}$ be the order statistics of $R_1, R_2, \dots, R_n$. Since the non-overlapping probability $\pnon$ is independent of the order in which the spheres are generated, without loss of generality assume that the $i^{th}$ sphere has radius $R_{(i)}$.
Let, for each $1 \leq j \leq i -1$, $\wt{B}_i(j)$ be the volume of the blocked region for the $i^{th}$ sphere generation when the $(j+1)^{th}, (j+2)^{th}, \dots, (i -1)^{th}$ spheres are ignored,
where $\wt{B}_i(0) = 0$. We can think of $\wt{B}_i(j) - \wt{B}_i({j-1})$ as the blocking volume contributed by the $j^{th}$ sphere for the $i^{th}$ sphere.
Under the new measure $\wt \mu_n$, the blocking volume seen by the $i^{th}$ sphere, $\displaystyle B_{i} = \sum_{j = 1}^{i-1} \lt(\wt{B}_i(j) - \wt{B}_i({j-1})\rt).$
Consider the sets 

$$\displaystyle {\mathscr N}^{(i)} := \left\{ j \in \{1, 2,\dots, i - 1\}: \wt{B}_i(j) - \wt{B}_i({j-1}) =  \frac{\gamma}{\lambda^{\eta d}} \Big(R_{(j)} + R_{(i)}\Big)^d\right\},$$ for $ i \leq n$ and take $\bar{\mathscr N}^{(i)} := \{1,2,\dots, i -1\} \setminus{\mathscr N}^{(i)}$.
Using the inequality $1 - x \leq e^{-x}$ and \eqref{eqn:PnBi},
\begin{align*}
\pnon  &\leq \ee_{\wt \mu_n}\left[  \exp\left(-\sum_{i=1}^{n} B_{i}\right)\right]
    = \ee_{\wt \mu_n}\left[  \exp\left(-\sum_{i=1}^{n} \sum_{j = 1}^{i-1} \lt(\wt{B}_{i}( j) - \wt{B}_{i}( j-1)\rt) \right)\right]\nonumber\\
    &\leq \ee_{\wt \mu_n}\left[  \exp\left(-\frac{\gamma}{\lambda^{\eta d}}\sum_{i=1}^{n} \sum_{j \in {\mathscr N}^{(i)}} \Big(R_{(j)} + R_{(i)}\Big)^d \right)\right]\nonumber\\
    &= \ee_{\wt \mu_n}\left[  \exp\left( - \frac{\gamma\,(2\rbdd)^d}{\lambda^{\eta d}}Z_n +\frac{\gamma }{\lambda^{\eta d}}\sum_{i=1}^{n} \sum_{j \in \bar{\mathscr N}^{(i)}} \Big(R_{(j)} + R_{(i)}\Big)^d \right)\right]\nonumber\\
    &\leq \ee_{\wt \mu_n}\left[  \exp\left( - \frac{\gamma\,(2\rbdd)^d}{\lambda^{\eta d}}Z_n +\frac{\gamma (2\rbdd)^d}{\lambda^{\eta d}}\sum_{i=1}^{n} |\bar{\mathscr N}^{(i)}| \right)\right],\nonumber
\end{align*}
where $Z_n = \sum_{i =1}^{n} \sum_{j = 1}^{i-1} \lt(\frac{R_{(j)} + R_{(i)}}{2\rbdd}\rt)^d,$
and the last inequality holds from the assumption that each $R_i \leq \rbdd$.
Since $R_{(i)}$ is non-decreasing with $i$, from the definition of ${\mathscr N}^{(i)}$, it is easy to see that $|\bar{\mathscr N}^{(i)}|$ is {non-decreasing} with $i$. Therefore, 
\begin{align*}
\pnon  &\leq \ee_{\wt \mu_n}\left[  \exp\left( - \frac{\gamma\,(2\rbdd)^d}{\lambda^{\eta d}}Z_n +\theta_{n, \lambda} |\bar{\mathscr N}^{(n)}| \right)\right]\\
           &= \ee_{\wt \mu_n}\left[  \exp\left( - \frac{\gamma\,(2\rbdd)^d}{\lambda^{\eta d}}Z_n \right)\ee_{\wt \mu_n}\left[ \exp\left(\theta_{n, \lambda} |\bar{\mathscr N}^{(n)}| \right) \Big| R_1, \dots, R_n \right] \right].
\end{align*}
We now show that $|\bar{\mathscr N}^{(n)}|$ is stochastically bounded by a binomial random variable, and as a consequence, the conditional expectation $\ee_{\wt \mu_n}\left[ \exp\left(\theta_{n, \lambda} |\bar{\mathscr N}^{(n)}| \right) \Big| R_1, \dots, R_n \right] $ is uniformly bounded from above by a constant, which is {a function} of $n, \lambda$ and $\rbdd$.
Let 
$$q_{j} = \pp_{\wt \mu_n}\left(\wt B_{n}(j) - \wt B_{n}(j-1) < \frac{\gamma }{\lambda^{\eta d}}(R_{(j)} + R_{(n)})^d\right).$$
Clearly, $q_{j}$ is increasing with $j$, and therefore $q_j \leq q_{n -1}$ for all $j \leq n-1$.
This implies that $|\bar{\mathscr N}^{(n)}|$ is stochastically bounded by a binomial random variable with parameters $n$ and $q_{n-1}$, and thus
\begin{align*}
\ee_{\wt \mu_n}\left[ \exp\left(\theta_{n, \lambda} |\bar{\mathscr N}^{(n)}| \right) \Big| R_1, \dots, R_n \right]  &\leq \Big( q_{n-1} \exp\lt(\theta_{n, \lambda}\rt) + (1 -  q_{n-1}) \Big)^n.
\end{align*}  
Due to the boundary effect, $q_{n-1}$ is not the same for the torus model (where boundary spheres loop over to the opposite boundaries) and the Euclidean model. 
Observe that, for the Euclidean model, $\wt B_{n}(n-1) - \wt B_{n}(n-2) < \frac{\gamma(R_{(j)} + R_{(n)})^d}{\lambda^{\eta d}}$ if either 
\begin{itemize}
\item[(1)] the center of the $(n -1)^{th}$ sphere is within $ (R_{(j)} + R_{(n-1)} + 2R_{(n)})/\lambda^\eta$ distance from the center of $j^{th}$ sphere for some $j \leq n-2$, or 
\item[(2)] the center of $(n-1)^{th}$ sphere is within $(R_{(n-1)} + R_{(n)})/\lambda^\eta$ distance form the boundary of the unit cube. 
\end{itemize}
Note that the boundary event (2) is irrelevant for the torus-hard-sphere model. 
The probability of the event (1) is maximized by $\displaystyle \frac{\gamma}{\lambda^{\eta d}} \sum_{j = 1}^{n-2}\frac{\lt(R_{(j)} + R_{(n-1)} + 2R_{(n)} \rt)^d}{1 - B_{n-1}}$, while that for the event (2) is maximized by $\displaystyle\frac{1 - \lt(1 - 2(R_{(n-1)} + R_{(n)})/\lambda^\eta \rt)^d}{1 - B_{n-1}}$.
{Since the} $R_{i}'$s are bounded from above by $\rbdd$ and $B_{n-1} \leq \theta_{n, \lambda}$ (from \eqref{eqn:bni-ran}), we have
\begin{align*}
 q_{n-1} \leq \bar q_{n,\lambda} := \begin{cases} 
             \frac{2^d \theta_{n,\lambda}}{1 - \theta_{n,\lambda}} + \frac{c}{\lambda^\eta (1 - \theta_{n,\lambda})}  &\quad \text{ for the Euclidean model},  \\
             \frac{2^d \theta_{n,\lambda}}{1 - \theta_{n,\lambda}}  &\quad \text{ for the torus model},  \\
\end{cases} 
\end{align*}
for any $n$ and $\lambda$ such that $\theta_{n,\lambda} < 1$, and for some constant $c$. Let 
\begin{align}
N_{n, \lambda} = \Big( 1 + \bar q_{n,\lambda} \Big(\exp\lt(\theta_{n, \lambda}\rt) - 1\Big) \Big)^n,
\label{eqn:Nnlam}
\end{align}
 then $\pnon  \leq N_{n,\lambda}\, \ee_{\wt \mu_n}\left[  \exp\left( - \frac{\gamma\,(2\rbdd)^d}{\lambda^{\eta d}}Z_n \right)\right].$
Using the definition of $Z_n$, for any $\epsilon > 0$,
\small{
\begin{align*}
\ee_{\wt \mu_n}\left[  \exp\left( - \frac{\gamma\, (2\rbdd)^d}{\lambda^{\eta d}}Z_n\right)\right] \leq \exp\left( - \frac{\gamma n(n-1)(m_1 - \epsilon)}{2\lambda^{\eta d}} \right) + \pp_{\wt \mu_n}\left(Z_n < \frac{n(n-1)(m_1 - \epsilon)}{2(2\rbdd)^d} \right).
\end{align*}}
By Lemma~\ref{lem:Hoeffding} (with $k = 2$), $\pp_{\wt \mu_n}\left(Z_n < \frac{n(n-1)}{2} \lt(\frac{m_1 - \epsilon}{(2\rbdd)^d}\rt)\right) \leq 2\exp\left( - \frac{(n-1)\epsilon^2}{(2\rbdd)^{2d}}\right)$,
 and thus \eqref{eqn:pnon_up} is established.
 
It remains to prove \eqref{eqn:Kr-term22} and \eqref{eqn:Kr-term2} under the assumption that $\eta d > 1$. For this case, we have that $\lim_{\lambda \nearrow \infty}\theta_{\lambda, \lambda} = 0$ and hence $\lim_{\lambda \nearrow \infty}\bar q_{\lambda, \lambda} = 0$. 
Since $$N_{\lambda, \lambda}  \leq \exp\lt( \lambda\, \bar q_{\lambda, \lambda}\, \Big[\exp\lt(\theta_{\lambda, \lambda} \rt) - 1\Big] \rt),$$
using Taylor's expansion {of the} exponential function,
\[
0 \leq \lim_{\lambda \nearrow \infty}\frac{1}{\lambda^{2 - \eta d}} \log N_{\lambda, \lambda} \leq \lim_{\lambda \nearrow \infty }\lt[\bar q_{\lambda, \lambda} \sum_{j\in \mbb{N}_0} \frac{\gamma^{(j+1)} (2\rbdd)^{(j+1)d}}{(j+1)!} \lambda^{j(\eta d - 1)}\rt] = 0.
\]
Thus \eqref{eqn:Kr-term22} holds. In particular, for the torus  model with $\eta d > 3/2$,
\[
\lambda \, \bar q_{\lambda, \lambda}\, \Big[\exp\lt(\theta_{\lambda, \lambda} \rt) - 1\Big] = \lambda \, \frac{2^d  \theta_{\lambda, \lambda}}{1 - \theta_{\lambda, \lambda} }\, \Big[\exp\lt(\theta_{\lambda, \lambda} \rt) - 1\Big] = \frac{2^d}{1 - \theta_{\lambda, \lambda}}\sum_{j= 2}^\infty \frac{\gamma^j (2\rbdd)^{jd}}{j!} \lambda^{1 - j(\eta d - 1)}
\]
goes to $0$ as $\lambda \nearrow \infty$, and hence \eqref{eqn:Kr-term2} holds.
\end{proof}

\begin{lemma}
 \label{lem:pno-upper}
 Suppose that $1 < \eta d \leq 2$. Then, for any $0 < a < 0.5$,
\begin{align}
\label{eqn:PaccL1}
\pno  \geq \exp\left( - \sum_{j = 1}^\infty \frac{\lambda^{j(1 - \eta d)+1} \lt(1+ \frac{1}{\lambda^{a}}\rt)^{j+1}\gamma^j\, m_j}{j(j+1)}\right)\lt[ 1 - o(1) \rt].
\end{align}
Furthermore, let $\bar{\lambda} = \lceil\lambda(1 - \frac{1}{\lambda^a})\rceil$ for some constant $a$ such that $0 < a < \frac{\eta d - 1}{2}$. Then, for any $\epsilon > 0$,
\begin{align}
\label{eqn:PaccL2}
 \pno \leq N_{\lambda, \lambda} \,\exp\left( - \frac{\gamma \bar{\lambda}^2\, (m_1 - \epsilon)}{2\lambda^{\eta d}} \right) \lt[ 1 + o(1) \rt],
\end{align}
where $N_{\lambda, \lambda}$ satisfies \eqref{eqn:Kr-term22} and \eqref{eqn:Kr-term2}.  In particular, \eqref{eqn:PaccL1} holds with $\epsilon = 0$ if $\eta d > 5/3$ and
$2 - \eta d < a < \frac{\eta d - 1}{2}$.
\end{lemma}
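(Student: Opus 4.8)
The plan is to \emph{de-Poissonize}. Writing $N\sim Poi(\lambda)$ for the (random) number of sphere-centres and observing that, conditionally on $\{N=n\}$, these centres form a homogeneous binomial point process with $n$ points, one has $\pno=\sum_{n\ge0}e^{-\lambda}\tfrac{\lambda^n}{n!}\,\pnon$. The map $n\mapsto\pnon$ is non-increasing (an extra sphere can only create an overlap), and I would combine this monotonicity with the Chernoff bound of Lemma~\ref{lemma:cherpoi} to squeeze $\pno$ between the values of $\pnon$ at two deterministic indices $\lambda(1\pm\lambda^{-a})$, at which the bounds \eqref{eqn:pnon_low} and \eqref{eqn:pnon_up} of Lemma~\ref{lem:binomial} are then invoked. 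The hypothesis $\eta d>1$ enters at once: it makes $\theta_{n,\lambda}=\Theta(\lambda^{1-\eta d})\to0$ whenever $n=\Theta(\lambda)$, so \eqref{eqn:pnon_up} is legitimate for all large $\lambda$.

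For the lower bound \eqref{eqn:PaccL1} I would fix $a\in(0,1/2)$, set $n^\star=\lceil\lambda(1+\lambda^{-a})\rceil$, discard the terms with $n>n^\star$, and use monotonicity to get $\pno\ge\mathcal{P}_{n^\star}(\lambda)\,\pp(N\le n^\star)$. By Lemma~\ref{lemma:cherpoi} with $\epsilon=\lambda^{-a}$, $\pp(N>n^\star)\le e^{-\lambda^{1-2a}/3}=o(1)$, hence $\pp(N\le n^\star)=1-o(1)$. Inserting $n=n^\star$ into \eqref{eqn:pnon_low} and using $n^\star\le\lambda(1+\lambda^{-a})+1$, the $j$-th summand of the exponent gets multiplied by at most $(1+\tfrac1\lambda)^{j+1}$ relative to the summand with $\lambda(1+\lambda^{-a})$; since that series is dominated by its $j=1$ term of size $\Theta(\lambda^{2-\eta d})$, the net change in the exponent is $O(\lambda^{-1}\cdot\lambda^{2-\eta d})=O(\lambda^{1-\eta d})=o(1)$, absorbed in the factor $1-o(1)$. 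This gives \eqref{eqn:PaccL1}.

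For the upper bound \eqref{eqn:PaccL2} I would fix $a$ with $0<a<\tfrac{\eta d-1}{2}$, set $\bar\lambda=\lceil\lambda(1-\lambda^{-a})\rceil$, and split $\pno=\sum_{n<\bar\lambda}+\sum_{n\ge\bar\lambda}$. The first part is at most $\pp(N<\bar\lambda)\le e^{-\lambda^{1-2a}/2}$ (Lemma~\ref{lemma:cherpoi}); the second, by monotonicity, is at most $\mathcal{P}_{\bar\lambda}(\lambda)$. Applying \eqref{eqn:pnon_up} at $n=\bar\lambda$ and noting that $n\mapsto N_{n,\lambda}$ is non-decreasing (it is the $n$-th power of a base $\ge1$ that increases with $n$), so $N_{\bar\lambda,\lambda}\le N_{\lambda,\lambda}$, one obtains $\mathcal{P}_{\bar\lambda}(\lambda)\le N_{\lambda,\lambda}\bigl[\exp(-\tfrac{\gamma\bar\lambda(\bar\lambda-1)(m_1-\epsilon)}{2\lambda^{\eta d}})+2e^{-(\bar\lambda-1)\epsilon^2/(2r)^{2d}}\bigr]$. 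Because $\eta d>1$, $\tfrac{\gamma\bar\lambda}{2\lambda^{\eta d}}=\Theta(\lambda^{1-\eta d})\to0$, so $\bar\lambda(\bar\lambda-1)$ may be replaced by $\bar\lambda^2$ at the cost of a factor $1+o(1)$; and the two error contributions, $e^{-\Theta(\lambda\epsilon^2)}$ (for fixed $\epsilon$, since $2-\eta d<1$) and $e^{-\lambda^{1-2a}/2}$ (since $a<\tfrac{\eta d-1}{2}$ forces $1-2a>2-\eta d$), are each $o\bigl(\exp(-\Theta(\lambda^{2-\eta d}))\bigr)$ and hence absorbable. Collecting terms proves \eqref{eqn:PaccL2}.

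Finally, for the assertion that $\epsilon$ may be taken to be $0$ when $\eta d>5/3$, I would rerun the upper-bound step with $\epsilon=\epsilon_\lambda\to0$ chosen in the window $\lambda^{(1-\eta d)/2}\ll\epsilon_\lambda\ll\lambda^{\eta d-2}$, which is non-empty precisely when $\tfrac{1-\eta d}{2}<\eta d-2$, i.e.\ $\eta d>5/3$: the lower scale keeps $2e^{-\Theta(\lambda\epsilon_\lambda^2)}$ negligible against $\exp(-\Theta(\lambda^{2-\eta d}))$, while the upper scale makes $\exp(\tfrac{\gamma\bar\lambda^2\epsilon_\lambda}{2\lambda^{\eta d}})=1+o(1)$; taking in addition $a>2-\eta d$ (compatible with $a<\tfrac{\eta d-1}{2}$ exactly when $\eta d>5/3$) then lets one replace $\bar\lambda^2$ by $\lambda^2$ up to $1+o(1)$, so that together with $N_{\lambda,\lambda}\to1$ for the torus model (by \eqref{eqn:Kr-term2}) one recovers the sharp constant of Theorem~\ref{lem:non-ovr-rand}. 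The one genuinely delicate point, as this outline makes clear, is the simultaneous calibration of the three competing scales $\lambda^{1-2a}$, $\lambda^{2-\eta d}$ and $\lambda\epsilon^2$ — this is where the thresholds $a<\tfrac{\eta d-1}{2}$ and $\eta d>5/3$ originate — whereas the de-Poissonization and the monotonicity inputs are routine.
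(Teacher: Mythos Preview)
Your proposal is correct and follows essentially the same approach as the paper: de-Poissonize by conditioning on $N$, use the monotonicity of $\pnon$ together with the Chernoff bound (Lemma~\ref{lemma:cherpoi}) to reduce to $\mathcal{P}_n(\lambda)$ at $n\approx\lambda(1\pm\lambda^{-a})$, and then invoke the two bounds of Lemma~\ref{lem:binomial}; your choice $\epsilon_\lambda$ in the window $\lambda^{(1-\eta d)/2}\ll\epsilon_\lambda\ll\lambda^{\eta d-2}$ coincides with the paper's choice $\epsilon=\lambda^{-a}$ for $2-\eta d<a<\tfrac{\eta d-1}{2}$. The only minor addition in your write-up is the explicit monotonicity argument $N_{\bar\lambda,\lambda}\le N_{\lambda,\lambda}$, which the paper uses implicitly.
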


\begin{proof}

\noindent
{\bf Lower Bound:}
Fix $a$ such that $0 < a < 0.5$. Since $\pnon$ is a decreasing function of $n$ for any fixed $\lambda$, by Lemma~\ref{lem:binomial}, we can say that for all $n < \lambda\lt(1 + \frac{1}{\lambda^a}\rt)$,
\[
  \pnon \geq \exp\left( - \sum_{j = 1}^\infty \frac{\lambda^{j(1 - \eta d)+1} \lt(1+ \frac{1}{\lambda^{a}}\rt)^{j+1}\gamma^j\, m_j}{j(j+1)}\right),
\]
and from \eqref{eqn:pno_lam} and the Chernoff bound for the Poisson variable $N$ (see Lemma~\ref{lemma:cherpoi}),
\begin{align*}
\pno  &\geq \ee\lt(\mathcal{P}_N(\lambda) ; N <  \lambda\lt(1  + \frac{1}{\lambda^a}\rt) \rt) \nonumber \\
      &\geq  \pp\lt( N <  \lambda\lt(1 + \frac{1}{\lambda^a}\rt) \rt)\,\exp\left( - \sum_{j = 1}^\infty \frac{\lambda^{j(1 - \eta d)+1} \lt(1+ \frac{1}{\lambda^{a}}\rt)^{j+1}\gamma^j\, m_j}{j(j+1)}\right) \nonumber\\
      &\geq \lt(1 - \exp\lt( - \frac{1}{3}\lambda^{1 - 2a}\rt) \rt)\exp\left( - \sum_{j = 1}^\infty \frac{\lambda^{j(1 - \eta d)+1} \lt(1+ \frac{1}{\lambda^{a}}\rt)^{j+1}\gamma^j\, m_j}{j(j+1)}\right).
\end{align*}\\
Now \eqref{eqn:PaccL1} easily because $\exp\lt( - \frac{1}{3}\lambda^{1 - 2a}\rt) = o(1)$ as a function of $\lambda$.\\

\noindent
{\bf Upper Bound:}
From \eqref{eqn:pno_lam},
\begin{align}
\pno  &= \ee [\mathcal{P}_N(\lambda)]\leq \ee[\mathcal{P}_N(\lambda); N \geq \bar{\lambda}] + \pp(N < \bar{\lambda})
         \leq \mathcal{P}_{\bar{\lambda}}\lt(\lambda\rt) + \pp(N < \bar{\lambda}),\label{eqn:pno-upper-temp}
\end{align}
where the last inequality holds due the fact that $\pnon$ is a decreasing function of $n$ for fixed~$\lambda$.
We now analyze $\mathcal{P}_{\bar{\lambda}}\lt(\lambda\rt)$ and $\pp(N < \bar{\lambda})$ separately.\\

By Lemma~\ref{lem:binomial}, for any $\epsilon > 0$,
\begin{align*}
   \mathcal{P}_{\bar{\lambda}}\lt(\lambda\rt)&\leq N_{\lambda, \lambda}\, \lt[\exp\left( - \frac{\gamma \bar{\lambda}(\bar{\lambda} - 1)\, (m_1 - \epsilon)}{2\lambda^{\eta d}} \right) + 2\exp\left( - \frac{\bar{\lambda}\,\epsilon^2}{(2\rbdd)^{2d}}\right) \rt]\\
                                           &\leq N_{\lambda, \lambda}\,\lt[\exp\left( - \frac{\gamma \bar{\lambda}^2\, (m_1 - \epsilon)}{2\lambda^{\eta d}} \right) \exp\left( \frac{\gamma \, m_1}{2\lambda^{\eta d -1}} \right) + 2\exp\left( - \frac{\bar{\lambda}\,\epsilon^2}{(2\rbdd)^{2d}}\right) \rt],
\end{align*}
where we used the fact that $\bar{\lambda} \leq \lambda$. We rewrite the above expression as follows:
\begin{align*}
   \mathcal{P}_{\bar{\lambda}}\lt(\lambda\rt)\leq N_{\lambda, \lambda}\,\exp\left( - \frac{\gamma \bar{\lambda}^2\, (m_1 - \epsilon)}{2\lambda^{\eta d}} \right)\lt(\exp\left( \frac{\gamma \, m_1}{2\lambda^{\eta d -1}} \right)  + 2\exp\left( \frac{\gamma \bar{\lambda}^2\, m_1}{2\lambda^{\eta d}}- \frac{\bar{\lambda}\,\epsilon^2}{(2\rbdd)^{2d}}\right) \rt).
\end{align*}
Note that $\frac{\gamma \bar{\lambda}^2\, m_1}{2\lambda^{\eta d}} = O\lt( \lambda^{2-\eta d}\rt)$ and $\frac{\bar{\lambda}\,\epsilon^2}{(2\rbdd)^{2d}} = \Omega\lt(\lambda \rt)$.
Since $\eta d > 1$,
\begin{align}
\label{eqn:delta1}
 2\exp\left( \frac{\gamma \bar{\lambda}^2\, m_1}{2\lambda^{\eta d}}- \frac{\bar{\lambda}\,\epsilon^2}{(2\rbdd)^{2d}}\right) \leq 2\exp\left( - \bar{\lambda} \lt( \frac{\epsilon^2}{(2\rbdd)^{2d}} - \frac{\gamma \, m_1}{2\lambda^{\eta d  -1}}\rt)\right)
                                                                                                                        \longrightarrow 0, \text{ as }\,  \lambda \rightarrow \infty,
\end{align}
and since $\lim_{\lambda \rightarrow \infty}\exp\left( \frac{\gamma \, m_1}{2\lambda^{\eta d -1}} \rt)= 1$,
we can say that the first term $\mathcal{P}_{\bar{\lambda}}\lt(\lambda\rt)$ in \eqref{eqn:pno-upper-temp} satisfies the following inequality,
\begin{align*}
   \mathcal{P}_{\bar{\lambda}}\lt(\lambda\rt)\leq N_{\lambda, \lambda}\,\exp\left( - \frac{\gamma \bar{\lambda}^2\, (m_1 - \epsilon)}{2\lambda^{\eta d}} \right) \lt[1  + o(1)\rt].
\end{align*}
 By Lemma~\ref{lemma:cherpoi},
\begin{align}
\label{eqn:poi-temp1}
 \pp\lt(N \leq \bar{\lambda}\rt) \leq \pp\lt(N \leq \lambda\lt(1 - \frac{1}{\lambda^a} \rt)\rt) \leq \exp\lt( - \frac{\lambda^{1 - 2a}}{2}\rt).
\end{align}
Since $2a < 1$ (because $\eta d \leq 2$),  we have $1 - 2a > 2 - \eta d,$ and hence using \eqref{eqn:poi-temp1} and the fact that $N_{\lambda, \lambda} \geq 1$,
\begin{align}
 \label{eqn:delta2}
  \frac{\exp\left(\frac{\gamma \bar{\lambda}^2 (m_1 - \epsilon)}{\lambda^{\eta d}}\right)\pp\lt(N \leq \bar{\lambda}\rt)}{N_{\lambda, \lambda}}
   \leq \exp\left(\frac{ \gamma \bar{\lambda}^2 (m_1 - \epsilon)}{2\lambda^{\eta d}} - \frac{\lambda^{1 - 2a}}{2}\right)\,
   \longrightarrow 0,\, \text{  as  }\, \, \lambda \rightarrow \infty,
 \end{align}
and hence \eqref{eqn:PaccL2} follows from \eqref{eqn:pno-upper-temp} and \eqref{eqn:delta2}. 

In particular if $\eta d > 5/3$, we choose $a$ such that $2 - \eta d < a < \frac{\eta d - 1}{2}$. Let $\epsilon = 1/\lambda^a$.
Then \eqref{eqn:delta1} and \eqref{eqn:delta2} holds. We complete the proof using the fact that $\lim_{\lambda \rightarrow \infty}\exp\left( \frac{\gamma \bar{\lambda}^2\, \epsilon}{2\lambda^{\eta d}} \right) = 1$.
\end{proof}

\begin{proof}[Proof of Theorem~\ref{lem:non-ovr-rand}]
The following upper and lower bounds together complete the proof.\\

\noindent
{\bf Lower Bounds:} Consider the inequality \eqref{eqn:PaccL1}.\\

\noindent
{\bf \underline{Case: $\boldsymbol{\eta d > 2}$.}} Since $R \leq \rbdd$,
we have $m_j \leq (2\rbdd)^{jd}$. Thus, for $\eta d > 2$, all the terms in the exponent of the right-hand side of \eqref{eqn:PaccL1} go to zero asymptotically. In other words, for any $0 < a < 0.5$,
\[
 \lim_{\lambda \rightarrow \infty } \sum_{j = 1}^\infty \frac{\lambda^{j(1 - \eta d)+1} \lt(1+ \frac{1}{\lambda^{a}}\rt)^{j+1}\gamma^j\, m_j}{j(j+1)} = 0.
\]
That means, $ \lim_{\lambda \rightarrow \infty} \pno = 1,\,\, \text{ for }\, \eta d > 2.$\\

\noindent
{\bf \underline{Case: $\boldsymbol{3/2 < \eta d \leq 2}$.}}
Using \eqref{eqn:PaccL1}, $\pno \exp\lt( \frac{\gamma m_1}{2}\lambda^{2 - \eta d} \rt)$ is bounded from below by 
\[
\exp\left(  O\lt(\lambda^{2 - \eta d - a}\rt) - \sum_{j = 2}^\infty \frac{\lambda^{j(1 - \eta d)+1} \lt(1+ \frac{1}{\lambda^{a}}\rt)^{j+1}\gamma^j\, m_j}{j(j+1)}\right)\lt[ 1 - o(1) \rt].
\]
By fixing $a > 2 - \eta d$, we see that the right-hand side of the above expression goes to one as $\lambda \nearrow \infty$. Thus,
$\liminf_{\lambda \rightarrow \infty } \lt[\pno \exp\lt( \frac{\gamma m_1}{2}\lambda^{2 - \eta d} \rt)\rt] \geq 1.$\\

\noindent
{\bf \underline{Case: $\boldsymbol{1 < \eta d \leq 3/2}$.}} By applying $\log$ on both the sides of \eqref{eqn:PaccL1}, we have for any $0 < a < 0.5$ that
\begin{align*}
 \log\pno \geq \log \lt(1 - o(1) \rt) - \sum_{j = 1}^\infty \frac{\lambda^{j(1 - \eta d)+1} \lt(1+ \frac{1}{\lambda^{a}}\rt)^{j+1}\gamma^j\, m_j}{j(j+1)},
\end{align*}
and see that
\begin{align*}
 \frac{1}{\lambda^{2 - \eta d}}\sum_{j = 1}^\infty \frac{\lambda^{j(1 - \eta d)+1} \lt(1+ \frac{1}{\lambda^{a}}\rt)^{j+1}\gamma^j\, m_j}{j(j+1)} = \frac{\lt(1 + \frac{1}{\lambda^\epsilon} \rt)^2 \gamma m_1}{2} + \sum_{j = 2}^\infty \frac{\lt(1+ \frac{1}{\lambda^{a}}\rt)^{j+1}\gamma^j\, m_j}{\lambda^{(j-1)(\eta d -1 )} j(j+1)}.
\end{align*}
Thus $\liminf_{\lambda\rightarrow \infty} \frac{1}{\lambda^{2 - \eta d}} \log\pno \geq - \frac{\gamma m_1}{2}$
for $\eta d > 1$.\\

\noindent
{\bf \underline{Case: $\boldsymbol{0 < \eta d \leq 1}$.}}
Configurations with one sphere or no sphere is always accepted, that is, $\mathcal{P}_1(\lambda) = \mathcal{P}_0(\lambda) = 1$.
The probability of generating no sphere is $e^{- \lambda}$. Consequently, $\pno  > e^{-\lambda}$ and for any $\eta d > 0$,
\begin{align}
\label{eqn:Triv_LB}
 \liminf_{\lambda \rightarrow \infty}\lt[\frac{1}{\lambda}\log \pno \rt] \geq - 1.
\end{align}
In particular, assume that $\eta d = 1$.
For this case, first we show that the limit $\delta := \lim_{\lambda \rightarrow \infty}\lt[\frac{1}{\lambda}\log \pno \rt]$ exists.
To prove this, partition the cube $[0,1]^d$ into a cubic grid of cell-edge length $x^{1/d} \in (0,1)$. Ignore the cells at the boundary that have the edge length strictly smaller than $x^{1/d}$. So, the total intensity of the underlying PPP over a cell is $\lambda x$. 

When $\eta d = 1$, {the radius} of each sphere is identical in distribution to $R/\lambda$. Observe that the non-overlapping probability of the spheres restricting to a cell is $\mathcal{P}(\lambda x)$ (see the definition of the non-overlapping probability). Since the total number of cells is at least $1/x$,
the non-overlapping probability $\pno \leq \lt(\mathcal{P}(\lambda x)\rt)^{\frac{1}{x}},$ and thus
$\frac{1}{\lambda} \log \pno \leq \frac{1}{\lambda x}\log \mathcal{P}(\lambda x).$
We can increase $\lambda$ and decrease the cell-edge length $x^{1/d}$ such that $y :=\lambda x$ is fixed. Then the following inequality holds
\begin{align*}
 \limsup_{\lambda \rightarrow \infty} \lt[ \frac{1}{\lambda} \log \pno \rt] \leq \frac{1}{y}\log \mathcal{P}(y) < 0.
\end{align*}
Now the existence of the required limit is established by applying limit on $y$:
\begin{align*}
 \limsup_{\lambda \rightarrow \infty} \lt[ \frac{1}{\lambda} \log \pno \rt] \leq \liminf_{y \rightarrow \infty} \lt[\frac{1}{y}\log \mathcal{P}(y)\rt].
\end{align*}
To show that $\delta \nearrow 0$ as $\gamma m_1 \searrow 0$, assume that $\gamma m_1  < \epsilon$ for a constant $\epsilon \in (0,1)$.
By \eqref{eqn:PnBi} and \eqref{eqn:bni-ran},
\begin{align*}
\pnon \geq \ee\lt[\prod_{i=1}^{n-1} \lt(1 - \frac{\gamma}{\lambda} \sum_{k=1}^{i}(R_k + R_i)^d \rt)^+ \rt].
\end{align*}
Consider the following partial order on $\reals_+^n$: for any $y,\, y' \in \reals_+^n$, we say that $y \preceq y'$ if $y_i \leq y_i'$ for all $i=1,\dots,n$.
A function $f: \reals_+^n \rightarrow \reals$ is called {\it increasing} (or {\it decreasing}) if $f(y) \leq f(y')$ (or $f(y) \geq f(y')$) for all $y, \, y' \in \reals_+^n$
such that $y \preceq y'$. If $f$ and $g$ are either increasing or decreasing functions then Theorem~2.4 of \cite{Grimm99} (FKG inequality) {can be} trivially extended to show that
$\ee[f(Y)g(Y)] \geq \ee[f(Y)] \ee[g(Y)]$. Clearly the following function $f_i$ is a decreasing function {on $\reals_+^n$:}
\[
f_{i}(y) = \lt(1 - \frac{\gamma}{\lambda} \sum_{k=1}^{i}(y_k + y_i)^d \rt)^+.
\]
Therefore,
\[
\ee\lt[\prod_{i=1}^{n-1} \lt(1 - \frac{\gamma}{\lambda} \sum_{k=1}^{i}(R_k + R_i)^d \rt)^+ \rt] \geq \prod_{i=1}^{n-1} \ee\lt[\lt(1 - \frac{\gamma}{\lambda} \sum_{k=1}^{i}(R_k + R_i)^d \rt)^+ \rt].
\]
Using the convexity of the function $x^+$ and Jensen's inequality, for each $i$,
\begin{align*}
\ee\lt[\lt(1 - \frac{\gamma}{\lambda} \sum_{k=1}^{i}(R_k + R_i)^d \rt)^+ \rt] \geq \lt(1 - i\,\frac{\gamma m_1}{\lambda} \rt)^+,
\end{align*}
and thus $\pnon \geq \prod_{i=1}^{n-1} \lt(1 - i\,\frac{\gamma m_1}{\lambda} \rt)^+.$ With $\underline \lambda = \lfloor \lambda + \lambda^{0.75}\rfloor$ and $N \sim Poi(\lambda)$,
\begin{align*}
\pno = \sum_{n=0}^\infty e^{-\lambda} \frac{\lambda^n}{n!} \pnon \geq \sum_{n=0}^{\underline \lambda} e^{-\lambda} \frac{\lambda^n}{n!} \pnon \geq \mathcal{P}_{\underline \lambda}(\lambda) \pp\lt(N \leq \underline{\lambda} \rt).
\end{align*}
By applying $\log$ on both the sides of the above inequality and scaling with $1/\lambda$,
\[
\frac{1}{\lambda} \log \pno \geq \frac{1}{\lambda} \log \mathcal{P}_{\underline \lambda}(\lambda) + \frac{1}{\lambda} \log \pp\lt(N \leq \underline{\lambda} \rt).
\]
From the definition of $\underline \lambda$ and Lemma~\ref{lemma:cherpoi}, the second term, $\frac{1}{\lambda} \log \pp\lt(N \leq \underline{\lambda} \rt)$, goes to zero as $\lambda \nearrow \infty$. We now focus on the first term, $\frac{1}{\lambda} \log \mathcal{P}_{\underline \lambda}(\lambda)$. Since $\gamma m_1  < \epsilon < 1$, for all $i \leq \underline \lambda$,
\[
\frac{i}{\lambda}\frac{\gamma m_1}{\epsilon} < \frac{\underline \lambda}{\lambda}\frac{\gamma m_1}{\epsilon} \leq \lt(1 + \frac{1}{\lambda^{0.25}}\rt)\frac{\gamma m_1}{\epsilon} \leq 1,
\]
for large values of $\lambda$. Thus, we can write using Bernoulli's inequality that
\begin{align*}
\mathcal{P}_{\underline \lambda}(\lambda) \geq \prod_{i=1}^{\underline \lambda} \lt(1 - i\,\frac{\gamma m_1}{\lambda} \rt) = \prod_{i=1}^{\underline \lambda} \lt(1 - \epsilon \,\frac{i \gamma m_1}{\epsilon \lambda} \rt)
\geq \prod_{i=1}^{\underline \lambda}\lt(1 - \epsilon\rt)^{\,\frac{i \gamma m_1}{\epsilon \lambda}}
\end{align*}
for large values of $\lambda$. Therefore, by combining the trivial bound \eqref{eqn:Triv_LB} and the above conclusions,
\begin{align*}
\delta \geq \max\lt( -1, \frac{\gamma m_1}{2} \lt[ \frac{\log(1 - \epsilon)}{\epsilon}\rt] \rt) \longrightarrow 0\,\, \text{ as }\,\, \gamma m_1 \searrow 0.
\end{align*}

\noindent
{\bf Upper Bounds:} We have a complete proof of the large deviation of $\pno$ for the case $\eta d > 2$.
So, it remains to prove the theorem for $0 < \eta d \leq 2$. We first prove the required upper bounds for the case $1 < \eta d \leq 2$.
If $0 < a < 0.5$ and $\bar{\lambda} = \lceil\lambda(1 - \frac{1}{\lambda^a})\rceil$, then from Lemma~\ref{lem:pno-upper}, for any $\epsilon > 0$,
\begin{align}
\label{eqn:pno-upper1}
 \pno  \leq N_{\lambda, \lambda}\, \exp\left( - \frac{\gamma \bar{\lambda}^2\, (m_1 - \epsilon)}{2\lambda^{\eta d}} \right) \lt[ 1 + o(1) \rt].\\
\end{align}

\noindent
{\bf \underline{Case: $\boldsymbol{ \eta d  > 1}$.}} By applying $\log$ on both the sides of \eqref{eqn:pno-upper1} and then dividing by $\lambda^{2 -\eta d}$, we see that
\[
 \frac{1}{\lambda^{2 - \eta d}}\log \pno \leq - \frac{\gamma \, (m_1 - \epsilon)}{2}\lt(1 + \frac{1}{\lambda^a} \rt)^2 + \frac{1}{\lambda^{2 - \eta d}}\log N_{\lambda, \lambda} + \frac{1}{\lambda^{2 - \eta d}}\log [1 + o(1)].
\]
As a consequence of Lemma~\ref{lem:binomial}, $\limsup_{\lambda \rightarrow \infty} \frac{1}{\lambda^{2 - \eta d}}\log \pno \leq - \frac{\gamma(m_1 - \epsilon)}{2}.$ Now take $\epsilon \searrow 0$.

In particular, consider torus-hard-sphere model with  $5/3 < \eta d  \leq 2$.  We can fix $a$ such that $2 - \eta d < a < \frac{\eta d  - 1}{2}$. From Lemma~\ref{lem:pno-upper},
\eqref{eqn:pno-upper1} holds with $\epsilon = 0$. Therefore,
\[
 \pno \exp\lt( \frac{\gamma m_1}{2}\lambda^{2 - \eta d} \rt) \leq N_{\lambda, \lambda}\,\exp\left(  O\lt(\lambda^{2 - \eta d - a}\rt) \rt) \lt[ 1 + o(1) \rt],
\]
and hence $\limsup_{\lambda \rightarrow \infty}\lt[ \pno \exp\lt(\frac{\gamma m_1}{2}\lambda^{2 - \eta d} \rt)\rt] \leq 1$ from Lemma~\ref{lem:binomial}.\\

\noindent
{\bf \underline{Case: $\boldsymbol{0 < \eta d  < 1}$.}} Let $\underline{\lambda} = \lfloor \lambda^{\frac{1+\eta d}{2}}\rfloor$ and $N \sim Poi(\lambda)$.
From the definition
\begin{align}
\label{eqn:pno_etad_low}
\pno = \ee \lt[ \mathcal{P}_N(\lambda)\rt]  \leq \pp \lt( N \leq \underline{\lambda}\rt) + \ee \lt[ \mathcal{P}_N(\lambda) ; N \geq \underline{\lambda} + 1\rt].
\end{align}
For any $\epsilon > 0$, let $H_n(\epsilon) := \lt\{ \frac{1}{n} \sum_{i=1}^n R_i^d > \epsilon \rt\}$. From \eqref{eqn:bni},
\begin{align*}
\mathcal{P}_{n+1}(\lambda) \leq \ee\lt[\prod_{i=1}^{n} \lt(1 - \frac{\gamma'}{\lambda^{\eta d}} \sum_{j=1}^i R_j^d\rt)^+ \rt] \leq \pp\lt(H^c_n\lt( \frac{\lambda^{\eta d}}{\gamma' n}\rt) \rt) \leq \pp\lt(H^c_n\lt( \frac{\lambda^{\eta d}}{\gamma' \underline{\lambda}}\rt) \rt),
\end{align*}
where the second inequality holds because $\lt(1 - \frac{\gamma'}{\lambda^{\eta d}} \sum_{j=1}^n R_j^d\rt)^+ = 0$ on $H_n\lt( \frac{\lambda^{\eta d}}{\gamma' n}\rt)$.
Since $\eta d < (\eta d +1)/2 < 1$, see that $\frac{\lambda^{\eta d}}{\gamma' \underline{\lambda}} \searrow 0$ as $\lambda \nearrow \infty$, and thus for every $\epsilon > 0$ there exists
$\lambda_\epsilon$ such that
\begin{align*}
\mathcal{P}_{n+1}(\lambda) \leq \pp\lt(H^c_n\lt( \epsilon\rt) \rt),
\end{align*}
for all $\lambda > \lambda_\epsilon$ and $n > \underline{\lambda}$.

Suppose there is a constant $c > 0$ such that $R \geq c$. Then for all sufficiently small values of $\epsilon$,
$\pp\lt(H^c_n\lt(\epsilon\rt) \rt) = 0$ for all $n > \underline{\lambda}$.  Thus for large values of $\lambda$, $\pno \leq \pp \lt( N \leq \underline{\lambda}\rt) \leq e^{-\lambda} \lambda^{\underline{\lambda}}$, and from the definition of $\underline{\lambda}$,
\[
\limsup_{\lambda \rightarrow \infty} \frac{1}{\lambda} \log \pno \leq - 1 + \limsup_{\lambda \rightarrow \infty} \lt[\frac{\underline{\lambda} \log \lambda}{\lambda} \rt] = -1.
\]
So we can assume that $\pp(R < \epsilon)> 0$ for every $\epsilon> 0$. Recall that $\pp(R > 0) = 1$ and  $\Lambda(\theta)$ is the logarithmic moment generating function of $R^d$. As a consequence of positivity of $R$, we see that $\Lambda(\theta) \searrow -\infty$ as $\theta \searrow -\infty$.
Let $\Lambda^*(x) = \sup_{\theta \in \reals} \lt\{\theta x - \Lambda(\theta)\rt\}$. As a consequence of the assumption that $\pp(R < \epsilon)> 0$ for every $\epsilon> 0$,
we can show $\Lambda^*(x) \nearrow \infty$ as $x \searrow 0$. From
Theorem~2.2.3 of \cite{DZ10},
$$\pp\lt(H^c_n\lt( \epsilon\rt) \rt) \leq 2 \exp\lt( - n \inf_{x \leq \epsilon} \Lambda^*(x)\rt) = 2 \exp\lt( - n \Lambda^*(\epsilon)\rt)$$ for all $n > \underline{\lambda}$ and $\epsilon < \ee[R_1^d]$, where the last inequality holds because $\Lambda^*(x)$ is non-decreasing over $0 < x \leq \ee[R_1^d]$. By \eqref{eqn:pno_etad_low},
\begin{align*}
\pno &\leq \pp \lt( N \leq \underline{\lambda}\rt) + \pp \lt(H^c_N\lt(\epsilon\rt); N \geq \underline{\lambda} + 1\rt) \leq \pp \lt( N \leq \underline{\lambda}\rt) + \pp \lt(H^c_N\lt(\epsilon\rt)\rt)\\
&\leq \pp \lt( N \leq \underline{\lambda}\rt) + 2\exp\lt(- \lambda\lt(1 - e^{-\Lambda^*(\epsilon)} \rt) \rt),
\end{align*}
for all $\lambda \geq \lambda_\epsilon$. To conclude that $\limsup_{\lambda \rightarrow \infty} \frac{1}{\lambda} \log \pno \leq -1$, see from the definition of Poisson distribution and $\underline{\lambda}$ that
$$\pp \lt( N \leq \underline{\lambda}\rt) = \sum_{n= 0}^{\underline{\lambda}} e^{-\lambda} \frac{\lambda^n}{n!} \leq \underline{\lambda} e^{-\lambda} \lt(\frac{\lambda^{\underline{\lambda}}}{\underline{\lambda}!}\rt) \leq e^{-\lambda} \lambda^{\underline{\lambda}},$$ where we used the fact that $\lambda^{n-1}/(n-1)! < \lambda^{n}/n!$ for all $n < \lambda$. Hence,
\begin{align*}
\pno &\leq 2\exp\lt(- \lambda\lt(1 - e^{-\Lambda^*(\epsilon)} \rt) \rt) \lt( 1 + \lambda^{\underline{\lambda}} \, \exp\lt(- \lambda\Lambda^*(\epsilon)\rt)\rt)\\
     &= 2\exp\lt(- \lambda\lt(1 - e^{-\Lambda^*(\epsilon)} \rt) \rt) \lt( 1 + \exp\lt(- \lambda\lt(\Lambda^*(\epsilon) - \frac{\underline{\lambda}}{\lambda} \log\lambda\rt)\rt)\rt).
\end{align*}
From the definition of $\underline{\lambda}$, see that $\frac{\underline{\lambda}}{\lambda} \log\lambda \searrow 0$ as $\lambda \nearrow \infty$. As a consequence, as $\lambda \nearrow \infty$, $$\exp\lt(- \lambda\lt(\Lambda^*(\epsilon) - \frac{\underline{\lambda}}{\lambda} \log\lambda\rt)\rt)$$ goes to zero.
Therefore,
\begin{align*}
\limsup_{\lambda \rightarrow \infty} \frac{1}{\lambda} \log \pno \leq -\lt(1 - e^{-\Lambda^*(\epsilon)} \rt).
\end{align*}
We have the required result by taking $\epsilon \searrow 0$.\\

{\bf \underline{Case: $\boldsymbol{\eta d  = 1}$.}}
{It remains} to show that $\delta \leq -\frac{1}{2}\lt(1 - \frac{1}{\gamma' \rbdd^d}\rt)^2$ if $R \equiv \rbdd$ and $\gamma' \rbdd^d > 1$.
Since, from \eqref{eqn:bni}, $\mathcal{P}_{n+1}(\lambda) \leq \prod_{i=1}^{n} \lt(1 - \frac{\gamma'}{\lambda} i \rbdd^d\rt)^+  = 0,$ for all $n > \lambda \frac{\lambda}{\gamma' \rbdd^d},$ we have $\pno \leq \pp \lt( N \leq \lambda \frac{\lambda}{\gamma' \rbdd^d}\rt).$ Now the proof is complete by Lemma~\ref{lemma:cherpoi}.
\end{proof}

\subsection{Proof of Proposition~\ref{prop:Pack_int}}
From \cite{MMSWD01}, the intensity of the torus-hard-sphere model is given by 
\[ 
\rho(\lambda) = \frac{\sum_{n = 1}^\infty n \, \frac{\lambda^n}{n!} \mathcal{P}_{n}(\lambda)}{\sum_{n \in \mbb{N}_0} \frac{\lambda^n}{n!} \mathcal{P}_{n}(\lambda)} = \lambda \frac{\sum_{n \in \mbb{N}_0} \frac{\lambda^n}{n!} \mathcal{P}_{n+1}(\lambda)}{\sum_{n \in \mbb{N}_0} \frac{\lambda^n}{n!} \mathcal{P}_{n}(\lambda)},
\]
where $\mathcal{P}_{n}(\lambda)$ is the non-overlapping probability of $n$ uniformly and independently generated spheres with radius $\rbdd/\lambda^\eta$.\\

\noindent
\underline{\bf Case $\boldsymbol{\eta d >1}$.} In this regime, we show that  $\rho(\lambda)$ is of the order of $\gamma \rbdd^d \lambda^{1 - \eta d}$.
Using inequalities \eqref{eqn:bni} and \eqref{eqn:bni-ran},
\[
\lt(1 - n \gamma \rbdd^d \lambda^{-\eta d}\rt) \mathcal{P}_{n}(\lambda) \geq \mathcal{P}_{n+1}(\lambda) \geq \lt(1 - n \gamma 4\rbdd^d\lambda^{-\eta d}\rt) \mathcal{P}_{n}(\lambda).
\]
Therefore,
\begin{align*}
\rho(\lambda) \geq  \lambda \frac{\sum_{n \in \mbb{N}_0} \frac{\lambda^n}{n!}\lt(1 - n  4\rbdd^d\lambda^{-\eta d}\rt) \mathcal{P}_{n}(\lambda) }{\sum_{n \in \mbb{N}_0} \frac{\lambda^n}{n!} \mathcal{P}_{n}(\lambda)} \geq \lambda - \gamma 4\rbdd^d \lambda^{1 - \eta d} \rho(\lambda),
\end{align*}
and 
\begin{align*}
\rho(\lambda) \leq  \lambda \frac{\sum_{n \in \mbb{N}_0} \frac{\lambda^n}{n!}\lt(1 - n \rbdd^d\lambda^{-\eta d}\rt) \mathcal{P}_{n}(\lambda) }{\sum_{n \in \mbb{N}_0} \frac{\lambda^n}{n!} \mathcal{P}_{n}(\lambda)} \leq \lambda - \gamma \rbdd^d \lambda^{1 - \eta d}  \rho(\lambda).
\end{align*}
Consequently, 
\[
\lt(\frac{1}{ 1 + \gamma 4\rbdd^d \lambda^{1 - \eta d}} \rt) \gamma \rbdd^d \lambda^{1- \eta d} \leq \rho(\lambda)\gamma \rbdd^d \lambda^{- \eta d} \leq \lt(\frac{1}{ 1 + \gamma \rbdd^d \lambda^{1 - \eta d}} \rt) \gamma \rbdd^d \lambda^{1- \eta d},
\]
and thus  $\lim_{\lambda \nearrow \infty} \frac{\mathsf{VF}(\lambda)}{\gamma \rbdd^d \lambda^{1 - \eta d}} = 1$.\\

\noindent
\underline{\bf Case $\boldsymbol{\eta d \leq 1}$.}  We know show that $\lim_{\lambda \nearrow \infty} \mathsf{VF}(\lambda) \leq \rho^{\max} \gamma$ with equality if and only if $\eta d < 1$. Towards this end, we first consider another torus-hard-sphere model on $[0, \lambda^\eta/\rbdd]^d$ with unit radius spheres  and  absolutely continuous  with respect to a $\kappa$-homogeneous Poisson point process for some intensity $\kappa > 0$. Let  $\rho(\kappa, \lambda)$ be the intensity of this new hard-sphere model. We can easily see that when $\kappa = \rbdd^d \lambda^{1-  \eta d}$, the fraction of the volume occupied by the spheres in the new hard-sphere model is also $ \mathsf{VF}(\lambda)$.

The proof of Proposition~1 and 2 of \cite{MMSWD01} can be easily modified to show that  $\rho(\kappa, \lambda)$ is strictly increasing in $\kappa$ for any fixed $\lambda > 0$, and
\[
\lim_{\kappa \to \infty} \rho(\kappa, \lambda) = \rho^{max}, 
\]
where $\rho^{max}$ is the closest packing density.
On the other hand, by fixing $\kappa$,we can further using \cite{MMSWD01} show that the limit $\lim_{\lambda \to \infty} \rho(\kappa, \lambda)$ exists and is equal to the intensity of the {\em stationary} hard-sphere model on $\reals^d$ with unit radius spheres and the reference PPP being $\kappa$-homogeneous. (In fact, the difference between $\rho(\kappa, \lambda)$ and the limit $\lim_{\lambda \to \infty} \rho(\kappa, \lambda)$ is known to be insignificant for large values of $\lambda$; see, for example, \cite{BN12}.) 

From the above discussion, when $\eta d < 1$, for sufficiently small $\epsilon > 0$, there exist constants $\kappa_\epsilon$ and $\lambda_\epsilon$ such that 
$\rho(\kappa, \lambda) > \rho^{max} - \epsilon,$
for all $\lambda > \lambda_\epsilon$ and $\kappa > \kappa_\epsilon$.
If we take $\kappa = \rbdd^d\,\lambda^{1- \eta d}$, since $\eta d< 1$,
\[
 \lim_{\lambda \to \infty} \mathsf{VF}(\lambda) = \lim_{\lambda \to \infty} \lt[\rho(\rbdd^d\lambda^{1- \eta d}, \lambda) \gamma  \rt] = \rho^{max} \gamma,
\] 
which is the maximum packing intensity.

Finally, if $\eta d = 1$ and $\kappa = \rbdd^d$, the limit $\lim_{\lambda \to \infty} \rho(\rbdd^d, \lambda)$ is strictly less than $\rho^{max}$. Hence, we have $\lim_{\lambda \to \infty} \mathsf{VF}(\lambda) < \rho^{max} \gamma$.

\subsection{Proof of Proposition~\ref{prop:AR_meth}}
\label{sec:ComResults}

\newcommand{\pnonn}{\mathcal{P}_{n-1}}
Let $N'$ be the number of spheres generated sequentially, independently and identically before seeing an overlap. {Let $N \sim \pois(\lambda)$ independently of $N'$.}
Then from the construction of Algorithm~\ref{alg:ext1},
\begin{align}
\label{eqn:citr_1}
c\,\ee\lt[\sum_{n=1}^{\min(N, N')} (n-1)\rt]  \leq C_{\mathsf{itr}}(\lambda) \leq  c'\,\lt(\log(\lambda) + \ee\lt[\sum_{n=1}^{\min(N, N')} (n-1) \rt] \rt),
\end{align} 
for some positive constants $c$ and $c'$. In the above expression, $\log(\lambda)$ appears because the cost to generate a sample of $N$ is at most an order of $\log(\lambda)$ (see, e.g., \cite{Dev86}). 
Observe that
\begin{align}
\label{eqn:C_itr}
\ee\lt[\sum_{n=1}^{\min(N, N')} (n-1) \rt] = \ee\lt[\sum_{n=0}^{N -1} n I(N' > n) \rt] = \ee\lt[\sum_{n=1}^{N-1} n \pnon \rt],
\end{align}
where the last equality follows from the fact that $\pp(N' > n) = \pnon$.\\

\noindent
{\bf Upper bound:} For $\eta d \geq 2$, since $\pnon \leq 1$, we can upper bound \eqref{eqn:C_itr} by a constant times $\ee[N^{2}]$, which is further bounded from  above by a constant times $\lambda^2$.
Therefore we just need to consider the case $\eta d < 2$. 
From \eqref{eqn:PnBi}, ${\pnon = {\mbb E}_{\wt \mu_n}\left[\prod_{i=1}^{n}\Big(1 - B_{i}\Big)\right]}$. As a consequence of \eqref{eqn:bni},
$$\displaystyle\pnon \leq \ee\lt[\exp\lt(- \frac{\gamma'}{\lambda^{\eta d}} \sum_{1 \leq j < i \leq n} R_j^d \rt)  \rt] = \ee\lt[\exp\lt(- \frac{\gamma' \rbdd^d}{\lambda^{\eta d}} \sum_{1 \leq j < i \leq n} \frac{R_j^d}{\rbdd^d} \rt)  \rt].$$
Let $\alpha = \ee[R_1^d]$. Since $\rbdd$ is an upper bound on the $R_i'$s,  by Hoeffding's inequality (Lemma~\ref{lem:Hoeffding}) on the sequence $\{R^d_1/\rbdd^d, \dots, R^d_n/\rbdd^d\}$ with $\epsilon = \alpha/2\rbdd^d$, $k = 2$ and $g(x,y) = x$,
\begin{align*}
 \pnon &\leq \exp\lt( - \frac{\gamma'}{ 2\lambda^{\eta d}} \frac{n(n-1)}{2} \alpha\rt)  + \pp\lt( \sum_{1 \leq j < i \leq n} \frac{R_j^d}{\rbdd^d}  \leq  \frac{\alpha}{2\rbdd^d} \rt) \\
     &\leq \exp\lt( - \frac{\gamma' (n-1)^2}{ 4\lambda^{\eta d}}  \alpha\rt)  + \exp\lt(-\frac{n\alpha^2}{4\rbdd^{2d}} \rt).
\end{align*}
Let $a = \sqrt{\frac{2\lambda^{\eta d}}{\gamma' \alpha}}$.   Then from the above expression,
\begin{align}
\label{eqn:upbdd_pn}
 \sum_{n=1}^\infty n\, \pnon \leq  \sum_{n = 1}^\infty n \exp\lt( -\frac{(n-1)^2}{2 a^2} \rt) + \sum_{n=1}^\infty n\exp\lt(- \frac{n\alpha^2}{4\rbdd^{2d}}\rt).
\end{align}
Select $\lambda$ large enough so that $b > 0$. Then with $p = 1 - \exp(-\alpha^2/(4\rbdd^{2d}))$, the second term on the right side of \eqref{eqn:upbdd_pn} is $1/p$ times $\ee\lt[Z\rt]$ for a geometric random variable $Z$ with success probability $p$ and support $\{1,2,3,\dots\}$. Since $\ee\lt[Z\rt] = 1/p$, the term $\sum_{n=1}^\infty n\exp\lt(- n \alpha^2/(4\rbdd^{2d})\rt)$ bounded from above by a constant.

On the other hand, since $n \exp\lt( - (n-1)^2/(2 a^2) \rt) \leq \int_{n - 2}^{n - 1} (x+2) \exp\lt( - \frac{x^2}{2 a^2} \rt)$ for any non-negative integer $n$, we can write that
\begin{align*}
 \sum_{n = 1}^\infty  n \exp\lt( -\frac{(n-1)^2}{2 a^2} \rt) &\leq 1 + \int_{0}^\infty (x+2) \exp\lt( -\frac{x^2}{2 a^2} \rt) \, dx = 1 + a\sqrt{\frac{\pi}{2}}\,\, \ee\lt[\lt(\big|Z\big| +2\rt)\rt],
\end{align*}
where $Z$ is a Gaussian random variable with mean $0$ and variance $a^2$. 
{Since $\ee[|Z|] = a \sqrt{2/\pi}$}, using the definition of $a$, the first term in \eqref{eqn:upbdd_pn} is bounded from above by a constant times $\lambda^{\eta d}$.
Thus, the required upper bound on \eqref{eqn:citr_1} established as a consequence of \eqref{eqn:C_itr} and \eqref{eqn:upbdd_pn}.\\

\noindent
{\bf Lower bound:} Let $\epsilon' = \min(1, \eta d/2)$. Then from \eqref{eqn:C_itr},
$$\displaystyle C_{\mathsf{itr}}(\lambda) \geq \pp\lt(N > \lambda^{\epsilon'}/2\rt) \sum_{n=1}^{ \lfloor\lambda^{\epsilon'}/2 \rfloor + 1} n \pnon \geq c \,\mathcal{P}_{\lfloor\lambda^{\epsilon'}/2 \rfloor}(\lambda)\,\lt(\frac{\lambda^{\epsilon'}}{2}\rt)^{2},$$ for a constant $c > 0$.
From \eqref{eqn:pnon_low},
\[
 \mathcal{P}_{\lfloor\lambda^{\epsilon'}/2 \rfloor}(\lambda) \geq \exp\lt(- \frac{\lambda^{\epsilon'}}{2}  \sum_{j=1}^\infty \lt(\frac{\gamma \lambda^{\epsilon'} }{2\lambda^{\eta d}} \rt)^j \frac{m_j}{j(j+1)}\rt) \geq \exp\lt(- \frac{\lambda^{\epsilon'}}{2}  \sum_{j=1}^\infty \lt(\frac{\gamma \lambda^{\epsilon'} }{2\lambda^{\eta d}} \rt)^j \frac{m_j}{j}\rt),
\]
where $m_j = \ee\lt[(R_1 + R_2)^{j d}\rt]$. Note that $m_j \leq (2\rbdd)^{jd}$ since $R \leq \rbdd$. Therefore,
$\displaystyle\mathcal{P}_{\lfloor\lambda^{\epsilon'}/2 \rfloor}(\lambda) \geq  \exp\lt(- \frac{\lambda^{\epsilon'}}{2}  \sum_{j=1}^\infty \frac{1 }{j} \lt( \frac{\gamma (2\rbdd^d) }{2\lambda^{\eta d - \epsilon'}} \rt)^j \rt)$.
Using Taylor's expansion of $\log(1 - x)$ for $0< x <1$, and the fact that $\frac{\gamma (2\rbdd^d) }{2\lambda^{\eta d - \epsilon'}} < 1 $ for sufficiently large values of $\lambda$,
$$\displaystyle\mathcal{P}_{\lfloor\lambda^{\epsilon'}/2 \rfloor}(\lambda) \geq \exp\lt(\frac{\lambda^{\epsilon'}}{2} \log\lt(1 - \frac{\gamma (2\rbdd^d) }{2\lambda^{\eta d - \epsilon'}} \rt) \rt) = \lt(1 - \frac{\gamma (2\rbdd^d) }{2\lambda^{\eta d - \epsilon'}} \rt)^{\frac{\lambda^{\epsilon'}}{2}}.$$
From the definition of $\epsilon'$,
\[
 \lim_{\lambda \rightarrow \infty} \lt[ \lt(1 - \frac{\gamma (2\rbdd^d) }{2\lambda^{\eta d - \epsilon'}} \rt)^{\frac{\lambda^{\epsilon'}}{2}}\rt] = \begin{cases}
                                                                                                                                              1, & \quad \text{ if }\, \eta d > 2,\\
                                                                                                                                              \exp\lt( - \gamma (2\rbdd)^d/4 \rt), & \quad \text{ if }\, 0 < \eta d \leq 2.
                                                                                                                                             \end{cases}
\]
{In addition, from  Lemma~\ref{lemma:cherpoi}, ${\lim_{\lambda \rightarrow \infty}\pp\lt(N > \lambda^{\epsilon'}/2\rt) = 1}$. Therefore,
there exists a constant $c$ such that ${C_{\mathsf{itr}}(\lambda) \geq c\, \lambda^{2\epsilon'}}$. }
The proof of Proposition~\ref{prop:AR_meth} is complete using Theorem~\ref{lem:non-ovr-rand} and~\eqref{eqn:TAR}.

\subsection{Proof of Proposition~\ref{prop:imp_samp}}
\label{sec:proof_imp_samp}
First note that the sphere volume is at most a constant time the cell volume for all $\lambda$. Thus, after generating a sphere, the complexity of relabelling cells around the center of the new sphere plus the complexity of overlap check is a constant. For $\eta d > 1$, {the number of} spheres generated in {an iteration} of
Algorithm~\ref{alg:ext11} is stochastically dominated by a Poisson random variable with rate~$\lambda$. 
Therefore, there exists a constant $c$ such that $\wt C_{\mathsf{itr}}(\lambda) \leq c\, \lambda$.
On the other hand, if $0 < \eta d \leq 1$, the expected number of spheres generated per iteration is of order $\lambda^{\eta d}$ because the expected volume
of each sphere is an order of $1/\lambda^{\eta d}$. It is clear that there exists a constant $c > 0 $ such that
$\wt C_{\mathsf{itr}}(\lambda) \leq c\, \lambda^{\eta d } $.
Thus, by \eqref{eqn:ISAR_cost} and Proposition~\ref{prop:AR_meth},
\begin{align*}
 \mathcal{T}_{\mathsf{ISAR}} &\leq c \,  \frac{\lambda^{\min(1, \eta d)}}{P_{\mathsf{acc}} (\lambda)} = c\, \ee[\wt \sigma(N)] \frac{\lambda^{\min(1, \eta d)}}{\pno}.
\end{align*}
Thus, \eqref{eqn:ISAR_cost_grid} holds, because $\wt \sigma(n) = \delta_n$ for each $n \in \mbb{N}_0$. Furthermore, from the definition of $\wt \sigma(\cdot)$ and~$N$,
 \begin{align*}
  \ee\lt[\delta_N\rt] \leq  \ee\lt[ \exp\lt(- \sum_{i=0}^{N}(i-1) \gamma' \frac{\rbdd^d}{\lambda^{\eta d}} \rt) \rt] = \ee\lt[ \exp\lt(- \gamma' \frac{\rbdd^d}{2\lambda^{\eta d}}  \frac{(N-1)N}{2} \rt) \rt].
 \end{align*}
{By the Chernoff bound} (Lemma~\ref{lemma:cherpoi}), for any $0 < \epsilon < 1$,
\begin{align*}
 \ee\lt[ \exp\lt(- \gamma' \frac{\rbdd^d}{\lambda^{\eta d}} \frac{(N-1)N}{2} \rt) \rt] &\leq  \ee\lt[ \exp\lt(- \gamma' \frac{\rbdd^d}{\lambda^{\eta d}} \frac{(N-1)N}{2} \rt); N > \lambda\lt(1 - \epsilon\rt) \rt]\\
                                                       &\hspace{3cm} + \pp\lt( N \leq \lambda\lt(1 - \epsilon\rt)\rt)\\
                                                       &\leq  \exp\lt(- \frac{\gamma' \rbdd^d}{2}  \lt(1 - \epsilon \rt) \lambda^{2 - \eta d}\rt) + \exp\lt( -\frac{\lambda \epsilon^2}{2}\rt).
\end{align*}
If $\eta d > 1$, then the second term on the right-hand side of the above expression decreases faster than the first term, and thus the claim holds true.
For $\eta d = 1$, take $\epsilon = 1/2$, then we have the required result with $b = \min\lt(1/8, \gamma \rbdd^d/4\rt)$.
Furthermore, if $0 < \eta d < 1$ then the first term decreases faster than the second one, and  hence the proof is completed by taking $b = 1/2$.

\subsection{Proof of Theorem~\ref{thm:loss-system}}
To derive the lower bound on $\comDCone$, we view the entire dominating process $\mD$
as a Poisson Boolean model on a higher dimensional space and use an extension of the FKG inequality \cite{MR96}
(alternatively, see Theorem~2.2 in \cite{MR96}). 
Let $s_0 = 0$ and $s_i$ be the instant of the $i^{th}$ arrival in the dominating process after time zero.
Let $C(\lfs, \lfs^u, \lfs^l)$ be the running time complexity of updating the dominating, upper bound and lower bound processes
at the instant of an arrival when their respective states are $\lfs, \lfs^u$ and~$\lfs^l$.

Since $U_0(0) = D(0)$ and $L_0(0) = \varnothing$, on $\bigcap_{j=0}^i \{D(s_j) \notin \mathscr{A}\}$, for all $t \leq s_i,$
\begin{align}
\label{eqn:LowerNothing}
 L_0(t) = \varnothing\,\, \text{ and } \,\, U_0(t) = D(t).
\end{align}
Thus,  $L_0(t) \neq U_0(t)$ for all $t \leq s_i$
on $\bigcap_{j=0}^i \{D(s_j) \notin \mathscr{A}\}$.
Now take $$\tau = \inf\lt\{i \in \mbb{N}_0: D(s_i) \in \mathscr{A}\rt\}.$$ From the above conclusion, it  is clear that $N^f \geq \tau$.
Then,
\begin{align*}
 \mathcal{T}_{\mathsf{DC1}} &\geq \ee\lt[\sum_{i=0}^{N^f} C\Big(D(s_i), U_0(s_i), L_0(s_i)\Big) \rt]\\
 & \geq \ee\lt[\sum_{i=0}^{\tau } C\Big(D(s_i), U_0(s_i), L_0(s_i)\Big) \rt]\nonumber\\
                  &= \sum_{i=0}^\infty \ee\lt[ C\Big(D(s_i), U_0(s_i), L_0(s_i)\Big) ; \tau \geq i \rt]\nonumber\\
                  &= \sum_{i=0}^\infty \ee\lt[ C\Big(D(s_i), U_0(s_i), L_0(s_i)\Big) ; \bigcap_{j=0}^{i -1} \lt\{D(s_j) \notin \mathscr{A}\rt\}\rt]\nonumber\\
                  &= \sum_{i=0}^\infty \ee\lt[ C\Big(D(s_i), D(s_i), \varnothing\Big) ; \bigcap_{j=0}^{i -1} \lt\{D(s_j) \notin \mathscr{A}\rt\}\rt],\nonumber
\end{align*}
where $I\lt(\cap_{j=0}^{-1} \lt\{D(s_j) \notin \mathscr{A}\rt\} \rt) = 1$ and the last equality follows from \eqref{eqn:LowerNothing}.

Suppose that $\mathscr{D}$ is the state space of the entire process $\lt\{ D(t) : t \in \reals \rt\}$. Then we can define a simple partial order on $\mathscr{D}$ as follows:
For any $\omega, \omega' \in \mathscr{D}$, we say $\omega \preceq \omega'$ if and only if every sphere present in $\omega$ is also present in $\omega'$, that is, either $\omega' = \omega$ or $\omega'$ is obtained by
adding spheres to $\omega$. Define the following notion of increasing functions: A real valued function on $\mathscr{D}$ is increasing if $f(\omega) \leq f(\omega')$ for all
$\omega, \omega' \in \mathscr{D}$ such that $\omega \preceq \omega'$.

At each arrival,  if there  are $n$ points in  the upper bounding process, the cost to decide whether to accept the new point is at least the the cost to check overlap condition in the upper bounding process and that cost is an order of $n$. Therefore, $C\Big(D(s_i), U_0(s_i), \varnothing\Big) = c |U_0(s_i)|$ for some constant. Since $|U_0(s_i)|$ is a non-decreasing function on $\mathscr{D}$ as per the partial order stated above, by FKG inequality (Theorem~2.2 in \cite{MR96}), 
\[
 \ee\lt[ C\Big(D(s_i), D(s_i), \varnothing\Big) ; \bigcap_{j=0}^{i-1} \lt\{D(s_j) \notin \mathscr{A}\rt\} \rt]
\]
is bounded from below by
\[
 \ee\lt[ C\Big(D(s_i), D(s_i), \varnothing\Big)\rt] \prod_{j=0}^{i-1} \pp\lt( D(s_j) \notin \mathscr{A} \rt).
\]
Thus,
\begin{align*}
 \mathcal{T}_{\mathsf{DC1}} \geq  \frac{\ee\lt[ C\Big(D(s_1), D(s_1), \varnothing\Big)\rt]}{1 - \pp\lt( D(s_1) \notin \mathscr{A} \rt)} = \frac{\ee_{\mu^0}\lt[ |\state| \rt]}{\pp_{\mu^0} \lt( \state \in \mathscr{A} \rt)} = c\,\frac{\lambda}{\pno},
\end{align*}
for some constant $c > 0$. Then \eqref{eqn:TDC_vs_TAR} follows from \eqref{eqn:TAR} and \eqref{eqn:TAR-vs-Pno}. The proof is completed using Theorem~\ref{lem:non-ovr-rand}.

\end{appendices}

\bibliographystyle{abbrv}
\bibliography{Ref}

\begin{thebibliography}{10}

\bibitem{Adams74}
D.~J. Adams.
\newblock Chemical potential of hard-sphere fluids by {M}onte {C}arlo methods.
\newblock {\em Molecular Physics}, 28(5):1241--1252, 1974.

\bibitem{AS13}
J.~Amor{\'o}s and S.~Ravi.
\newblock On the application of the {C}arnahan-{S}tarling method for hard
  hyperspheres in several dimensions.
\newblock {\em Physics Letters A}, 377(34–36):2089 -- 2092, 2013.

\bibitem{AG07}
S.~Asmussen and P.~W. Glynn.
\newblock {\em Stochastic Simulation: Algorithms and Analysis}, volume~57 of
  {\em Stochastic Modelling and Applied Probability}.
\newblock Springer, New York, 2007.

\bibitem{Aurenhammer87}
F.~Aurenhammer.
\newblock Power diagrams: Properties, algorithms and applications.
\newblock {\em SIAM Journal on Computing}, 16(1):78--96, 1987.

\bibitem{BN12}
A.~Baddeley and G.~Nair.
\newblock Fast approximation of the intensity of {G}ibbs point processes.
\newblock {\em Electronic Journal of Statistics}, 6:1155--1169, 2012.

\bibitem{BT16}
V.~Baranau and U.~Tallarek.
\newblock Chemical potential and entropy in monodisperse and polydisperse
  hard-sphere fluids using {W}idom’s particle insertion method and a pore
  size distribution-based insertion probability.
\newblock {\em The Journal of Chemical Physics}, 144(21), 2016.

\bibitem{BM06}
K.~K. Berthelsen and J.~Møller.
\newblock Bayesian analysis of {M}arkov point processes.
\newblock In {\em Case studies in spatial point process modeling}, volume 185
  of {\em Lecture Notes in Statistics}, pages 85--97. Springer, New York, 2006.

\bibitem{BKM08}
K.~K. Berthelsen and J.~Møller.
\newblock Non-parametric {B}ayesian inference for inhomogeneous markov point
  processes.
\newblock {\em Australian \& New Zealand Journal of Statistics},
  50(3):257--272, 2008.

\bibitem{Boute92}
R.~T. Boute.
\newblock The {E}uclidean definition of the functions div and mod.
\newblock {\em ACM Transactions on Programming Languages Systems},
  14(2):127--144, 1992.

\bibitem{DZ10}
A.~Dembo and O.~Zeitouni.
\newblock {\em Large Deviations Techniques and Applications}, volume~38 of {\em
  Stochastic Modelling and Applied Probability}.
\newblock Springer-Verlag, Berlin, 2010.
\newblock Corrected reprint of the second (1998) edition.

\bibitem{Dev86}
L.~Devroye.
\newblock {\em Nonuniform Random Variate Generation}.
\newblock Springer-Verlag, New York, 1986.

\bibitem{FFG02}
P.~A. Ferrari, R.~Fern\'{a}ndez, and N.~L. Garcia.
\newblock Perfect simulation for interacting point processes, loss networks and
  {I}sing models.
\newblock {\em Stochastic Processes and their Applications}, 102(1):63--88,
  2002.

\bibitem{Fill98}
J.~A. Fill.
\newblock An interruptible algorithm for perfect sampling via {M}arkov chains.
\newblock {\em Annals of Applied Probability}, 8(1):131--162, 02 1998.

\bibitem{FMMR00}
J.~A. Fill, M.~Machida, D.~J. Murdoch, and J.~S. Rosenthal.
\newblock Extension of {F}ill's perfect rejection sampling algorithm to general
  chains.
\newblock {\em Random Structures and Algorithms}, 17(3-4):290--316, 2000.

\bibitem{FJMM15}
S.~Foss, S.~Juneja, M.~Mandjes, and S.~Moka.
\newblock Spatial loss systems: Exact simulation and rare event behavior.
\newblock {\em ACM SIGMETRICS Performance Evaluation Review}, 43(2):3--6, 2015.

\bibitem{GNL00}
N.~L. Garcia.
\newblock Perfect simulation of spatial processes.
\newblock {\em Resenhas}, 4(3):283--325, 2000.

\bibitem{Grimm99}
G.~Grimmett.
\newblock {\em Percolation}, volume 321 of {\em Grundlehren der Mathematischen
  Wissenschaften [Fundamental Principles of Mathematical Sciences]}.
\newblock Springer-Verlag, Berlin, second edition, 1999.

\bibitem{Haenggi12}
M.~Haenggi.
\newblock {\em Stochastic Geometry for Wireless Networks}.
\newblock Cambridge University Press, 2012.

\bibitem{HVM99}
O.~H{\"a}ggstr{\"o}m, M.-C.~N. Van~Lieshout, and J.~M{\o}ller.
\newblock Characterization results and {M}arkov chain {M}onte {C}arlo
  algorithms including exact simulation for some spatial point processes.
\newblock {\em Bernoulli}, 5(4):641--658, 08 1999.

\bibitem{HMTK20}
C.~Hirsch, S.~B. Moka, T.~Taimre, and D.~P. Kroese.
\newblock Rare events in random geometric graphs, 2020.

\bibitem{Hoe63}
W.~Hoeffding.
\newblock Probability inequalities for sums of bounded random variables.
\newblock {\em Journal of the American Statistical Association},
  58(301):13--30, 1963.

\bibitem{MH12}
M.~L. Huber.
\newblock Spatial birth-death swap chains.
\newblock {\em Bernoulli}, 18(3):1031--1041, 08 2012.

\bibitem{MH16}
M.~L. Huber.
\newblock {\em Perfect Simulation}.
\newblock Chapman \& Hall/CRC Monographs on Statistics \& Applied Probability.
  CRC Press, 2016.

\bibitem{JM13}
S.~Juneja and M.~Mandjes.
\newblock Overlap problems on the circle.
\newblock {\em Advances in Applied Probability.}, 45(3):773--790, 09 2013.

\bibitem{KW98}
W.~S. Kendall.
\newblock Perfect simulation for the area-interaction point process.
\newblock In {\em Probability towards 2000 ({N}ew {Y}ork, 1995)}, volume 128 of
  {\em Lect. Notes Stat.}, pages 218--234. Springer, New York, 1998.

\bibitem{Kendall15}
W.~S. Kendall.
\newblock Introduction to coupling-from-the-past using {$R$}.
\newblock In {\em Stochastic geometry, spatial statistics and random fields},
  volume 2120 of {\em Lecture Notes in Mathematics}, pages 405--439. Springer,
  Cham, 2015.

\bibitem{KM00}
W.~S. Kendall and J.~Møller.
\newblock Perfect simulation using dominating processes on ordered spaces, with
  application to locally stable point processes.
\newblock {\em Advances in Applied Probability}, 32(3):pp. 844--865, 2000.

\bibitem{LM66}
E.~Lieb and D.~Mattis.
\newblock {\em Mathematical Physics in One Dimension: Exactly Soluble Models of
  Interacting Particles}.
\newblock Perspectives in physics. Academic Press, 1966.

\bibitem{MMSWD01}
S.~Mase, J.~M{\o}ller, D.~Stoyan, R.~P. Waagepetersen, and G.~D{\"o}ge.
\newblock Packing densities and simulated tempering for hard core gibbs point
  processes.
\newblock {\em Annals of the Institute of Statistical Mathematics},
  53(4):661--680, Dec 2001.

\bibitem{MM40}
J.~Mayer and M.~Mayer.
\newblock {\em Statistical Mechanics}.
\newblock J. Wiley, 1940.

\bibitem{MR96}
R.~Meester and R.~Roy.
\newblock {\em Continuum Percolation}, volume 119 of {\em Cambridge Tracts in
  Mathematics}.
\newblock Cambridge University Press, Cambridge, 1996.

\bibitem{MH08}
J.~M{\o}ller and K.~Helisov\'{a}.
\newblock Power diagrams and interaction processes for unions of discs.
\newblock {\em Advances in Applied Probability}, 40(2):321--347, 2008.

\bibitem{MPB06}
J.~M{\o}ller, A.~N. Pettitt, R.~Reeves, and K.~K. Berthelsen.
\newblock An efficient {M}arkov {C}hain {M}onte {C}arlo method for
  distributions with intractable normalising constants.
\newblock {\em Biometrika}, 93(2):451--458, 2006.

\bibitem{JMR10}
J.~Møller, M.~L. Huber, and R.~L. Wolpert.
\newblock Perfect simulation and moment properties for the matérn type {III}
  process.
\newblock {\em Stochastic Processes and their Applications}, 120(11):2142 --
  2158, 2010.

\bibitem{Pathria72}
R.~Pathria.
\newblock {\em Statistical Mechanics}.
\newblock Elsevier Science, 1972.

\bibitem{PW96}
J.~G. Propp and D.~B. Wilson.
\newblock Exact sampling with coupled {M}arkov {C}hains and applications to
  statistical mechanics.
\newblock {\em Random Structures and Algorithms}, 9(1-2):223--252, 1996.

\bibitem{SRJ53}
Z.~W. Salsburg, R.~W. Zwanzig, and J.~G. Kirkwood.
\newblock Molecular distribution functions in a one-dimensional fluid.
\newblock {\em The Journal of Chemical Physics}, 21(6):1098--1107, 1953.

\bibitem{SVS00}
B.~Senger, J.-C. Voegel, and P.~Schaaf.
\newblock Irreversible adsorption of colloidal particles on solid substrates.
\newblock {\em Colloids and Surfaces A: Physicochemical and Engineering
  Aspects}, 165(1–3):255 -- 285, 2000.

\bibitem{SKM87}
D.~Stoyan, W.~S. Kendall, and J.~Mecke.
\newblock {\em Stochastic Geometry and Its Applications}.
\newblock Wiley Series in Probability and Mathematical Statistics: Applied
  Probability and Statistics. John Wiley \& Sons, Ltd., Chichester, 1987.
\newblock With a foreword by D. G. Kendall.

\bibitem{TST90}
G.~Tarjus, P.~Schaaf, and J.~Talbot.
\newblock Generalized random sequential adsorption.
\newblock {\em The Journal of Chemical Physics}, 93(11):8352--8360, 1990.

\bibitem{Thon99}
E.~Thönnes.
\newblock Perfect simulation of some point processes for the impatient user.
\newblock {\em Advances in Applied Probability}, 31(1):69--87, 03 1999.

\end{thebibliography}
\end{document}